\newcommand{\revise}[1]{{\color{Bittersweet}   #1}}					
\newcommand{\temp}[1]{{\color{Gray} {\footnotesize  #1 }}}			
\definecolor{link}{RGB}{11,0,128}
\newaliascnt{numberingbase}{subsection}
\numberwithin{equation}{numberingbase}
\newtheoremstyle{thms}{0.7em}{0pt}{\itshape}{}{\bfseries}{.}{ }{}
\theoremstyle{thms}
\newtheorem*{theorem*}{Theorem}
\newtheorem{theorem} [numberingbase] {Theorem}
\newtheorem{thm}[numberingbase]{Theorem}
\newtheorem{lemma}      [numberingbase]{Lemma}
\newtheorem{corollary}  [numberingbase]{Corollary}
\newtheorem{proposition}[numberingbase]{Proposition}
\newtheorem{question}   [numberingbase]{Question}
\newtheorem{prop}		[numberingbase]{Proposition}
\newtheorem{cor}		[numberingbase]{Corollary}
\newtheorem{variant}[numberingbase]{Variant}
\Crefname{thm}{Theorem}{Theorems}
\newtheoremstyle{defs}{0.7em}{0pt}{}{}{\bfseries}{.}{ }{}
\theoremstyle{defs}
\newtheorem{remark}		[numberingbase] {Remark}
\newtheorem{rem}		[numberingbase]{Remark}
\newtheorem{defn} [numberingbase]{Definition}
\newtheorem{example}    [numberingbase]{Example}
\newtheorem*{rems}{Remarks}
\Crefname{variant}{Variant}{Variants}
\newtheorem{eg}[numberingbase]{Example}
\newtheorem*{egs}{Examples}
\newtheorem{claim}               {Claim}
\newcommand{\leftexp}[2]{{\vphantom{#2}}^{#1}%
      \kern-\scriptspace%
      {#2}}
\newcommand{\gL}{\lambda}
\newcommand{\bA}{\mathbb{A}}
\newcommand{\bC}{\mathbb{C}}
\newcommand{\bF}{\mathbb{F}}
\newcommand{\bP}{\mathbb{P}}
\newcommand{\bQ}{\mathbb{Q}}
\newcommand{\bZ}{\mathbb{Z}}
\newcommand{\fc}{\mathfrak{c}}
\newcommand{\fm}{\mathfrak{m}}
\newcommand{\fH}{\mathfrak{H}}
\newcommand{\fX}{\mathfrak{X}}
\newcommand{\sH}{\mathscr{H}}
\newcommand{\sO}{\mathscr{O}}
\newcommand{\sR}{\mathscr{R}}
\newcommand{\sU}{\mathscr{U}}
\newcommand{\sX}{\mathscr{X}}
\newcommand{\cE}{\mathcal{E}}
\newcommand{\cJ}{\mathcal{J}}
\newcommand{\cO}{\mathcal{O}}
\newcommand{\cW}{\mathcal{W}}
\DeclareMathOperator{\Char}{char}		
\DeclareMathOperator{\Coker}{Coker}	
\DeclareMathOperator{\Frac}{Frac}		
\DeclareMathOperator{\Ker}{Ker}		
\DeclareMathOperator{\Lie}{Lie}		
\DeclareMathOperator{\Pic}{Pic}		
\DeclareSymbolFont{cyrletters}{OT2}{wncyr}{m}{n}
\DeclareMathSymbol{\Sha}{\mathalpha}{cyrletters}{"58}	
\DeclareMathOperator{\Spec}{Spec}		
\DeclareMathOperator{\Spf}{Spf}		
\newcommand{\ce}{\colonequals}
\newcommand{\cont}{{\mathrm{cont}}}		
\newcommand{\eps}{\eps}
\newcommand{\hra}{\hookrightarrow}
\renewcommand{\i}{^{-1}}
\newcommand{\isomto}{\overset{\sim}{\longrightarrow}}
\newcommand{\llb}{\llbracket}			
\newcommand{\llp}{(\!(}			
\newcommand{\ov}{\overline}
\providecommand{\ppp}[1]{\left(#1\right)}
\newcommand{\ra}{\rightarrow}
\newcommand{\Ra}{\Rightarrow}
\newcommand{\reg}{\mathrm{reg}}		
\newcommand{\rrb}{\rrbracket}			
\newcommand{\rrp}{)\!)}			
\newcommand{\sh}{\mathrm{sh}}		
\newcommand{\sm}{\mathrm{sm}}			
\newcommand{\surjects}{\twoheadrightarrow}
\newcommand{\tensor}{\otimes} 			
\newcommand{\wh}{\widehat}
\newcommand{\wt}{\widetilde}
\newcommand{\xra}{\xrightarrow}
\providecommand{\up}[1]{{\upshape(}#1{\upshape)}}
\providecommand{\uref}[1]{{\upshape\ref{#1}}}
\providecommand{\uS}{{\upshape\S}}
\newcommand{\cusps}{\mathrm{cusps}}
\renewcommand{\b}{\textbf}
\providecommand{\ucolon}{{\upshape:} }
\providecommand{\uscolon}{{\upshape;} }
\newcommand{\brems}{\begin{rems} \hfill \begin{enumerate}[label=\b{\thenumberingbase.},ref=\thenumberingbase]}
\newcommand{\erems}{\end{enumerate} \end{rems}}
\newcommand{\begs}{\begin{egs} \hfill \begin{enumerate}[label=\b{\thenumberingbase.},ref=\thenumberingbase]}
\newcommand{\eegs}{\end{enumerate} \end{egs}}
\newcommand{\m}{\item}
\newcommand{\bsm}{\begin{smallmatrix}}
\newcommand{\esm}{\end{smallmatrix}}
\newcommand{\blem}{\begin{lemma}}
\newcommand{\elem}{\end{lemma}}
\newcommand{\bconj}{\begin{conj}}
\newcommand{\econj}{\end{conj}}
\newcommand{\bprob}{\begin{Problem}}
\newcommand{\eprob}{\end{Problem}}
\newcommand{\bq}{\begin{Q}}
\newcommand{\eq}{\end{Q}}
\newcommand{\benum}{\begin{enumerate}[label={{\upshape(\alph*)}}]}
\newcommand{\benuma}{\begin{enumerate}[label={{\upshape(\arabic*)}}]}
\newcommand{\benumr}{\begin{enumerate}[label={{\upshape(\roman*)}}]}
\newcommand{\eenum}{\end{enumerate}}
\newcommand{\bitem}{\begin{itemize}}
\newcommand{\eitem}{\end{itemize}}
\newcommand{\bc}{}
\newcommand{\bd}{\begin{defn}}
\newcommand{\ed}{\end{defn}}
\newcommand{\beg}{\begin{eg}}
\newcommand{\eeg}{\end{eg}}
\newcommand{\bcl}{\begin{claim}}
\newcommand{\ecl}{\end{claim}}
\newcommand{\lab}{\label}
\newcommand{\msk}{\medskip}
\newcommand{\x}{\text}
\newcommand{\rv}{\revise}
\newcommand{\q}{\quad}
\providecommand{\qxq}[1]{\quad\text{#1}\quad}
\providecommand{\qx}[1]{\quad\text{#1}}
\providecommand{\xq}[1]{\text{#1}\quad}
\newcommand{\qq}{\quad\quad}
\newcommand{\qqq}{\quad\quad\quad}
\newcommand{\qqqq}{\quad\quad\quad\quad}
\newcommand{\qqqqqq}{\quad\quad\quad\quad\quad\quad}
\newcommand{\qqqqqqqq}{\quad\quad\quad\quad\quad\quad\quad\quad}
\newcommand{\ba}{\begin{aligned}}
\newcommand{\ea}{\end{aligned}}
\newcommand{\be}{\begin{equation}}
\newcommand{\ee}{\end{equation}}
\newcommand{\bpf}{\begin{proof}}
\newcommand{\epf}{\end{proof}}
\newcommand{\bthm}{\begin{thm}}
\newcommand{\ethm}{\end{thm}}
\newcommand{\bprop}{\begin{prop}}
\newcommand{\eprop}{\end{prop}}
\newcommand{\bcor}{\begin{cor}}
\newcommand{\ecor}{\end{cor}}
\newcommand{\brem}{\begin{rem}}
\newcommand{\erem}{\end{rem}}
\newcommand*{\QED}{\hfill\ensuremath{\qed}}
\newtheoremstyle{subsection-tweak}
   {11pt}
   {3pt}%
   {}
   {}%
   {\bfseries}
   {}%
   {.5em}
   {\thmnumber{\@{#1}{}\@{#2}.}%
    \thmnote{~{\bfseries#3.}}}
\theoremstyle{subsection-tweak}
\newtheorem{pp}[numberingbase]{}
\newcommand{\bpp}{\begin{pp}}
\newcommand{\epp}{\end{pp}}
\newtheorem{ppt}[equation]{}
\newcommand{\bppt}{\begin{ppt}}
\newcommand{\eppt}{\end{ppt}}
\newcommand{\A}{{\mathbb A}}
\renewcommand{\a}{{\mathfrak a}}
\newcommand{\F}{{\mathcal F}}
\newcommand{\Q}{{\mathbb Q}}
\newcommand{\Z}{{\mathbb Z}}
\newcommand{\R}{{\mathbb R}}
\newcommand{\C}{{\mathbb C}}
\newcommand{\bs}{\backslash}
\newcommand{\OF}{{\mathcal O_F}}
\newcommand{\GL}{{\rm GL}}
\newcommand{\PGL}{{\rm PGL}}
\newcommand{\SL}{{\rm SL}}
\newcommand{\St}{{\rm St}}
\newcommand{\FF}{\mathbb{F}_{F}}
\newcommand{\val}{{\rm val}}
\newcommand{\Gal}{{\rm Gal}}
\newcommand{\Ind}{{\rm Ind}}
\newcommand{\Hom}{{\rm Hom}}
\newcommand{\sabcd}[4]{\left(\begin{smallmatrix}#1&#2\\#3& #4\end{smallmatrix}\right)}
\newcommand{\abcd}[4]{\left(\begin{smallmatrix}#1&#2\\#3& #4\end{smallmatrix}\right)}
\newcommand{\qF}{q_F}
\newcommand{\Aut}{{\rm Aut}}
	\newcommand{\michalis}[1]{{\color{OliveGreen} \sf \it{ {\scriptsize ``#1''} }}}
\newcommand{\tst}{\textstyle}
\def\vol{\operatorname{vol}}
\def\eps{\varepsilon}
\newcommand{\Cx}{\mathbb{C}^{\times}}
\newcommand{\Fp}{\mathbb{F}_{p}}
\newcommand{\Oix}{\mathcal{O}_F^{\times}}
\newcommand{\Oi}{\mathcal{O}_F}
\newcommand{\Xc}{\mathfrak{X}}
\newcommand{\Ga}{\mathfrak{G}}
\newcommand{\Fx}{F^{\times}}
\newcommand{\Wh}{\mathcal{W}}
\newcommand{\dxy}{d^{\times}y}
\newcommand{\im}{\operatorname{Im}}
\newcommand{\andy}{\quad{\rm and}\quad}
\newcommand{\abs}[1]{\lvert{#1}\rvert}
\newcommand{\absF}[1]{\lvert{#1}\rvert_F}
 \def\l@section{\@tocline{1}{0pt}{1pc}{}{}}
\renewcommand{\tocsection}[3]{%
  \indentlabel{\@ifnotempty{#2}{\makebox[1.3em][l]{%
    \ignorespaces#1 \bfseries{#2}.\hfill}}}\bfseries{#3}
    \vspace{1.5pt}}
\renewcommand{\tocsubsection}[3]{%
  \indentlabel{\@ifnotempty{#2}{\hspace*{-0.5em}\makebox[2.1em][l]{%
    \ignorespaces#1#2.\hfill}}}#3
    \vspace{1.5pt}}
\newcommand\appendix@section[1]{%
  \refstepcounter{section}%
  \orig@section*{Appendix \@Alph\c@section. #1}%
}
\let\orig@section\section
\g@addto@macro\appendix{\let\section\appendix@section}
\begin{document}

\bibliographystyle{plain}

\title{The Manin constant and the modular degree}

\author{K\k{e}stutis \v{C}esnavi\v{c}ius}
\address{CNRS, UMR 8628, Laboratoire de Math\'{e}matiques d'Orsay, Univ.~Paris-Sud, Universit\'{e} Paris-Saclay, 91405 Orsay, France}
\email{kestutis@math.u-psud.fr}

\author{Michael Neururer}
\address{Fachbereich Mathematik, Technische Universit\"{a}t Darmstadt, Schlo\ss gartenstr.~7, 64289 Darmstadt, Germany.}
\email{neururer@mathematik.tu-darmstadt.de}

\author{Abhishek Saha}
\address{School of Mathematical Sciences\\
  Queen Mary University of London\\
  London E1 4NS\\
  UK}
  \email{abhishek.saha@qmul.ac.uk}

\date{\today}

\subjclass[2010]{Primary 11G05; Secondary 11F11, 11F30, 11F70, 11F85, 11G18, 11L05.}
\keywords{Admissible representation, cusp, elliptic curve, $\eps$-factor, Fourier coefficient, Gauss sum, Manin constant, modular degree, modular parametrization, newform, rational singularity, Whittaker model}

\maketitle

\begin{abstract}
The Manin constant $c$ of an elliptic curve $E$ over $\bQ$ is the nonzero integer that scales the differential $\omega_f$
determined by the normalized newform $f$ associated to $E$ into the pullback of a N\'{e}ron differential under a minimal 
parametrization $\phi\colon X_0(N)_\bQ \surjects E$. Manin conjectured that $c = \pm 1$ for optimal parametrizations, and we prove that in general $c \mid \deg(\phi)$  under a minor assumption at $2$ and $3$ that is not needed for cube-free $N$ or for parametrizations by $X_1(N)_\bQ$. Since $c$ is supported at the additive reduction primes, which need not divide
 $\deg(\phi)$, this improves the status of the Manin conjecture for many $E$. Our core result that gives this divisibility is the containment $\omega_f \in H^0(X_0(N), \Omega)$, which we establish by combining automorphic methods with techniques from arithmetic geometry; here the modular curve $X_0(N)$ is considered over $\bZ$ and $\Omega$ is its relative dualizing sheaf over $\bZ$. We reduce this containment  to  $p$-adic bounds on denominators 
of the Fourier expansions of $f$ at \emph{all}
the cusps of $X_0(N)_\bC$ and then use the recent basic identity for 
the $p$-adic Whittaker newform to establish stronger bounds in the more general setup of newforms of weight $k$ on $X_0(N)$. 
To overcome obstacles at $2$ and $3$, we 
analyze nondihedral supercuspidal representations of $\GL_2(\bQ_2)$ and exhibit new cases in which $X_0(N)_\bZ$ has rational singularities.
\end{abstract}

\hypersetup{
    linktoc=page,     
}
\renewcommand*\contentsname{}
\quad\\
\tableofcontents

\section{Introduction}
By the Shimura--Taniyama conjecture settled in 
\cite{Wil95}, 
\cite{TW95}, and 
\cite{BCDT00}, for every elliptic curve $E$ over $\bQ$ of conductor $N$ and every subgroup $\Gamma_1(N) \subset \Gamma \subset \Gamma_0(N)$ of $\GL_2(\wh{\bZ})$, there is~a
\[
\xq{surjection} \phi\colon (X_\Gamma)_\bQ \surjects E \qxq{from the modular curve} (X_\Gamma)_\bQ.
\]
Most commonly, $\Gamma$ is $\Gamma_0(N)$ or $\Gamma_1(N)$, so that $X_\Gamma$ is $X_0(N)$ or $X_1(N)$, but for different $\Gamma$ different $E$ may be more canonical within the same isogeny class: for instance, $X_1(11)_\bQ$ and $X_0(11)_\bQ$ are distinct isogenous elliptic curves. The multiplicity one theorem ensures that the $\phi$-pullback of a N\'{e}ron differential $\omega_E$ is a nonzero multiple of the differential $\omega_{f} \in H^0((X_\Gamma)_\bQ, \Omega^1)$ associated to the normalized newform $f$ whose Hecke eigenvalues agree with the Frobenius traces of $E$:
\[
\phi^*(\omega_E) = c_\phi \cdot \omega_f \qxq{for a unique} c_\phi \in \bQ^\times,
\]
and one knows that
\footnote{It seems that the integrality of $c_\phi$ 
was first noticed by Gabber during his PhD studies. To establish it, one reduces to the case $\Gamma = \Gamma_1(N)$ and then uses $q$-expansions, see \Cref{const-in-Z} and its proof.} $c_\phi \in \bZ$ (we abuse notation: $\omega_E$ is nonunique, so $\phi$ determines only $\pm c_\phi$). 
For fixed $\Gamma$ and $E$ there are many $\phi$, 
so it is common to normalize 
 $\phi$ to be \emph{optimal}, that is, $\deg (\phi)$ to be the least possible as $E$ varies in its isogeny class and $\Gamma$ is fixed (any $\phi$ factors through an optimal one, see the proof of \Cref{const-in-Z} and use multiplicity one). 
For optimal $\phi$, Manin conjectured~that
\[
c_\phi \overset{?}{=} \pm 1,
\]
see \cite{Man71}*{Section 10.3}.\footnote{Manin considered $\Gamma = \Gamma_0(N)$, and this implies the general case by \Cref{const-in-Z}.  In \cite{Ste89}, Stevens argued that minimal degree parametrizations by $X_1(N)_\bQ$ are the most natural ones, and he conjectured that $c_\phi = \pm 1$ for them.} From the theoretical point of view, the natural approach to the Manin conjecture is to argue that $p \nmid c_\phi$ for every prime $p$: geometrically, this $p$-adic statement translates to studying the arithmetic properties of the ``reduction modulo $p$'' of the parametrization $\phi$. This is not so in the computational approach, where for explicit $E$ one computes with modular symbols to check ``directly'' that $c_\phi = \pm 1$: indeed, Cremona used the computational approach to prove in \cite{Cre19} that the Manin conjecture holds whenever $N \le 500000$. The divergence of the two approaches gives this overwhelming computational evidence for the Manin conjecture even more weight. 

The initial theoretical results on the Manin conjecture were based on exactness properties of N\'{e}ron models and showed that $p \nmid c_\phi$ for those $p > 2$ at which $E$ has semistable reduction, see \cite{Maz78} (and \cite{AU96}, \cite{ARS06} for some sharpenings). By passing to a minimal extension $K$ of $\bQ_p$ over which $E$ acquires semistable reduction and analyzing a stable integral model of $X_0(N)_{\ov{\bQ}_p}$, Edixhoven was able to extend this approach to some primes $p$ at which $E$ has additive reduction: in \cite{Edi91}*{Theorem~3}, he showed that $p \nmid c_\phi$ for any prime $p \ge 11$ at which $E$ does not have an additive potentially ordinary reduction of Kodaira type II, III, or IV.\footnote{In the unfinished manuscript \cite{Edi01}, he attempted to remove this assumption on Kodaira types (still for $p \ge 11$).} In these geometric approaches, the key input to the required exactness properties is Raynaud's result from \cite{Ray74a} on uniqueness of commutative, finite, flat group schemes with a fixed generic fiber over a complete discrete valuation ring of mixed charcateristic $(0, p)$ and absolute ramification index $e < p - 1$. Raynaud's results were later subsumed into integral $p$-adic Hodge theory but the requirement $e < p - 1$ for exactness properties persisted, so there seems to be little hope that this approach is the ``right'' one for the Manin conjecture.

The conclusion $p \nmid c_\phi$ was established for all primes $p$ of semistable reduction for $E$ by a different method in \cite{Manin-semistable}. The key novelty was to analyze the Hecke module structure of the Lie algebra of the N\'{e}ron model of $J_0(N)$ using a multiplicity one result in characteristic $p$, and this showed that automorphic rather than purely algebro-geometric techniques that were tried previously may be better suited for the Manin conjecture. The latter is most interesting in the remaining case of a prime $p$ of additive reduction for $E$, since then the relevant  arithmetic geometry is the most~delicate. 

In this article, we combine automorphic methods with those of arithmetic geometry to settle a subconjecture of the Manin conjecture, reviewed as \eqref{wf-str-int-ann} below. We then show that this subconjecture has the following divisibility consequences for the Manin constant.

\bthm[\Cref{X1N-main-pf}] \label{main-X1n}
For an elliptic curve $E$ over $\bQ$ of conductor $N$, every surjection
\[
\phi \colon X_1(N)_\bQ \surjects E \qxq{satisfies} c_\phi \mid \deg(\phi).
\]
\ethm


\bthm[\Cref{thm:main-result-proof}] \label{main-general}
For an elliptic curve $E$ over $\bQ$ of conductor $N$, and for a level $\Gamma$  with $\Gamma_1(N) \subset \Gamma \subset \Gamma_0(N)$, every surjection
\[
\phi \colon (X_\Gamma)_\bQ \surjects E \qxq{satisfies} c_\phi \mid 6\cdot \deg(\phi), 
\]
and if $N$ is cube-free \up{that is, if $8 \nmid N$ and $27 \nmid N$}, then even
\[
c_\phi \mid \deg(\phi).
\]
More precisely, under these assumptions, for every prime $p$ we have
\[
\val_p(c_\phi) \le \val_p(\deg(\phi)) + \begin{cases}
1 &\x{if $p = 2$ with $\val_2(N) \ge 3$ and there is no $p'\mid N$ with $p' \equiv 3 \bmod 4$,} \\
1 &\x{if $p = 3$ with $\val_3(N) \ge 3$ and there is no $p'\mid N$ with $p' \equiv 2 \bmod 3$,} \\
0 &\x{otherwise,}
\end{cases}
\]
and, more generally, if for some $\Gamma \subseteq \Gamma' \subseteq \Gamma_0(N)$ the singularities of $(X_{\Gamma'})_{\bZ_{(p)}}$ are rational, then
\[
\val_p(c_\phi) \le \val_p(\deg(\phi)).
\]
\ethm

The modular degree $\deg(\phi)$ is often even, for instance, if $\Gamma = \Gamma_0(N)$ and $\phi$ factors through some Atkin--Lehner quotient, but otherwise it is somewhat mysterious. In particular, for many $E$ this degree is coprime with $N$, to the effect that the new upper bound $\val_p(c_\phi) \le \val_p(\deg(\phi))$ supplied by \Cref{main-X1n,main-general} eliminates\footnote{The bounds in \Cref{main-X1n,main-general} hold for any parametrization $\phi$, although it is only for optimal $\phi$ that the Manin constant $c_\phi$ is conjectured to equal $\pm 1$ (and known to be divisible only by the primes of additive reduction). For example, when $E$ equals the elliptic curve with Cremona label 11a3, which is a model of $X_1(11)_{\Q}$, and $\phi \colon X_0(N)_\bQ \surjects E$ is the isogeny of least degree, one has $c_\phi = \deg(\phi)=5$, which is consistent with our bounds.} some additive primes that could divide $c_\phi$ for optimal $\phi$.

To illustrate, in the following figure we plotted in green the fraction of those isogeny classes of $E$ over $\bQ$ of conductor $N \le 300000$ that have an odd additive prime $p$ but for which no such $p$ divides $\deg \phi$, where $\phi$ is the optimal parametrization by $X_0(N)_\bQ$; if $p=3$ with $\val_3(N) \ge 3$, then we also require that there exist a $p'\mid N$ with $p' \equiv  2 \bmod 3$. \Cref{main-general} shows that the Manin constant for such $E$  is a power of $2$ (the semistable primes are eliminated by earlier results, as reviewed above). Furthermore, we plotted in yellow the fraction of those isogeny classes as above for which some odd $p$ of additive reduction does not divide $\deg \phi$ and some other does, with the same caveat for $p = 3$, so that \Cref{main-general} eliminates at least one odd additive prime.
Even though in all of these small conductor cases the full Manin conjecture is known by Cremona's verification \cite{Cre19}, the figure shows the scope of the improvement supplied by \Cref{main-X1n,main-general}.

{\begin{center} \includegraphics[scale=.75]{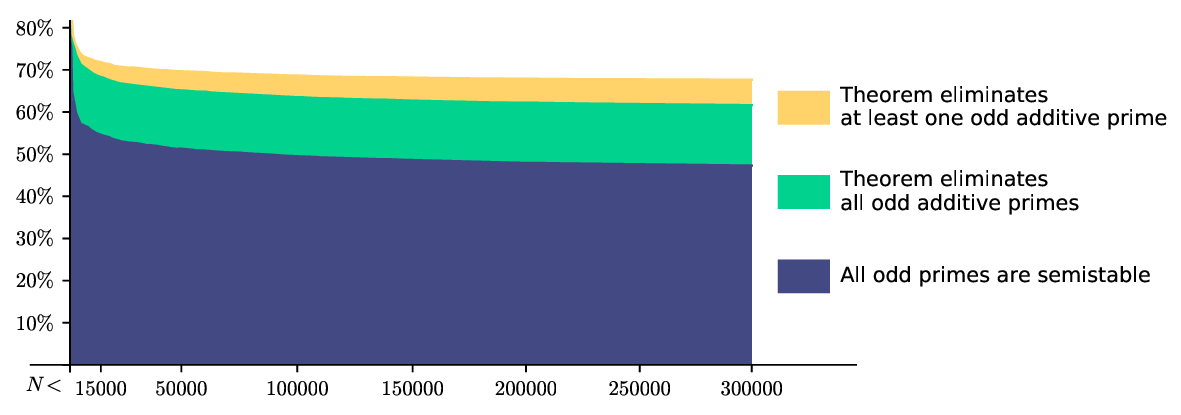}
	\end{center}}

The key input to \Cref{main-X1n,main-general} and the core result of this article is the following  integrality property of $\omega_f$ that follows from the Manin conjecture. Namely, we argue in \Cref{wf-integral} that
\be \label{wf-str-int-ann} \tag{$\star$}
\omega_f \qxq{lies in the $\bZ$-lattice} H^0(X_0(N), \Omega) \subset H^0(X_0(N)_\bQ, \Omega^1),
\ee
where the modular curve $X_0(N)$ is over $\bZ$ and $\Omega$ is its relative dualizing sheaf over $\bZ$. In addition to being implied by the Manin conjecture, the containment \eqref{wf-str-int-ann} is actually \emph{necessary} for attacking it: except for unforeseen radically new approaches, all indications point to \eqref{wf-str-int-ann} being used in future work on the remaining cases of the Manin conjecture. 

The containment \eqref{wf-str-int-ann} is straight-forward in the semistable case, that is, for squarefree $N$, thanks to $q$-expansions and the Atkin--Lehner involution. More generally, since the formal completion of $X_0(N)$ along $\infty$ is $\Spf(\bZ\llb q \rrb)$,  the weaker containment $\omega_f \in H^0(X_0(N)^\infty, \Omega^1)$ amounts to the integrality of the Fourier expansion of $f$ at  $\infty$, where $X_0(N)^\infty \subset X_0(N)$ is the ($\bZ$-smooth) open complement of those $\bZ$-fibral irreducible components 
that do not meet the $\bZ$-point given by the cusp $\infty$. Similarly, \eqref{wf-str-int-ann} amounts to certain bounds on the $p$-adic valuations of the denominators of the Fourier coefficients of $f$ at \emph{all} the cusps of $X_0(N)_\bC$---at least up to difficulties caused by the lack of a modular interpretation of the coarse space $X_0(N)$ that we overcome in \S\ref{canonical-lattice} by exploiting the Deligne--Mumford stack $\sX_0(N)$ 
and its ``relative dualizing'' sheaf $\Omega$. We compute the precise required bounds in \Cref{explicit-crit}, and an important step for this is to compute the differents of the extensions of discrete valuation rings obtained by localizing the finite flat cover $\sX_0(N) \ra \sX(1)$ at the generic points of the $\bF_p$-fiber of $\sX_0(N)$, which we do in \Cref{compute-dy}.

To show that the required bounds are met, we use automorphic methods to establish the following stronger bounds. In \Cref{exmp:Fourier expansions} we show that these bounds are \emph{sharp} in the case of newforms associated to elliptic curves (and $p\le 11$) and we discuss their computational potential. 

\begin{theorem}[\Cref{main-estimates} and \Cref{indep-of-c}]\label{t:fourierintro}
For a prime $p$, a  cuspidal, normalized newform $f$ of weight $k$ on $\Gamma_0(N)$,  an isomorphism $\bC \simeq \ov{\bQ}_p$, the resulting $\val_p\colon \bC \ra \bQ \cup \{ \infty\}$ with $\val_p(p) = 1$, and a cusp $\fc \in X_0(N)(\bC)$ of denominator $L$ \up{see \uS\uref{pp:cusps}}, the Fourier coefficients $a_f(r;\fc)$ satisfy
\[
\val_p(a_f(r;\fc)) \geq\tst  - \frac k2 \val_p\ppp{\frac{N}{\gcd(L^2,\, N)}} + \begin{cases}
0  &\text{if }~\val_p(\gcd(L, \frac NL)) = 0, \\
0  &\text{if }~\val_p(\gcd(L, \frac NL)) = 1,~\val_p(N) >2, \\
- \frac12 & \text{if }~\val_p(L) = \frac{1}{2}\val_p(N) = 1,  \\
1 -   \frac 12 \val_p(\gcd(L, \frac NL)) \!\!\! & \text{otherwise,}
\end{cases}
\]
as well as the following stronger bounds in the case $p = 2$\ucolon
\[
\tst \val_2(a_f(r;\fc))\geq \tst- \frac k2 \val_2\ppp{\frac{N}{\gcd(L^2,\, N)}} + \begin{cases}
 0 & \text{if }~\val_2(L) = \frac{1}2\val_2(N) =1,\\
\frac{k}2 & \text{if }~\val_2(L) = \frac{1}2\val_2(N) \in \{2, 3, 4\}, \\ \frac{k}2 + 1 - \frac{1}{4}\val_2(N)\! &\text{if }~\val_2(L) = \frac{1}2\val_2(N) > 4, \\
 0  & \text{if }~\val_2(\gcd(L, \frac NL)) = 3,~\val_2(N) > 6.
\end{cases}
\]
Moreover, $\min_r(\val_p(a_f(r;\fc)))$ only depends on $f$ and $L$, and not on the cusp $\fc$ with denominator~$L$.
\end{theorem}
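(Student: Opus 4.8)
The plan is to pass from the classical newform $f$ on $\Gamma_0(N)$ to the corresponding automorphic representation $\pi = \otimes_v \pi_v$ of $\GL_2(\mathbb{A}_\bQ)$ and to read off the Fourier coefficients $a_f(r;\fc)$ at a cusp of denominator $L$ from the value of a suitably translated Whittaker newform at $p$. Concretely, a cusp of denominator $L$ is represented by a matrix $g_L \in \GL_2(\bQ_p)$ whose entries encode $L$ and $N/L$, and the Fourier expansion of $f$ at that cusp is governed by $W_{\pi_p}(g_L\,\mathrm{diag}(\varpi^n,1))$ for the local newvector $W_{\pi_p}$ in the Whittaker model of $\pi_p$. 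Thus the bounds on $\val_p(a_f(r;\fc))$ reduce to $p$-adic bounds on $W_{\pi_p}$ evaluated at these explicit translates, together with bookkeeping of the normalizing factor $L^{-k/2}$ (which produces the $-\frac{k}{2}\val_p(N/\gcd(L^2,N))$ term) and of the automorphy factor. I would carry out this translation carefully, using the recent ``basic identity'' for the $p$-adic Whittaker newform cited in the introduction, which expresses $W_{\pi_p}$ on such double cosets in terms of $\epsilon$-factors and conductors.

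First I would set up, for each local component $\pi_p$, the list of possibilities organized by $\val_p(N) = a(\pi_p)$: unramified principal series ($a = 0$), Steinberg or its unramified twist ($a = 1$), ramified principal series, twists of Steinberg by a ramified character, and supercuspidal representations ($a \ge 2$), further split according to whether the conductor is even or odd and, for $p=2,3$, according to the finer structure of the inducing data. For each case I would compute (or quote from the basic identity) the value $\val_p\big(W_{\pi_p}(g_L\,\mathrm{diag}(\varpi^n,1))\big)$ as a function of $\val_p(L)$, uniformly in $n$, since the theorem only asserts a lower bound valid for all $r$. The key quantitative inputs are: the conductor of $\pi_p$ and of its twists; the $\epsilon$-factor $\epsilon(1/2,\pi_p)$, whose $p$-adic valuation enters through the functional equation relating $W_{\pi_p}$ near the two ``ends''; and, crucially, the behavior at the ``middle'' cusp $\val_p(L) = \frac12\val_p(N)$, where the two ends meet and the Whittaker function is most constrained — this is exactly the row $\val_p(L) = \frac12\val_p(N) = 1$ in the statement, and the rows with the finer $p=2$ information.

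I expect the main obstacle to be the supercuspidal case at $p = 2$, and to a lesser extent at $p = 3$. For $p \ge 5$ every supercuspidal $\pi_p$ of $\GL_2$ is dihedral, induced from a character of a quadratic extension, and the conductor exponent, the $\epsilon$-factor, and the Whittaker values are all controlled by classical ramification computations (conductor-discriminant formula, Gauss sums), giving the clean bounds in the first displayed case distinction. At $p = 2$ one must handle the exceptional (non-dihedral, i.e.\ induced from ramified quadratic extensions in the wildly ramified regime) supercuspidals and the very ramified principal series, where conductors can be as large as $\val_2(N) \in \{4,5,6,\dots\}$ and the naive estimates degrade; this is the source of the second, sharper $p=2$ table with its regimes $\val_2(L) = \frac12\val_2(N) \in \{2,3,4\}$ versus $>4$, and the special entry $\val_2(\gcd(L,N/L)) = 3$, $\val_2(N) > 6$. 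Here I would invoke the local Langlands correspondence (as the introduction signals) to describe $\pi_2$ via its Weil–Deligne parameter, compute the relevant Artin conductors and $\epsilon$-constants explicitly, and feed these into the basic identity; the delicate point is that the bound must be uniform over \emph{all} such $\pi_2$ of a given conductor, so one needs the worst case, and checking that the worst case is no worse than stated is where the real work lies.

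Finally, the last sentence — that $\min_r \val_p(a_f(r;\fc))$ depends only on $f$ and $L$, not on the particular cusp $\fc$ of denominator $L$ — I would obtain as a byproduct: two cusps of the same denominator $L$ differ by right translation by an element of a compact open subgroup that (after the reductions above) acts on the relevant Whittaker values by a unit, or permutes them, so the minimum of $\val_p$ over the Fourier coefficients is unchanged. Alternatively one can deduce it a~priori from the fact that such cusps lie in one orbit under the diamond operators / Atkin–Lehran-type symmetries that act on $H^0(\sX_0(N),\Omega)$ integrally; I would state it in the Whittaker language to keep the argument self-contained, as \Cref{indep-of-c} presumably does.
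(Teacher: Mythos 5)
Your plan is, in outline, exactly the paper's: express the $p$-part of $a_f(r;\fc)$ through values of the local Whittaker newform of $\pi_{f,p}$ at explicit matrices indexed by $\val_p(L)$ and $\val_p(r)$ (the paper's Lemma~\ref{prop:localgloballink}), expand those values via the basic identity into local coefficients governed by $\eps$-factors, bound these by Gauss-sum estimates for $p\ge 5$ (dihedral supercuspidals) and by a local Langlands analysis of the nondihedral supercuspidals at $p=2$, and deduce the independence statement from the fact that cusps of the same denominator only change the relevant Whittaker values by unit translates. So the route is the same; but there is one genuine missing idea in how you propose to execute it.

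You say the needed local bounds should be ``uniform over all $\pi_p$ of a given conductor,'' with the worst case checked via conductors and $\eps$-factors. That cannot work as stated for the ramified principal series components (the paper's Types 4 and 5, $\pi_{f,p}\simeq \mu\abs{\,\cdot\,}_{\bQ_p}^{\sigma}\boxplus\mu^{-1}\abs{\,\cdot\,}_{\bQ_p}^{-\sigma}$): their Whittaker values involve powers of $p^{\sigma}$, and under an arbitrary isomorphism $\bC\simeq\ov{\bQ}_p$ the quantity $\val_p(p^{\sigma})$ is not bounded in terms of the conductor alone, so no purely local worst-case estimate uniform in the representation exists. This is also visible in the statement itself: several entries (e.g.\ the $\frac k2$ and $\frac k2+1-\frac14\val_2(N)$ rows at $p=2$) depend on the weight $k$ beyond the width normalization, which a conductor-only local bound could never produce. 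The paper supplies the missing global input in Lemma~\ref{l:classi}: using the Ramanujan--Petersson bounds at finite places and the integrality of the Fourier coefficients of a suitable character twist $\wt f$ of $f$, it shows $\sigma\in i\bR$ and $p^{\pm\sigma+\frac{k-1}2}\in\ov{\bZ}$, hence $\abs{\val_p(p^{\sigma})}\le\frac{k-1}2$, and only then do the local estimates (Theorems~\ref{T1} and~\ref{T2}) close the argument. Relatedly, the sharper $p=2$ rows are not obtained from worst-case valuations of individual $\eps$-factors but require exploiting cancellation among the local Fourier coefficients $c_{t,\ell}(\chi)$; your plan leaves this to ``the real work,'' which is fair, but the Type 4/5 issue above is a concrete ingredient your outline omits and would need to add.
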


To argue the above bounds we pass to the automorphic side by expressing the  ``$p$-part'' of $a_f(r;\fc)$ in terms 
of the local Whittaker newform $W_{f,\, p}$ of the irreducible, admissible representation $\pi_{f,\, p}$ of $\GL_2(\bQ_p)$ determined by $f$ (see \Cref{prop:localgloballink} and its proof). 
Thus, \Cref{t:fourierintro} hinges on the $p$-adic analysis of the values of $W_{f,\, p}$, which is a purely local question about $\pi_{f,\, p}$. To access these values, we use the local Fourier expansion of $W_{f,\, p}$ and analyze the resulting local Fourier coefficients $c_{t,\,\ell}(\chi)$ with the help of the recent ``basic identity'' (reviewed in \S\ref{pp:basic-identity}) that was derived by the third-named author in \cite{Sah16} from the $\GL_2$ local functional equation of Jacquet--Langlands \cite{JL70}.

The coefficients $c_{t,\,\ell}(\chi) \in \bC$ are indexed by characters $\chi\colon \bZ_p^\times \ra \bC^\times$ (the relevant $t$ and $\ell$ are determined by $N$, $L$, and $r$), and reasonably explicit formulas for the $c_{t,\,\ell}(\chi)$ were worked out in special cases in \cite{Sah16} and appeared in general in the recent work of Assing \cite{Ass19}. These formulas involve the Jacquet--Langlands $\GL_2$ local $\eps$-factors, 
which for $p \ne 2$ can be expressed in terms of the $\GL_1$ local $\eps$-factors of Tate, equivalently, in terms of Gauss sums of characters of $F^\times$ for at most quadratic extensions $F/\Q_p$. In effect, $p$-adically bounding the values of $W_{f,\, p}$, which is a problem on $\GL_2$,
reduces to $p$-adically bounding Gauss sums of characters, which is an approachable problem on $\GL_1$. We study the latter in \S\ref{s:prelim-local} and then bound the values of $W_{f,\, p}$ in the key \Cref{T1,T2}. Their most delicate case $p = 2$ uses a classification of nondihedral supercuspidal  representations of $\GL_2(\bQ_2)$ derived via the local Langlands correspondence (see \Cref{lem:Type1b}) and, to go beyond the na\"ive bounds, takes into account cancellations between the $c_{t,\,\ell}(\chi)$. Thanks in part to this additional attention to $p = 2$, we obtain the integrality result \eqref{wf-str-int-ann} without any exceptions.


In a more restrictive setting and by a different method, bounds on $p$-adic valuations of Fourier expansions were investigated by Edixhoven in \S3 of his unfinished manuscript \cite{Edi01}. There 
he also hoped for a more conceptual approach that would be based on studying the Kirillov model of $\pi_{f,\,p}$, and the work of our \S\S\ref{s:prelim-local}--\ref{s:global p-adic valuations} realizes this prediction (we use the Whittaker model instead). 

The automorphic approach to \eqref{wf-str-int-ann} seems much sharper and more natural than those based on arithmetic geometry alone. For instance, as explained in Conrad's \cite{BDP17}*{Appendix~B}, one may use intersection theory on the regular stacky arithmetic surface $\sX_0(N)$ to bound the denominator of $\omega_f$ with respect to the lattice 
\[
H^0(X_0(N), \Omega) \cong H^0(\sX_0(N), \Omega)
\]
(see \Cref{AM-X0n} for this identification). The bounds obtained in this way are far from those needed for \eqref{wf-str-int-ann}, but the intersection-theoretic approach is not specific to $\omega_f$---in essence it bounds the exponent of the finite group 
\[
H^0(X_0(N)^\infty, \Omega^1)/H^0(X_0(N), \Omega).
\]
\emph{Loc.~cit.}~carries it out\footnote{Unfortunately, beyond the case $\val_p(N) = 1$ treated in \cite{DR73}*{Chapitre VII, Section 3.19, Proposition 3.20}, the explicit bounds stated in \cite{BDP17}*{Theorem B.3.2.1}  suffer from a typo in the values of the multiplicities of the components of $\sX_0(N)_{\bF_p}$ stated in \cite{BDP17}*{Theorem B.3.1.3} (by \cite{KM85}*{Section (13.5.6)}, the correct multiplicity of the $(a, b)$-component for $a, b > 0$ is $p^{\min(a,\, b) - 1}(p - 1)$). 
Consequently, the asymptotic behavior in $p$ of the stated bounds differs from the case $\val_p(N) = 1$.} for the line bundle $\omega^{\tensor k}$ 
in place of $\Omega$.

Turning back to \Cref{main-general}, the only role of its rational singularity assumption  is to ensure that $\Pic^0_{X_0(N)/\bZ}$ is the N\'{e}ron model $\cJ_0(N)$ of the Jacobian $J_0(N)$ (here we chose $\Gamma' = \Gamma_0(N)$ to simplify), and so to deduce from \eqref{wf-str-int-ann} that $\omega_f$ lies in an even \textit{a priori} smaller lattice $H^0(\cJ_0(N), \Omega^1)$ that seems otherwise inaccessible. We do not know any $N$ for which this assumption fails, in fact, for a prime $p$ we show in \Cref{rat-sing-main} that $X_0(N)_{\bZ_{(p)}}$ has rational singularities in the following cases:
\benumr
\m
if $p \ge 5$; or

\m
if $p = 3$ and either $\val_p(N) \le 2$ or there is a prime $p' \mid N$ with $p' \equiv 2 \bmod 3$; or

\m
if $p = 2$ and either $\val_p(N) \le 2$ or there is a prime $p' \mid N$ with $p' \equiv 3 \bmod 4$.
\eenum
The bulk of this rational singularity criterion is due to Raynaud \cite{Ray91}, but we used low conductor instances of the Manin conjecture to add the cases $p \le 3$ with $\val_p(N) = 2$. 
The technique we develop for this also reduces the desired divisibility $c_\phi \mid \deg(\phi)$ in its few still outstanding cases to a \emph{finite} computational problem (albeit not one we know how to solve completely), see \Cref{reduce-to-compute}.


\bpp[Notation and conventions] \lab{conv}
For a prime $p$, we let $\val_p\colon \overline{\Q}_p \rightarrow \Q \cup \{\infty\}$ be the $p$-adic valuation with $\val_p(p)=1$. For a nonarchimedean local field $F$, we let $\cO_F$ be its integer ring, $\fm_F \subset \cO_F$ the maximal ideal, $\varpi_F \in \fm_F$ a uniformizer, $\bF_F \ce \cO_F/\fm_F$ the residue field, $\qF\ce\#\FF$ its order, and $W_F \subset \Gal(\ov{F}/F)$ the Weil group. 
We normalize local class field theory by letting \emph{geometric} Frobenii map 
to uniformizers 
(see \cite{BH06}*{Section 29.1}).
We normalize the absolute value $\absF{\,\cdot\,}$ on $F$  by $\absF{\varpi_F}=\frac 1\qF$. 
We set $\zeta_F(s) \ce \frac{1}{1-\qF^{-s}}$, 
for which we only need the values
\be \label{zeta-values}
\tst \zeta_F(1) = \frac{\qF}{\qF - 1} \qxq{and} \zeta_F(2) =\frac{\qF^2}{\qF^2 - 1}.
\ee
 For a (continuous) character $\chi\colon \Fx\rightarrow\Cx$, we let $a(\chi)$ be the \textit{conductor exponent}: the smallest $n > 0$ with $\chi(1 + \fm_F^n) = 1$ if $\chi(\cO_F^\times) \neq \{1\}$ and $0$ if $\chi(\cO_F^\times) = \{1\}$ (in which case $\chi$ is \textit{unramified}). 
 For a nontrivial additive character $\psi \colon F \ra \bC^\times$, we let $c(\psi)$ be the  smallest\footnote{In terms of the notation  $n(\psi)$ used in \cite{Tat79}*{Section (3.2.6)} or \cite{Del73b}*{Section 3.4}, we have $c(\psi)=-n(\psi)$.} $n \in \bZ$ with $\psi(\fm_F^{n}) = \{ 1\}$. 




For an open subgroup $\Gamma \subset \GL_2(\wh{\bZ})$, we let $\sX_\Gamma$ be the level $\Gamma$ modular Deligne--Mumford $\bZ$-stack defined in \cite{DR73}*{Chapitre IV, D\'{e}finition~3.3} via normalization, and $X_\Gamma$ its coarse moduli space, so that  $X_\Gamma$ is the usual projective modular curve over $\bZ$ of level $\Gamma$ and, whenever $\Gamma$ is small enough, $\sX_\Gamma = X_\Gamma$ (see \cite{modular-description}*{Section 4.1 and Section 6.1 up to Proposition 6.3} for a basic review of these objects). We let
\[\ba
\Gamma_0(N) \subset \GL_2(\wh{\bZ})  &\qxq{be the preimage of} \{ \ppp{\begin{smallmatrix} * & * \\ 0 & * \end{smallmatrix}} \} \subset \GL_2(\bZ/N\bZ), \qxq{and set} \sX_0(N) \ce \sX_{\Gamma_0(N)}; \\
\Gamma_1(N)\subset \GL_2(\wh{\bZ}) &\qxq{be the preimage of} \{ \ppp{\begin{smallmatrix} 1 & * \\ 0 & * \end{smallmatrix}} \} \subset \GL_2(\bZ/N\bZ), \qxq{and set} \sX_1(N) \ce \sX_{\Gamma_1(N)}; \\
\Gamma(N)\subset \GL_2(\wh{\bZ}) &\qxq{be the preimage of} \{ \ppp{\begin{smallmatrix} 1 & 0 \\ 0 & 1 \end{smallmatrix}} \} \subset \GL_2(\bZ/N\bZ), \qxq{and set} \sX(N) \ce \sX_{\Gamma(N)}.
\ea
\]
We write $X_0(N)$, $X_1(N)$, $X(N)$ for the coarse spaces and use
the $j$-invariant to identify $X(1)$ with $\bP^1_\bZ$ (see \cite{DR73}*{Chapitre VI, Th\'{e}or\`{e}me 1.1, Section 1.3}).  For a scheme $X$, we let $X^\reg \subset X$ be the set of $x \in X$ with $\sO_{X,\, x}$ regular. If $X$ is over a base $S$, we let $X^\sm \subset X$ be the open locus of $S$-smoothness. We let $\Omega^1_{X/S}$ denote the K\"{a}hler differentials. We let $\ov{x}$ be a geometric point over $x$ and let $\sO_{X,\, x}^\sh$ or $\sO_{X,\, \ov{x}}^\sh$ denote the resulting strict Henselization. We also use analogous notation when $X$ is merely a Deligne--Mumford stack.

We let $\ov{\bZ}$ be the integral closure of $\bZ$ in $\bC$, set $\zeta_n \ce e^{2 \pi i /n}$, and let $\bZ_{(p)}$ be the localization of $\bZ$ at the prime $(p)$. We let $\phi(m) \ce \#((\bZ/m\bZ)^\times)$ be the Euler totient function. For a field, a `finite extension' means a finite field extension. Rings are assumed to be commutative. Both $\subset$ and $\subseteq$ allow equality. We write $\cong$ for canonical isomorphisms (identifications), $\simeq$  for noncanonical ones, $\hra$ for monomorphisms, $\surjects$ for epimorphisms, and $\isomto$ for isomorphisms (in categories in question). Our representations and characters 
are continuous and  over $\bC$, 
and $\mathbf{1}$ is the trivial character.

\epp

\subsection*{Acknowledgements}
The first-named author thanks Naoki Imai for numerous discussions about the Manin constant during 2017--2018. We thank the referee for helpful comments. We thank the Technische Universit\"at Darmstadt and the Queen Mary University of London for hospitality during visits between the authors. The first-named author acknowledges the support of the ANR-19-CE40-0015-02 COLOSS. The second-named author was partially funded by the DFG research group 1920 and the LOEWE research unit ``Uniformized Structures in Arithmetic and Geometry.''
 The third-named author acknowledges the support of the Leverhulme Trust Research Project Grant RPG-2018-401.
This project has received funding from the European Research Council (ERC) under the European Union's Horizon 2020 research and innovation programme (grant agreement No.~851146).  


\section{$p$-adic properties of Gauss sums} \label{s:prelim-local}



Our ultimate source of $p$-adic properties of coefficients of $q$-expansions of newforms at cusps is the $p$-adic properties of Gauss sums of characters, relatedly, of $\eps$-factors of $\GL(1)$. Thus, we begin by explicating the latter in this section, especially, in \Cref{a(mu)=1} and \Cref{a(mu)>1}.


\bpp[Local field Gauss sums] \label{s:localgauss}
For a finite extension $F/\bQ_p$, a multiplicative character $\chi \colon \!F^\times \!\!\ra\!\bC^\times$, a nontrivial additive character $\psi \colon F \ra \bC^\times$, the \textit{Gauss sum} of $\chi$ with respect to $\psi$ is defined by
\begin{equation*}
\Ga_\psi(x,\chi) \ce \int_{\cO_F^\times}\chi(y)\,\psi(xy)\,\dxy \qxq{for} x\in\Fx, \qxq{with the normalization} \int_{\cO_F^\times} \dxy=1.
\end{equation*}
Since $\Ga_\psi(x,\chi)$ only sees $\chi|_{\cO_F^\times}$, it does not change when $\chi$ is multiplied by an unramified character, so we lose no generality if we assume that $\chi$ lies in the set
\[
\Xc \ce \{\x{continuous character }\chi\colon\Fx\rightarrow\Cx \x{ with } \chi(\varpi_F)=1 \} \cong \Hom_\cont(\cO_F^\times, \bC^\times).
\]
Characters in $\Xc$ are unitary and of finite order, and we also consider subsets of fixed conductor exponent:
\begin{equation*}
\Xc_{\le k} \ce \{\chi\in\Xc\,\vert\,a(\chi)\leq k\} \andy \Xc_{k} \ce \{\chi\in\Xc\,\vert\,a(\chi)= k\}
\end{equation*}
(to stress the underlying field, we also write $\Xc_F$, $\Xc_{F,\, \le k}$, and $\Xc_{F,\, k}$). The Gauss sum $\Ga_\psi(x,\chi)$ is related to the $\GL(1)$-epsilon factors $\eps(s, \chi, \psi)$ defined by Tate (see \cite{Tat79}*{Section~(3.2)} or \cite{Sch02}*{Section 1.1}): under the common normalization $c(\psi) = 0$, by \cite{CS18}*{Lemma 2.3}, for every $\chi \in \fX$, we have 
\be \label{e:gausseps}
\Ga_\psi(x,\chi)= \begin{cases}1 & \text{ if } a(\chi) = 0 \x{ and } \val_F(x) \ge 0, \\ -\frac{1}{\qF -1} &\text{ if } a(\chi) = 0 \x{ and } \val_F(x) = -1, \\  \frac{\qF^{1-a(\chi)/2}}{q_F - 1}\eps(\frac 12, \chi^{-1}, \psi)\chi(x\i)  &\text{ if } a(\chi) > 0 \x{ and } \val_F(x)=-a(\chi), \\ 0 & \text{ otherwise.} \end{cases}
\ee
We will use this together with properties of $\eps$-factors: for instance, for a multiplicative character $\chi\colon F^\times \ra \bC^\times$, a nontrivial additive character $\psi\colon F \ra \bC^\times$, and any $s \in \bC$, by \cite{Sch02}*{Section 1.1}, we have
\be\label{e:epspsi}
\tst \eps(s, \chi, \psi) = \eps(\frac 12, \chi, \psi) \qF^{(c(\psi)-a(\chi))(s - \frac 12)} \qxq{and} \eps(\frac 12, \chi, a\psi)  = \chi(a) \eps(\frac 12, \chi, \psi) \qxq{for}  a \in F^\times,
\ee
where $a\psi \colon F \ra \bC^\times$ is the character $x \mapsto \psi(ax)$. In particular, there is little harm in restricting to $s = \frac12$ and assuming the common normalization $c(\psi)=0$, under which, by \emph{loc.~cit.},~we have
\be \label{e:epsunramtwist}
\tst\eps(\frac 12, \chi\chi', \psi) = \chi'(\varpi_F)^{a(\chi)} \eps(\frac 12, \chi, \psi) \qxq{and} \eps(\frac 12, \chi', \psi) = 1 \qxq{whenever} a(\chi')=0,\ee
\be \label{e:epsduality}
\tst\eps(\frac 12, \chi, \psi) \eps(\frac 12, \chi^{-1}, \psi) = \chi(-1) \qxq{and, if $\chi$ is unitary, then} |\eps(\frac 12, \chi, \psi)| = 1.
\ee
\epp


Due to \eqref{e:gausseps}, the only case in which the study of the $p$-adic properties of $\Ga_\psi(x,\chi)$ has substance is when $\chi$ is ramified and $\val_F(x) = -a(\chi)$. Moreover, by a change of variables,
\[
\Ga_\psi(xu,\chi) = \chi(u^{-1})\Ga_\psi(x,\chi) \qxq{for} u \in \cO_F^\times, \qxq{so it suffices to consider} \Ga_\psi(\varpi_F^{-a(\chi)},\chi).
\]
We will analyze the latter below, and we begin in \Cref{p:maingauss} with the case $a(\chi)=1$, a case whose study reduces to that of classical Gauss sums of multiplicative characters of finite fields.

\bpp[Finite field Gauss sums] \label{rem:divibility of gauss sums by other primes} \label{pp:gauss-sum-finite-field}
For a finite extension $\bF/\bF_p$, a character $\chi:\bF^\times\to\C^\times$, and a nontrivial additive character $\psi:\bF\to \C^\times$,
the \emph{classical Gauss sum} of $\chi$ (with respect to $\psi$) is
\[
\tst g_\psi(\chi)\ce -\sum_{a\in\bF^\times}\chi(a)\psi(a), \qxq{so that} g_\psi(\chi) \in \Z[\zeta_{\#\bF-1},\zeta_p].
\]
By, for instance, \cite{Was97}*{Lemma~6.1}, we have
\[
g_\psi(\mathbf{1})=1 \qxq{and} g_\psi(\chi)\overline{g_\psi(\chi)} = \#\bF \qxq{for} \chi \neq \mathbf{1},
\]
so the prime ideals of $\Q(\zeta_{\#\bF-1},\zeta_p)$ that divide $g_\psi(\chi)$ all lie above $p$ and
\be\label{l:gaussquadfinitef}
\x{if} \q \chi^2=\mathbf{1}, \qx{then} \q g_\psi(\chi)^2 = \chi(-1)\cdot \#\bF. 
\ee
We will be interested in $\val_p(g_\psi(\chi))$ for the $p$-adic valuation $\val_p$ determined by a choice of an isomorphism $\iota\colon \overline{\Q}_p \simeq \C$. Via Teichm\"uller representatives, the latter determines a character
\[
\omega_{\bF}:\bF^\times\to\C^\times \qxq{of order} \#\bF-1 \qxq{such that} \omega_{\bF}(a) \equiv a \bmod p.
\]
Thus, every $\chi:\bF^\times\to\C^\times$ is of the form $\chi = \omega_{\bF}^{-\alpha(\chi)}$ for a unique
$0\le \alpha(\chi) < \#\bF-1$, and we set
\[
\tst s(\chi) \ce \sum_{i = 0}^{[\bF:\Fp]-1} a_i, \qxq{where}  \alpha(\chi) = \sum_{i = 0}^{[\bF:\Fp]-1} a_i p^{i}, \q 0\le a_i \le p-1, \qx{is the base-$p$ expansion}
\]
($s(\chi)$ and $\alpha(\chi)$ depend on the implicitly fixed $\iota$; abusively, we also extend this notation to characters $\wt{\chi} \colon F^\times \ra \bC^\times$ with $a(\wt{\chi}) \le 1$, where $F/\bQ_p$ is a finite extension with residue field $\bF$). Certainly,
\be \label{e:schiquadratic}
\tst 0 \le s(\chi) \le (p-1)[\bF:\Fp] \qxq{with} s(\chi) =\begin{cases} 0 &\x{if and only if $\chi = \b1$,} \\  \frac{(p-1)[\bF:\Fp]}{2} &\text{if $p$ is odd, }~\chi^2 = \mathbf{1},~\chi \ne \mathbf{1}.\end{cases}
\ee
By \cite[Lemma 6.11, Proposition 6.13]{Was97}, we have
\be \label{e:schiprop1}
s(\chi \chi') \equiv s(\chi)+s(\chi') \bmod{p-1}, \q
0 \le  s(\chi \chi') \le s(\chi) + s(\chi').
\ee
In particular, since, for a finite extension  $\bF'/\bF$, both $\omega_{\bF'}|_{\bF} = \omega_{\bF}$ and $\omega_\bF \circ \mathrm{Norm}_{\bF'/\bF} = \prod_{i = 0}^{[\bF' : \bF] - 1}\omega_{\bF'}^{ (\#\bF)^i }$,
\be \label{e:schiprop2}
s(\xi|_{\bF^\times}) \equiv s(\xi) \bmod{p-1} \ \ \x{and}\ \ s(\chi \circ \mathrm{Norm}_{\bF'/\bF}) \equiv [\bF' : \bF] s(\chi) \bmod p - 1
\ \ \x{for}\ \ \xi \colon \bF'^\times \ra \bC^\times.
\ee
By a special case of Stickelberger's congruence, that is, by \cite{Was97}*{Proposition 6.13 and before Lemma 6.11},
\[
\tst \val_p(g_\psi(\chi)) = \frac{s(\chi)}{p-1}, 
\]
and this key identity gives the following result.
\epp


\bprop\label{lem:gauss sum of conductor 1} \label{p:maingauss} \label{a(mu)=1}
For a finite extension $F/\Q_p$, a multiplicative character $\chi \colon F^\times \ra \bC^\times$ with $a(\chi)\le~\!1$, an additive character $\psi \colon F \ra \bC^\times$ with $c(\psi)=0$, an $x \in \varpi_F^{-1} \cO_F^\times$, and an isomorphism $\bC \simeq \ov{\bQ}_p$,
\[
\tst \val_p (\Ga_\psi(x,\chi)) = \frac{s(\chi)}{p-1},
\]
and $(\qF-1)\Ga_\psi(x,\chi)$ is an algebraic integer in $\Q(\zeta_{\qF-1},\zeta_p)$ that is a unit away from $p$.
\eprop

\begin{proof}
Since $a(\chi) \le 1$, we may view $\chi$ as a nontrivial character of $\bF_F^\times$. Moreover, since $c(\psi) = 0$, the character $\psi$ defines a nontrivial additive character $\ov{\psi} \colon \bF \ra \bC^\times$ by $\ov{\psi}(t \bmod \fm_F) \ce \psi(\varpi_F^{-1}t)$. The definitions reviewed in \S\S\ref{s:localgauss}--\ref{pp:gauss-sum-finite-field} give $\Ga_\psi(\varpi_F^{-1},\chi) = -\frac{g_{\ov{\psi}}(\chi)}{q_F-1}$, so \S\ref{pp:gauss-sum-finite-field} 
gives the claims.
\end{proof}


A similar analysis of $\Ga_\psi(\varpi_F^{-a(\chi)},\chi)$ for $a(\chi) \ge 2$ in \Cref{prop:main gauss for a(mu)>1} will use the following lemmas whose goal is to express this Gauss sum more or less explicitly.



\begin{lemma}\label{l:vunit}
For a finite extension $F/\bQ_p$, a multiplicative character $\chi \colon F^\times \ra \bC^\times$ with $a(\chi) \ge 2$, and an additive character $\psi \colon F \ra \bC^\times$ with $c(\psi) = 0$, there is a $u \in \cO_F^\times$ such that
\benumr
\item \label{item: mult -> additive, even case}
if $a(\chi)$ is even, then
\[
\qq \chi(1 + \varpi_F^{a(\chi)/2}x) = \psi(u \varpi_F^{-a(\chi)/2}x) \qxq{for all} x \in \cO_F;
\]

\item \label{item: mult -> additive, odd case}
if  $a(\chi)$ is odd, then 
\[
\qq \chi(1 + \varpi_F^{(a(\chi)+1)/2}x) = \psi(u \varpi_F^{-(a(\chi)-1)/2}x) \qxq{for all} x \in \cO_F;
\]

\item \label{item: mult -> additive, odd case 2}
if both $p$ and $a(\chi)$ are odd, then 
\[
\qq \tst \chi(1 + \varpi_F^{(a(\chi)-1)/2}x) = \psi(u(\varpi_F^{-(a(\chi)+1)/2} x - \frac{\varpi_F^{-1} x^2}{2})) \qxq{for all} x \in \OF.
\]
 \end{enumerate}
\end{lemma}

\begin{proof}
We set $\epsilon \ce 0$ if $a(\chi)$ is even and $\epsilon \ce 1$ if $a(\chi)$ is odd, so that the map $x\mapsto  \chi(1 + \varpi_F^{(a(\chi)+\epsilon)/2}x)$ is an additive character $\theta\colon F \ra \bC^\times$ with $c(\theta) = (a(\chi)-\epsilon)/2$. All such characters have the form $x\mapsto \psi(u\varpi_F^{-(a(\chi)-\epsilon)/2}x)$ for some $u\in\cO_F^\times$ (see \cite{BH06}*{Section 1.7, Proposition}), so \ref{item: mult -> additive, even case} and \ref{item: mult -> additive, odd case} follow.

For \ref{item: mult -> additive, odd case 2}, let $U \subset \Oix$ be a set of representatives of $\Oix/(1 + \fm_F^{(a(\chi)+1)/2})$ and  consider the maps
$$
\tst \chi_u \colon 1 + \fm_F^{(a(\chi) - 1)/2} \ra \bC^\times \qxq{for} u \in U
$$
defined by
\[
\tst \chi_u(1 + \varpi_F^{(a(\chi)-1)/2} x) \ce \psi(u(\varpi_F^{-(a(\chi)+1)/2}x - \frac{\varpi_F^{-1} x^2}{2})) = \psi(u\varpi_F^{-a(\chi)}(\varpi_F^{(a(\chi) -1)/2}x - \frac{(\varpi_F^{(a(\chi) -1)/2}x)^2}{2})).
\]
Thanks to the power series expansion $z - \frac{z^2}{2} + \dotsc$ of $\log(1 + z)$, the function $\chi_u$ is a multiplicative character that is trivial on $1 + \fm_F^{a(\chi)}$ but not on $1 + \fm_F^{a(\chi) - 1}$. Moreover, since the characters $(u\psi)|_{\fm_F^{-(a(\chi) + 1)/2}}$ are pairwise distinct (compare with the proof of \cite{BH06}*{Section 1.7, Proposition}), so are the $\chi_u$. Thus, since $\#U = q_F^{(a(\chi)-1)/2}(q_F-1)$, the $\chi_u$ exhaust the set of those multiplicative characters on $(1 + \fm_F^{(a(\chi) - 1)/2})/(1 + \fm_F^{a(\chi)})$ that are nontrivial on $1 + \fm_F^{a(\chi) - 1}$. Consequently, $\chi = \chi_u$ for some $u$, as desired, and, certainly, this $u$ is also a valid choice for part \ref{item: mult -> additive, odd case}.
\end{proof}


\begin{lemma}\label{l:evaluate gauss sums with a(mu)>1}
For a finite extension $F/\bQ_p$, a multiplicative character $\chi \colon F^\times \ra \bC^\times$ with $a(\chi) \ge 2$, an additive character $\psi \colon F \ra \bC^\times$ with $c(\psi) = 0$, and a $u \in \cO_F^\times$ as in Lemma \uref{l:vunit},
\benumr
\item \label{EG-a}
if $a(\chi)$ is even, then
\[
\tst \Ga_\psi(\varpi_F^{-a(\chi)},\chi) = \frac{\qF^{1-a(\chi)/2}}{\qF-1} \psi(-u\varpi_F^{-a(\chi)})\chi(-u).
\]
\item \label{EG-b}
if $a(\chi)$ is odd, then
$$
\tst\ \qqq\Ga_\psi(\varpi_F^{-a(\chi)},\chi) = \frac{\qF^{-(a(\chi)-1)/2}}{\qF-1}\psi(-u\varpi_F^{-a(\chi)}) \sum\limits_{t \in \cO_F/\fm_F} \chi(-u - ut\varpi_F^{(a(\chi)-1)/2}) \psi(-ut\varpi_F^{-(a(\chi)+1)/2})
$$
where we sum over coset representatives 
\up{their choice does not affect the summands}. 
\end{enumerate}
\end{lemma}

\begin{proof}
We again set $\epsilon \ce 0$ if $a(\chi)$ is even and $\epsilon \ce 1$ if $a(\chi)$ is odd. Letting $d^\times y$ and $dy$ be the Haar measures on $F^\times$ and $F$ normalized by $\int_{\Oi^\times} d^\times y =1$ and $\int_{\Oi} dy =1$, respectively, we then have
\begin{align*}
\Ga_\psi(\varpi_F^{-a(\chi)}, \chi) &=\int_{y\in \cO_F^\times}\psi(\varpi_F^{-a(\chi)}y)  \chi(y) d^\times y \\
&=\sum\limits_{v\in \cO_F^\times/(1+\fm_F^{(a(\chi)+\epsilon)/2})}  \chi(v) \int_{y \in (1+\fm_F^{(a(\chi)+\epsilon)/2})}\psi(\varpi_F^{-a(\chi)} vy)  \chi(y) d^\times y
\end{align*}
where the sum is over some \emph{fixed} coset representatives $v \in \cO_F^\times$. The integral in this sum equals
\begin{align*}
 \frac{\qF}{\qF-1} \int&_{y \in (1+\fm_F^{(a(\chi)+\epsilon)/2})} \psi(\varpi_F^{-a(\chi)} vy)  \chi(y) d y  \\
 &=\frac{\qF^{1-(a(\chi)+\epsilon)/2}}{\qF-1}\psi(\varpi_F^{-a(\chi)}v)\int_{y \in \cO_F}\psi(\varpi_F^{-(a(\chi)-\epsilon)/2} vy) \chi(1 + \varpi_F^{(a(\chi)+\epsilon)/2}y)  dy \\
 &\overset{\ref{l:vunit}}{=}\frac{\qF^{1-(a(\chi)+\epsilon)/2}}{\qF-1} \psi(\varpi_F^{-a(\chi)}v)\int_{y \in \cO_F}\psi( \varpi_F^{-(a(\chi)-\epsilon)/2} (v + u)y)dy.
\end{align*}
The latter vanishes unless the integrand defines the trivial additive character of $\cO_F$, that is, unless, $v\equiv -u\bmod \fm_F^{(a(\chi)-\epsilon)/2}$. If $a(\chi)$ is even, this happens precisely when $v$ is in the coset $-u(1+\fm_F^{a(\chi)/2})$, and \ref{EG-a} follows. If $a(\chi)$ is odd, then the same happens precisely when $v$ is in a coset of the form $(-u+t\varpi_F^{(a(\chi)-1)/2})(1+\fm_F^{(a(\chi)+1)/2})$ with $t\in\cO_F$, and two such cosets are distinct if and only if the corresponding $t$ are distinct modulo $\fm_F$. Thus, by choosing coset representatives $t$ for $\cO_F/\fm_F$ and readjusting our choices of coset representatives $v$, for odd $a(\chi)$ we obtain
\[
\Ga_\psi(\varpi_F^{-a(\chi)}, \chi) = \frac{\qF^{1-(a(\chi)+1)/2}}{\qF-1}\sum_{t\in \cO_F/\fm_F}   \chi\left(-u+t\varpi_F^{(a(\chi)-1)/2}\right) \psi\left(\varpi_F^{-a(\chi)}(-u+t\varpi_F^{(a(\chi)-1)/2})\right).
\]
To conclude \ref{EG-b}, it remains to adjust the representatives $t$ by replacing them by $-ut$. Finally, by \Cref{l:vunit}~\ref{item: mult -> additive, odd case}, the summands in \ref{EG-b} are independent of the coset representatives for $\cO_F/\fm_F$.
\end{proof}

\bthm\label{prop:main gauss for a(mu)>1} \label{a(mu)>1}
For a finite extension $F/\Q_p$, a multiplicative character $\chi \colon F^\times \ra \bC^\times$ with $a(\chi)\ge 2$, and an additive character $\psi \colon F \ra \bC^\times$ with $c(\psi)=0$,
\[
\tst  \qF^{-1 + a(\chi)/2}(\qF-1)\Ga_\psi(x,\chi) \qxq{is a root of unity for every} x \in \varpi_F^{-a(\chi)} \cO_F^\times.
\]
\ethm

\bpf
The case of an even $a(\chi)$ follows from  \Cref{l:evaluate gauss sums with a(mu)>1}~\ref{EG-a} (with \eqref{e:gausseps} to replace $x$ by $\varpi_F^{-a(\chi)}$). Thus, we assume that $a(\chi)$ is odd, choose a $u \in \cO_F^\times$ as in \Cref{l:vunit}, and, by \Cref{l:evaluate gauss sums with a(mu)>1}~\ref{EG-b}, need to show that $q_F^{-1/2}T$ is a root of unity where
\[
\tst T\ce \sum_{t \in \Oi/\fm_F} F(t) \qxq{with} F(t) \ce  \chi(1+t\varpi_F^{(a(\chi)-1)/2})\psi(-ut \varpi_F^{-(a(\chi)+1)/2}),
\]
so that $F(t)$ only depends on the class in $\cO_F/\fm_F$ of the representative $t$. For odd $p$, by \Cref{l:vunit}~\ref{item: mult -> additive, odd case 2},
\[
\tst T = \sum_{t \in \cO_F/\fm_F} \psi\left(- \frac{u t^2\varpi_F^{-1}}{2}\right).
\]
Thus, for odd $p$, letting $\psi'\colon \FF \ra \bC^\times$ be the nontrivial additive character  $t \mapsto  \psi\left(- \frac{u t\varpi_F^{-1}}{2}\right)$ and $\chi' \colon \FF^\times \ra \bC^\times$  the unique nontrivial quadratic character, we have
\[
\tst T =  1 + \sum_{t \in \FF^\times} \psi'(t^2) = 1 + \sum_{t \in \FF^\times} (\chi'(t)+1)\psi'(t) = -g_{\psi'}(\chi').
\]
Consequently, \eqref{l:gaussquadfinitef} shows that $q_F^{-1/2}T$ is a root of unity for odd $p$.

In the remaining case $p = 2$, we instead let $\psi'\colon \FF \ra \{\pm 1\} \subset \bC^\times$ be the nontrivial additive character $t \mapsto \chi(1+t\varpi_F^{a(\chi)-1})$ and seek to conclude by showing that $q_F^{-1}T^2$ is a root of unity. For this, we first note that, since $F(2t) = F(0) = 1$, the identity
\[
F(t)F(t') = \chi(1+(t+t')\varpi_F^{(a(\chi)-1)/2}+tt'\varpi_F^{a(\chi)-1})\psi(-u(t+t')\varpi_F^{-(a(\chi)+1)/2}) = F(t+t')\psi'(tt')
\]
applied in the case $t = t'$ shows that each $F(t)$ is a fourth root of unity. We obtain
\[
\tst T^2 = \sum_{t,\,t'\in\FF}F(t)F(t') = \sum_{t,\,t'\in\FF}F(t+t')\psi'(tt') = \sum_{s\in\FF}\ppp{F(s)\sum_{t\in\FF}\psi'(t^2+ts)},
\]
where, since $t \mapsto t^2$ is an automorphism of $\bF_F$ and $\psi'$ is nontrivial, the inner sum vanishes for $s = 0$. For $s \neq 0$, the kernel of the $\bF_2$-linear map $\bF_F \ra \bF_F$ given by $t \mapsto t^2 + ts$ is $\{0,s\}$, so its image is an $\bF_2$-hyperplane $H_s \subset \bF_F$, and hence the inner sum also vanishes if $H_s \neq \Ker(\psi')$ and else equals $q_F$. Thus, we are reduced to showing that there is a unique $s \in \bF_F \setminus \{ 0 \}$ with $H_s = \Ker(\psi')$ or, since the total number of $\bF_2$-hyperplanes in $\bF_F$ is $q_F - 1$, that the $H_s$ exhaust all such hyperplanes.


Scaling by a fixed $r \in \bF^\times_F$ is an $\bF_2$-linear automorphism of $\bF_F$, and the nonzero orbits of this automorphism all have the same order equal to the order $m$ of $r$ in the group $\bF_F^\times$. Thus, scaling by $r$ fixes no $\bF_2$-hyperplane $H \subset \bF_F$ unless $r = 1$: else $m$ would divide the consecutive integers $\#(H \setminus \{ 0 \})$ and $\#(\bF_F \setminus H)$. Consequently, by scaling, $\bF_F^\times$ acts transitively on the set of $\bF_2$-hyperplanes $H \subset \bF_F$ and it remains to note that scaling by an $r \in \bF_F^\times$ brings $H_s = \{t^2 + st \, \vert\, t \in \bF_F\}$ to another hyperplane of this form, namely, to $H_{r's}$ for the unique $r' \in \bF_F$ with $r'^2 = r$.
\epf

The above analysis of Gauss sums $\Ga_\psi(x,\chi)$ gives the following consequence for $\eps$-factors of $\GL(1)$.


\bcor\label{p:maingausseps}For a finite extension $F/\Q_p$, a multiplicative character $\chi \colon F^\times \ra \bC^\times$ of finite order, and a nontrivial additive character $\psi \colon F \ra \bC^\times$, 
we have
\be \label{MGE-eq}
\tst \eps(\frac 12, \chi, \psi) \in \overline{\Z}[\frac{1}{p}]^\times.
\ee
Moreover, for any  isomorphism $\bC \simeq \ov{\bQ}_p$,
\benumr
\item \label{MGE-i}
if $a(\chi)=1$, then, 
with the notation of \uS\uref{pp:gauss-sum-finite-field},
\[
\tst \val_p (\eps(\frac 12,\chi, \psi)) = - \frac{[\FF:\Fp]}{2} + \frac{s(\chi\i)}{p-1}; 
\]

\item \label{MGE-ii}
if $\chi^2=1$ or $a(\chi)>1$, then $\eps(\frac 12, \chi, \psi)$  is a root of unity, and so $\val_p (\eps(\frac 12,\chi, \psi))=0$.
    \end{enumerate}
\ecor

\begin{proof}
By \eqref{e:epspsi}, we may assume that $c(\psi)=0$. The twist by an unramified character formula \eqref{e:epsunramtwist} then settles the case $a(\chi) = 0$ and also allows us to assume that $\chi(\varpi_F)=1$, that is, that $\chi \in \fX$. In this remaining case of a $\chi \in \fX$ with $a(\chi) > 0$, by \eqref{e:gausseps}, we have
\[
\tst \eps(\frac 12, \chi, \psi) = \frac{q_F - 1}{\qF^{1-a(\chi)/2}}\Ga_\psi(\varpi_F^{-a(\chi)}, \chi\i)\chi(\varpi_F^{a(\chi)}).
\]
In particular, \Cref{p:maingauss} and \Cref{a(mu)>1} give $\eps(\frac 12, \chi, \psi) \in \overline{\Z}[\frac{1}{p}]^\times$ as well as \ref{MGE-i} and the $a(\chi) > 1$ case of \ref{MGE-ii}. The remaining $\chi^2 = 1$ case of \ref{MGE-ii} follows from \eqref{e:epsduality}.
\end{proof}

We conclude the section with an explicit analysis of the $\eps$-factors of quadratic characters of $\bQ_2^\times$. This will be useful for studying the $2$-adic properties of Fourier expansions of newforms.


\bpp[Quadratic characters of $\Q_2^\times$]\label{s:type1b}
There are eight characters $\beta\colon \Q_2^\times \ra \bC^\times$ with $\beta^2 = \mathbf{1}$:
$$\Xc^{\rm{quad}}_{\Q_2} := \{ \mathbf{1}, \beta_0, \beta_2, \beta_0 \beta_2, \beta_3, \beta_0 \beta_3, \beta_2\beta_3, \beta_0 \beta_2\beta_3\},$$
where $\mathbf{1}$ is the trivial character, $\beta_0$ is nontrivial and unramified, the conductor exponents of $\beta_2$ and $\beta_0 \beta_2$ are $2$, and those of $\beta_3$, $\beta_0 \beta_3$, $\beta_2\beta_3$, and $\beta_0 \beta_2\beta_3$ are $3$. To normalize for the sake of concreteness: via local class field theory, $\beta_0$ corresponds to the extension $\Q_2(\sqrt{5})/\Q_2$ and satisfies $\beta_0(2)=-1$, whereas $\beta_2$ corresponds to the extension $\Q_2(\sqrt{-1})$ and satisfies $\beta_2(2)=1$, and $\beta_3$ corresponds to $\Q_2(\sqrt{2})/\Q_2$ and satisfies $\beta_3(2) = 1$ (so $\beta_2\beta_3$ corresponds to $\Q_2(\sqrt{-2})/\Q_2$). In the notation of $\S \ref{s:localgauss}$,
$$\Xc_{\Q_2, 1} = \emptyset, \quad \Xc_{\Q_2, 2}=\{\beta_2\}, \quad \Xc_{\Q_2, 3}= \{\beta_3, \beta_2\beta_3\}.$$
 \epp


 \begin{lemma}\label{l:gausssums2}
 For an additive character $\psi: \Q_2 \rightarrow \C^\times$ with $c(\psi)=0$, there is an $a_\psi \in \Z_2^\times$ with
 \[
 \tst \eps(\frac 12,\beta_2,\psi) = \beta_2(a_\psi)\cdot i, \quad
 \eps(\frac 12,\beta_{3},\psi) = \beta_{3}(a_\psi), \quad \eps(\frac 12,\beta_2\beta_3,\psi) = (\beta_2\beta_3)(a_\psi)\cdot i.
 \]
 \end{lemma}
 \begin{proof}
 The collection of $\psi$ with $c(\psi) = 0$ is a $\bZ_2^\times$-torsor via the action $(a\psi)(x) \ce \psi(ax)$ (see \cite{BH06}*{Section 1.7, Proposition}), so the $\eps$-factor transformation formula \eqref{e:epspsi} reduces us to treating a single $\psi$. We then choose the following $\psi$ with $c(\psi) = 0$ for which we will argue the claim with $a_\psi \ce 1$:
 \[
\tst  \psi(x) \ce \exp(2\pi i\lambda(x)) \qxq{where} \lambda \colon \bQ_2 \surjects \bQ_2/\bZ_2 \hra \bQ/\bZ \cong \bigoplus_{\x{prime }p} \bQ_p/\bZ_p.
 \]
 With the shorthand $\zeta_n \ce e^{2 \pi i /n}$, we obtain
 \[\ba
\tst \Ga_{\psi}(\frac 14,\beta_2) &= \tst \frac12 \left(\zeta_4 \cdot \beta_2(1) + \zeta_4^3\cdot \beta_2(3)\right) = \frac12 \left(i + i\right) = i, \\
\tst \Ga_{\psi}(\frac 18,\beta_{3}) &=\tst  \frac14 \left(\zeta_8\cdot \beta_{3}(1) +  \zeta_8^3\cdot\beta_{3}(3)+ \zeta_8^5\cdot \beta_{3}(5) + \zeta_8^7\cdot \beta_{3}(7)\right)  = \frac14 \left(\zeta_8 -\zeta_8^3 -\zeta_8^5 +\zeta_8^7\right) = \frac{1}{2^{1/2}}, \\
\tst \Ga_{\psi}(\frac 18,\beta_2\beta_3) &=\tst  \frac14 \left(\zeta_8\cdot (\beta_2\beta_3)(1) + \dotsc  
+ \zeta_8^7\cdot (\beta_2\beta_3)(7)\right)  = \frac14 \left(\zeta_8 +\zeta_8^3 -\zeta_8^5 -\zeta_8^7\right) = \frac{1}{2^{1/2}}i.
\ea
\]
Thus, \eqref{e:gausseps} gives the desired
\[ 
\tst \eps(\frac 12,\beta_2,\psi) = i, \quad
 \eps(\frac 12,\beta_{3},\psi) = 1, \quad \eps(\frac 12, \beta_2\beta_3,\psi) = i. \qedhere
 \]
 \end{proof}


\section{$p$-adic properties of local Whittaker newforms} \label{s:valuations of whittaker newforms}


As we will see in \S\ref{s:global p-adic valuations}, the theory of Whittaker models translates the study of $p$-adic properties of Fourier expansions of newforms $f$ at cusps into the study of $p$-adic properties of the values of the Whittaker newform of the $p$-component of the associated cuspidal automorphic representation $\pi_f$. This transforms a global problem into a purely local one, and in this section we place ourselves in the resulting local setting. Namely, we use the theory of local Fourier expansions of the Whittaker newform $W_{\pi,\,\psi}$ of an irreducible, admissible, infinite-dimensional representation $\pi$ of $\GL_2(\bQ_p)$, the recent basic identity (reviewed in \S\ref{pp:basic-identity}) that explicates the resulting local Fourier coefficients, the work of \S\ref{s:prelim-local} on Gauss sums, and the classification of $\pi$ to derive in \Cref{T1,T2} explicit lower bounds on the $p$-adic valuations of values of $W_{\pi,\,\psi}$. We begin by reviewing the local Whittaker newform $W_{\pi,\,\psi}$ in \S\ref{def:localwhit} and its Fourier expansions in \S\ref{pp:basic-identity}.



\bpp[Representations of $\GL_2(F)$ and their conductors] \label{pp:rep-thy-notation}
Let $p$ be any prime, $F/\Q_p$ a finite extension and $\pi$ an irreducible, admissible, infinite-dimensional, complex representation of $\GL_2(F)$ with central character $\omega_{\pi}$ and contragredient $\wt{\pi}$. 
For a character $\chi\colon \Fx \ra \bC^\times$, the twist
\[
\chi\pi \qxq{is the complex representation of $\GL_2(F)$ given by} g \mapsto \chi(\det(g)) \tensor_\bC \pi(g),
\]
so that, for instance, $\omega_{\pi}^{-1}\pi \simeq \wt{\pi}$ 
(see \cite{Del73a}*{\'{e}quation (3.2.2.2)}).
For  $n\geq 0$, we consider the subgroup
\[
K_{1}(n) \ce \{ \ppp{\begin{smallmatrix} a&b\\c&d\end{smallmatrix}}\in \GL_2(\cO_F)\ \vert\ c\in \varpi_F^{n}\cO_F,\ a\in 1+\varpi_F^{n}\cO_F\} \subset \GL_2(\cO_F).
\]
There is the smallest $a(\pi)\geq 0$, the \emph{conductor exponent} of $\pi$, such that the space of $K_{1}(a(\pi))$-fixed vectors in $\pi$ is nonzero, and so necessarily is one-dimensional (see \cite{Del73a}*{Th\'{e}or\`{e}me 2.2.6, D\'{e}finition 2.2.7}). 
For computing $a(\chi\pi)$, we will use \cite{CS18}*{Lemma 2.7}: for $\pi$ and $\chi$ as above with $\omega_\pi = \mathbf{1}$, we have
	\begin{equation} \label{cond-twist}
	a(\chi\pi)\leq \max\{a(\pi),2a(\chi)\}
	\end{equation}
with equality if either $a(\chi) \neq \frac{a(\pi)}{2}$ or $\pi$ is \emph{twist-minimal} in the sense that $a(\pi) = \min_\chi(a(\chi\pi))$, so that, in particular, a $\pi$ with $\omega_\pi = \mathbf{1}$ is twist-minimal whenever $a(\pi)$ is odd. 

For a nontrivial additive character $\psi \colon F \ra \bC^\times$, similarly to \S\ref{s:localgauss}, we let $\eps(s, \pi, \psi) \in \bC^\times$ be the local $\eps$-factor of $\pi$ (see \cite{Sch02}*{Section 1.1} for its review) and abbreviate to $\eps(s, \pi)$ when $\psi$ satisfies $c(\psi) = 0$ (see \S\ref{conv}). This minor abuse is harmless when $\omega_\pi$ is unramified because, by \emph{loc.~cit.}, we have
\[
\tst \eps(s, \pi, \psi) = \eps(\frac 12, \pi, \psi) \qF^{(2c(\psi)-a(\pi))(s - \frac 12)} \qxq{and} \eps(\frac 12, \pi, a\psi)  = \omega_\pi(a) \eps(\frac 12, \pi, \psi) \qxq{for}  a \in F^\times
\]
(compare with \eqref{e:epspsi}). With the common normalization $c(\psi) = 0$, we also have (\emph{loc.~cit.})
\[
\eps(s, | \cdot |^t\pi, \psi) = \qF^{-a(\pi)t} \eps(s, \pi, \psi) \qxq{for} t \in \bC,
\]
\begin{equation} \label{eqn:eps-product}
\tst \eps(s, \pi, \psi)\eps(1 - s,\omega_{\pi}^{-1}\pi, \psi) = \omega_\pi(-1), \qxq{so} \eps(\frac12, \pi, \psi)= \pm 1 \qxq{whenever} \omega_\pi = \mathbf{1}.
\end{equation}
\epp



\bpp[The Whittaker newform of $\pi$]\label{def:localwhit} \label{pp:Whittaker}
For a nontrivial additive character $\psi \colon F \ra \bC^\times$, we set
\[
\cW_\psi \ce \{ \x{locally constant } W \colon \GL_2(F) \ra \bC \x{ with } W(
\ppp{\begin{smallmatrix}
1& x\\&1
\end{smallmatrix}} g)=\psi(x)W(g) \x{ for } 
x\in F,\, g\in \GL_2(F)\}.
\]
The group $\GL_2(F)$ acts on the $\bC$-vector space $\cW_\psi$ by $(g'W)(g) \ce W(gg')$ and, by \cite{Del73a}*{before Proposition 2.2.3}, each $\pi$ as in \S\ref{pp:rep-thy-notation} 
is isomorphic 
to the unique subspace $\Wh_\psi(\pi) \subset \cW_\psi$, the \emph{Whittaker model} of $\pi$. The \textit{normalized Whittaker newform} of $\pi$ is the unique $K_{1}(a(\pi))$-invariant element 
\[
W_{\pi,\,\psi}\in\Wh_\psi(\pi) \qxq{such that}  W_{\pi,\,\psi}(1)=1.
\]
For an unramified multiplicative character $\chi\colon F^\times \ra \bC^\times$, 
we have\footnote{
The map $\iota_\chi\colon W \mapsto (g \mapsto \chi(\det(g))W(g))$ is a $\bC$-linear automorphism of $\cW_\psi$ such that
\[
\chi(\det(g'))(\iota_\chi(g' W)) =  g' (\iota_\chi(W)) \qxq{for} g' \in \GL_2(F).
\]
Thus, $\iota_\chi$ induces a $\GL_2(F)$-isomorphism $\wt{\iota}_\chi\colon\cW_\psi \isomto \chi\i\cW_\psi$, so that $\iota_\chi(\cW_\psi(\pi)) = \cW_\psi(\chi\pi)$ and $\iota_\chi(W_{\pi,\,\psi}) = W_{\chi\pi,\,\psi}$.}
\be \label{lem:twist by unramified finite order char}
W_{\chi\pi,\,\psi}(g) = \chi(\det(g))W_{\pi,\,\psi}(g) \qxq{for all} g\in \GL_2(F).
\ee
\epp

\bpp[The coset representatives $g_{t,\,\ell,\,v}$] \label{elements-g}
The values of the Whittaker newform $W_{\pi,\,\psi}$ on the double coset $Z(F)U(F)gK_1(a(\pi))$, where  $Z \subset \GL_2$ is the center and $U \subset \GL_2$ the ``upper right'' unipotent subgroup, 
are determined by $W_{\pi,\,\psi}(g)$. We choose the representatives $g$ as follows: we set
\[
g_{t,\,\ell,\,v} \ce \ppp{\begin{smallmatrix}
\varpi_F^t&\\&1
\end{smallmatrix}} \ppp{\begin{smallmatrix}
& 1\\ -1&
\end{smallmatrix}} \ppp{\begin{smallmatrix}
1& v\varpi_F^{-\ell} \\&1
\end{smallmatrix}}
= \sabcd{}{\varpi_F^{t}}{-1}{-v\varpi_F^{-\ell}} \in \GL_2(F) \qxq{for} t,\,\ell\in\Z \qxq{and} v\in\cO_F^\times
\]

and recall from \cite{Sah16}*{Lemma~2.13} that, letting $v$ range over the indicated coset representatives,\footnote{\label{foot-dec}One argues the decomposition as follows. For the upper triangular Borel $B \subset \GL_2$, the valuative criterion of properness for $B\backslash\GL_2$
and the vanishing $H^1(\cO_F, B) = \{ * \}$ show that $\GL_2(\cO_F) \surjects (B \backslash \GL_2)(F)$, and so give the Iwasawa decomposition $\GL_2(F) = B(F)\GL_2(\cO_F)$, which one refines to $\GL_2(F) =\ppp{Z(F)U(F)\abcd{\{\varpi_F^a\}_{a \in \bZ}}001} \GL_2(\cO_F)$. The advantage of the refinement is that the group encoding the nonuniqueness of the decomposition shrinks from $B(\cO_F) = B(F) \cap \GL_2(\cO_F)$ to $Z(\cO_F)U(\cO_F) = \{ \abcd z u 0 z\, \vert\, z \in \cO_F^\times, u \in \cO_F\}$. This group acts on  the primitive vectors $\left(\begin{smallmatrix}x\\y \end{smallmatrix}\right)$ with entries in $\cO_F/\fm_F^n$ by left multiplication: $\left(\abcd z u 0z, \left(\begin{smallmatrix}x\\y \end{smallmatrix}\right)\right) \mapsto \left(\begin{smallmatrix}zx + uy \\ zy \end{smallmatrix}\right)$. The orbits are indexed by both the ``valuation'' $0 \le \ell \le n$ of $y$ and, with the subsequent normalization $y = \varpi_F^\ell$, the  class $\ov{x}$ of $x$ in $\cO_F/(1 + \fm_F^{\min(\ell,\, n - \ell)})$. Since $K_1(n)$ is the stabilizer of $\left(\begin{smallmatrix}1 \\ 0 \end{smallmatrix}\right)$ for the similar transitive left multiplication action of $\GL_2(\cO_F)$, these orbits correspond to the double cosets $Z(\cO_F)U(\cO_F)\backslash \GL_2(\cO_F) /K_1(n)$. In conclusion, $Z(F)U(F)\backslash \GL_2(F)/K_1(n)$ is indexed by invariants $\ell$, $\ov{x}$, and $a$ as above, and it remains to note that for the element $g_{t,\, \ell,\, v}$ these invariants are $\ell$, $v\i$, and $t + 2\ell$, respectively: indeed, the matrix $\ppp{\begin{smallmatrix}v^{-1}&\\\varpi_F^\ell & v \end{smallmatrix}}$ in $\GL_2(\cO_F)$ sends $\left(\begin{smallmatrix}1 \\ 0 \end{smallmatrix}\right)$ to the primitive vector $\left(\begin{smallmatrix}v^{-1}\\\varpi_F^\ell \end{smallmatrix}\right)$ (so its $\ov{x}$ and $\ell$ invariants are $v^{-1}$ and $\ell$, respectively) and can be written in the Bruhat decomposition as
\[\ppp{\begin{smallmatrix}v^{-1}&\\\varpi_F^\ell & v \end{smallmatrix}} = \ppp{\begin{smallmatrix}-\varpi_F^{-\ell}&-v^{-1}\\&-\varpi_F^{\ell} \end{smallmatrix}}
\ppp{\begin{smallmatrix}
& 1\\ -1&
\end{smallmatrix}} \ppp{\begin{smallmatrix}
1& v\varpi_F^{-\ell} \\&1
\end{smallmatrix}} =
\ppp{\begin{smallmatrix}-\varpi_F^{\ell}&\\&-\varpi_F^{\ell} \end{smallmatrix}}\ppp{\begin{smallmatrix}\varpi_F^{-t-2\ell}&\\&1 \end{smallmatrix}}\ppp{\begin{smallmatrix}1&v^{-1}\varpi_F^{t+\ell} \\& 1 \end{smallmatrix}}g_{t,\,\ell,\,v},
\]
which gives the sufficient
$g_{t,\, \ell,\, v} \in Z(F)U(F)\ppp{\begin{smallmatrix}\varpi_F^{t+2\ell}&\\&1 \end{smallmatrix}}\ppp{\begin{smallmatrix}v^{-1}&\\\varpi_F^\ell & v \end{smallmatrix}}$.}

\[
\GL_2(F) =  \bigsqcup_{0 \le \ell \le n}\  \ \bigsqcup_{v \in \cO_F^\times/(1 + \fm_F^{\min(\ell,\, n - \ell)})}\ \ \bigsqcup_{t \in \bZ}\ \  Z(F)U(F) g_{t,\,\ell,\,v} K_1(n).
\]
This decomposition reduces us to studying the values $W_{\pi,\,\psi}(g_{t,\,\ell,\,v})$,
and the following Atkin--Lehner relation that results from  \cite{Sah16}*{Proposition 2.28}\footnote{The proof of this relation 
does not use the blanket assumption of \cite[Section 2]{Sah16} that $\pi$ be unitarizable.} and \eqref{eqn:eps-product}  halves the range of the $\ell$ that one needs to consider: if $\omega_\pi = \mathbf{1}$ and $c(\psi) = 0$, then, for $0\le \ell \le a(\pi)$, there is a $p$-power root of unity $\zeta$ with
\be \label{l:atkinlehner}
	W_{\pi,\,\psi}(g_{t,\,\ell,\,v}) = \pm   \zeta \ W_{\pi,\,\psi}(g_{t +2\ell - a(\pi),\,a(\pi)-\ell,\,-v}). 
\ee
\epp



As we now illustrate, this relation is useful for deducing a description of the $p$-adic valuations of the elements $W_{\pi,\,\psi}(g_{t,\,\ell,\,v})$ with $\ell \in \{0, a(\pi)\}$. 

\begin{proposition}\label{p:otherprimes}
For a finite extension $F/\bQ_p$, an irreducible, admissible, infinite-dimen\-sional representation $\pi$ of $\GL_2(F)$ with $a(\pi)\ge 1$ and $\omega_\pi = \mathbf{1}$, 
an additive character $\psi \colon F \ra \bC^\times$ with $c(\psi) = 0$, a $t \in \Z$, an $\ell \in \{0, a(\pi)\}$, and a $v \in \cO_F^\times$, there is a $p$-power root of unity $\zeta$ such that
	\[
	W_{\pi,\,\psi}(g_{t,\,\ell,\,v}) = \begin{cases}
		\pm\zeta \qF^{-(1+t + \ell)} &\text{if }a(\pi)=1\text{ and }t + \ell \ge -1 ,\\
		\pm\zeta &\text{if }a(\pi)>1\text{ and }t + \ell= -a(\pi),\\
		0 &\text{otherwise.}
	\end{cases}
	\]
\end{proposition}



\emph{Proof.}
Since \eqref{l:atkinlehner} swaps $W_{\pi,\,\psi}(g_{t,\,0,\,v})$ and $W_{\pi,\,\psi}(g_{t-a(\pi),\,a(\pi),\,-v})$, 
 we may assume that  $\ell = a(\pi)$. Then, in terms of the description in \cref{foot-dec}, the matrices $g_{t,\, a(\pi),\, v}$ and $g\ce \abcd{\varpi_F^{t + 2 a(\pi) }}{}{}{1} \abcd {v\i}  \ \ {v\i}$ have the same invariants, so $W_{\pi,\,\psi}(g_{t,\, a(\pi),\, v})$ and $W_{\pi,\,\psi}(g)$ agree up to a factor that is a value of $\psi$, that is, up to a $p$-power root of unity.
It then remains to recall from \cite{CS18}*{Lemma 2.10} that 
\[
\qqqqqqqq W_{\pi,\,\psi}(g) = W_{\pi,\,\psi}(\ppp{\begin{smallmatrix}
\varpi_F^r&\\&1
\end{smallmatrix}})=\begin{cases}
\pm\qF^{-r} &\text{if }~a(\pi) = 1,~r\geq 0,\\
1  &\text{if }~a(\pi) > 1,~ r=0,\\
0 &\text{otherwise.} \qqqqqqqq\qqq\ \, \, \QED
\end{cases}
\]

\bpp[The Fourier expansion of $W_{\pi,\,\psi}(g_{t,\,\ell,\,v})$] \label{pp:basic-identity}
In \S\ref{elements-g}, for fixed $t \in \bZ$ and
$\ell \ge 0$, the function
\[
\cO_F^\times \ni v\mapsto W_{\pi,\,\psi}(g_{t,\,\ell,\,v}) 
\qxq{descends to the quotient}
 \cO_F^\times/(1 + \fm_F^\ell),
\]
so, by Fourier inversion, there are constants $c_{t,\,\ell}(\chi)\in\C$ for $\chi\in\Xc_{\le \ell}$ (see \S\ref{s:localgauss}) such that
\begin{equation}\label{eq_hazel}
\tst W_{\pi,\,\psi}(g_{t,\,\ell,\,v})=\sum_{\chi\in\Xc_{\le \ell}}c_{t,\,\ell}(\chi)\,\chi(v) \qxq{for every} v \in \cO_F^\times.
\end{equation}

To make use of this \emph{local Fourier expansion}, it is key to explicate the Fourier coefficients $c_{t,\,\ell}(\chi) \in \bC$. This may be done in terms of $\eps$-factors of representations of $\GL_2 \times \GL_1$ by using the  \emph{basic identity} of \cite{Sah16}*{Proposition 2.23 and before Remark 2.22}:\footnote{The cited claims do not use the blanket assumption of \cite{Sah16}*{Section 2} that $\pi$ 
be unitarizable.} if 
$c(\psi) = 0$ and 
$\omega_\pi = \mathbf{1}$, then, for $0 \le \ell \le a(\pi)$ and $\chi\in\Xc_{\le \ell}$, 
\[
\tst \frac{\varepsilon(\frac 12,\, \chi\pi)}{L(s,\, \chi\pi)}\, \displaystyle \sum_{t\in\Z}\, \qF^{(t+a(\chi\pi))(\frac 12-s)}\,c_{t,\,\ell}(\chi)= \tst \frac{1}{L(1-s,\, \chi^{-1}\pi)}\,\displaystyle\sum_{r\geq 0}\, \,\,\qF^{-r(\frac 12-s)}\Ga_\psi(\varpi_F^{r-\ell},\chi^{-1})W_{\pi,\,\psi}\ppp{\abcd
{\varpi_F^r}\ \ 1
}
\]
as Laurent polynomials in $\qF^{s}$ with the Gauss sums $\Ga_\psi$ as in \S\ref{s:localgauss}. 
This method for accessing the numbers $c_{t,\,\ell}(\chi)$ was carried out in \cite{Ass19}*{Section 2}, and we  will cite the resulting formulas below. For a discussion of related unpublished approaches of Templier and Hu, see \cite{CS18}*{Remark 2.20}.
\epp




\bpp[Classification of ramified $\pi$ with $\omega_\pi = \mathbf{1}$] \label{s:repclassification} \label{classification}
Our analysis of the Fourier coefficients $c_{t,\,\ell}(\chi)$ 
will rest on the following well-known classification of the irreducible, admissible, infinite-dimensional, representations $\pi$ of $\GL_2(F)$ that are ramified (that is, $a(\pi)\geq 1$) and whose  central character is  trivial (that is, $\omega_\pi=\mathbf{1}$). We refer to \cite[Sections 2--3]{JL70} and \cite[Section 1.2]{Sch02} (or \cite{BH06}*{Section~9.11}) for its justification,  and when possible we also give formulas for $a(\pi)$, $L(s, \chi \pi)$ and $\eps(s, \chi \pi)$ with $\chi \in \Xc$.



\begin{enumerate}
	\item
	\emph{$\pi$ is supercuspidal.} In this case, $a(\pi) \ge 2$ and $L(s, \chi \pi) =1$ (see \cite{Cas73}*{before Lemma on page 303 and middle of page 304} and \cite[Section 24.5]{BH06}). 
	\begin{enumerate}
		\item[(1a)] \emph{$\pi$ is dihedral supercuspidal.} Such a $\pi$ is associated, via the Weil representation, to
a character $\xi \colon E^\times \ra \bC^\times$ of a quadratic extension $E/F$ such that $\xi$ does not factor through $\mathrm{Norm}_{E/F}$, see \cite{JL70}*{Section 4} or \cite{Bum97}*{Theorem 4.8.6}. Equivalently, under the local Langlands correspondence \cite[Sections 33.4 and 34.4]{BH06} such a $\pi$ corresponds to $\Ind_{W_{E}}^{W_F} \xi$ where $\xi$ becomes a character of the Weil group $W_{E}$ via class field theory.
By \cite{JL70}*{Theorem 4.7~(ii)}, for such a $\pi$ we have $\omega_\pi= \xi|_{F^\times}\chi_{E/F}$, where $\chi_{E/F}$ is the quadratic character associated to $E/F$. 
In particular, $\omega_\pi = \mathbf{1}$ forces
		\be \label{eqn:eps-supercuspidal-norm}
			\qqqqqq \xi|_{\im(\mathrm{Norm}\colon E^\times \ra F^\times)} = \mathbf{1}, \qxq{while, by assumption,} \xi|_{\Ker(\mathrm{Norm}\colon E^\times \ra F^\times)} \neq \mathbf{1},
			\ee 
	so that $\xi$ is of finite order.	By \cite{JL70}*{Theorem 4.7~(i), (iii) and page 8} the representation $\chi \pi$ is also dihedral supercuspidal, 
	associated to 
	$\xi(\chi\circ\mathrm{Norm}_{E/F})\colon E^\times \ra \bC^\times$, and\footnote{\label{foot-gamma}By \cite[Lemma 1.2]{JL70} and \eqref{e:gausseps} with \eqref{e:epsunramtwist}--\eqref{e:epsduality}, we have $\gamma = \eps(\frac12, \chi_{E/F})$, and so also $\gamma^2 = \chi_{E/F}(-1)$.} 
	\be \label{eqn:eps-supercuspidal-twist}
		\qqqqqq \eps(s, \chi \pi) = \gamma \eps(s, \xi(\chi\circ\mathrm{Norm}_{E/F}), \psi \circ \mathrm{Trace}_{E/F}) \qxq{for some} \gamma \in \{\pm 1, \pm i\}.
\ee		
		With $d_{E/F}$ being the valuation of the discriminant of $E/F$, by \cite{Sch02}*{Theorem 2.3.2},
		\be \label{e:cond1a}
		\qqqq a(\pi) = [\bF_{E} : \bF_F] a(\xi) + d_{E/F}. \ee


		
		
		\item[(1b)]
		\emph{$\pi$ is nondihedral supercuspidal.}
		For such a $\pi$, we have $\Char(\bF_F) = 2$ and $a(\pi)>2$ (see \cite[Proposition 3.1.4]{Del73a} and \cite{Tun78}*{Proposition 3.5}), but there seems to be no simple expression for $\eps(s, \chi \pi)$. 
		For $F=\Q_2$, we describe such $\pi$ in \Cref{lem:Type1b} below. 
	\end{enumerate}

\item
\emph{$\pi \simeq\mu\St$ is the twist of the Steinberg representation by an unramified character $\mu$ with $\mu^2=\mathbf{1}$.}
In this case, $a(\pi)=1$, and, by \cite{Bum97}*{Section 4.7, equation (7.10)} and \cite{JL70}*{Proposition~3.6}, we have
\[
\qqq L(s,\chi\pi)=\begin{cases}\frac{1}{1- \mu(\varpi_F) \qF^{- 1/2-s }} &\text{if } \chi=\mathbf{1}, \\ 1 &\text{otherwise}, \end{cases} \qq
	\eps(s, \chi \pi) =\begin{cases}-\mu(\varpi_F)\qF^{1/2- s} &\text{if } \chi =\mathbf{1}, \\ \eps(s, \chi)^2 &\text{otherwise.} \end{cases}
	\]
	
\item
\emph{$\pi \simeq\mu\St$ is the twist of the Steinberg representation by a ramified character $\mu$ with $\mu^2=\mathbf{1}$.} In this case, by \cite{Bum97}*{Section 4.7, equation (7.10)} and \cite{JL70}*{Proposition 3.6}, 
we have $a(\pi)=2 a(\mu)\ge 2$ and
\[\ba
\qqq\  L(s,\chi\pi) =\begin{cases}\frac{1}{1- (\chi\mu)(\varpi_F) \qF^{- 1/2-s }} \!\!&\text{if } a(\chi\mu)=0, \\ 1 \!\!&\text{otherwise,} \end{cases} \ \
\eps(s, \chi \pi) =\begin{cases} -(\chi\mu)(\varpi_F)\qF^{ 1/2 - s } \!\!\!&\text{if } a(\chi\mu)=0, \msk \\ \eps(s, \chi\mu)^2,
\!\!\!&\text{otherwise.} \end{cases}
\ea \]

	\item
\emph{$\pi \simeq \mu\absF{\,\cdot\,}^{\sigma} \boxplus \mu\absF{\,\cdot\,}^{-\sigma}$ with $\sigma \neq \pm \frac 12$ is a principal series where the character $\mu \in \Xc$ is ramified with $\mu^2 =\mathbf{1}$.}
In this case, by \cite{JL70}*{Proposition 3.5}, 
we have $a(\pi)=2a(\mu)$ and
\[
\qqq L(s,\chi\pi)=\begin{cases}\frac{1}{(1- \qF^{-\sigma - s})(1- \qF^{\sigma - s})} &\text{if } \chi = \mu, \\ 1 &\text{otherwise,} \end{cases}
\qq \eps(s, \chi \pi) =\begin{cases}1 &\text{if } \chi = \mu, \\ \eps(s , \chi\mu)^2 &\text{otherwise.} \end{cases}
\]

\item
\emph{$\pi \simeq \mu\absF{\,\cdot\,}^{\sigma} \boxplus \mu^{-1}\absF{\,\cdot\,}^{-\sigma}$ is a principal series where the character $\mu \in \Xc$ is ramified with $\mu^2 \ne \mathbf{1}$.}
In this case, by the same reasoning as in the previous case, $a(\pi)=2a(\mu)$ and
\[
\qqq L(s,\chi\pi)=\begin{cases}\frac{1}{1- \qF^{\pm \sigma}} &\text{if } \chi = \mu^{\pm 1}, \\ 
1 &\text{otherwise,} \end{cases}
	\qq \eps(s, \chi \pi) = \qF^{\sigma(a(\chi \mu^{-1}) - a(\chi \mu))} \eps(s, \chi \mu) \eps (s, \chi \mu^{-1}).\]

\end{enumerate}
\epp

We refer to these cases as $\pi$ being of Type 1a, 1b, 2, 3, 4, or 5 (this numbering is not standard). Type 2 will not concern us much because our focus is the case $a(\pi) \ge 2$, and Types 1a, 3, 4, 5 are in some sense similar, for instance, $\eps(s, \pi)$ in these cases is expressed  in terms of $\eps$-factors of characters. Type 1b is the most subtle one, but it benefits from the more precise classification recorded in \Cref{lem:Type1b} that uses the following lemma, which further explicates conductor exponents.

\blem\label{lem:conductors}
For a supercuspidal  representation $\pi$ of $\GL_2(\Q_2)$ with $a(\pi)\ge 2$ and $\omega_\pi = \mathbf{1}$ \up{Type {\upshape 1}}, any twist-minimal twist $\pi_0$ of $\pi$ satisfies
\[a(\pi_0) \begin{cases}= a(\pi) &\text{if }~a(\pi) \text{ is  odd  or if }~ a(\pi)=2,\\
\le  a(\pi)-1 &\text{if }~a(\pi) \x{ is even and }~a(\pi) \ge 4,\\
\in \{a(\pi)-2, \, a(\pi)-1\} &\text{if }~a(\pi) \x{ is even and }~a(\pi) \ge 8.
\end{cases} \]
\elem

\begin{proof}
A twist of a supercuspidal representation is supercuspidal, and hence has conductor exponent $\ge 2$ (compare with \S\ref{s:repclassification}), so the first case follows from \eqref{cond-twist}. The second case may be deduced from \cite{AL78}*{Theorem 4.4 and the remark after it} by globalization, but we give a direct argument.

Suppose, for the sake of contradiction, that $a(\pi)$ is even with $\pi$ twist-minimal and $a(\pi) \ge 4$. By \cite[
Proposition 3.5]{Tun78}, such a $\pi$ is dihedral, associated to some $\xi: E^\times \rightarrow \C^\times$ with $E/\bQ_2$ unramified quadratic. By \eqref{e:cond1a}, we have $a(\xi)=\frac{a(\pi)}2>1$, so, by \cite[Section 18.1, Proposition]{BH06}, for any $\chi \in \Xc_{\Q_2,\, a(\xi)}$ also  $a(\chi \circ \mathrm{Norm}_{E/\Q_2}) = a(\xi)$. In particular, both $\xi$ and $\chi \circ \mathrm{Norm}_{E/\Q_2}$ are nontrivial on the group
\be \lab{disp-gp}
(1 + 2^{a(\xi)-1}\cO_{E}) / (1 + 2^{a(\xi)}\cO_{E}) \simeq (\bZ/2\Z)^2.
\ee
However, $\chi \circ \mathrm{Norm}_{E/\Q_2}$ is trivial on its subgroup $(1 + 2^{a(\xi)-1}\Z_2) / (1 + 2^{a(\xi)}\Z_2) \simeq \Z/2\Z$, and so is $\xi$: indeed, \eqref{eqn:eps-supercuspidal-norm} gives $\xi|_{\im(\mathrm{Norm}\colon E^\times\, \ra\, \Q_2^\times)} = \mathbf{1}$, whereas 
\[
\mathrm{Norm}_{E/\Q_2}\colon 1 + 2^{a(\xi) - 1}\cO_{E} \surjects 1 + 2^{a(\xi) - 1}\Z_2
\]
(see \cite[Chapter V, Section 2, Proposition 3 a)]{Ser79}). It follows that $\xi$ and $\chi \circ \mathrm{Norm}_{E/\Q_2}$ agree on the group \eqref{disp-gp}, so that $a(\xi (\chi^{-1} \circ \mathrm{Norm}_{E/\Q_2}))< a(\xi)$, and hence, by \eqref{e:cond1a}, also $a(\chi^{-1}\pi) < a(\pi)$, a contradiction.

Finally, suppose that $a(\pi)$ is even with $a(\pi)\ge 8$ and write $\pi \simeq \chi \pi_0$, so that \eqref{cond-twist} and the just-established inequality $a(\pi_0) \le a(\pi) - 1$ give $a(\pi) = 2a(\chi)$. Since $\omega_\pi = \mathbf{1}$, the central character of $\pi_0$ is $\chi^{-2}$, to the effect that $a(\pi_0) \ge 2a(\chi^2) $ by \cite{Tun78}*{Proposition~
3.4}. Since $a(\chi) \ge 4$ and we are dealing with $\bQ_2$, we have $a(\chi^2) = a(\chi)-1$, and the desired $a(\pi_0) \ge a(\pi)-2$ follows.
\end{proof}

\brem \label{odd-supercusp-cond}
In contrast, for an odd prime $p$ and a finite extension $F/\bQ_p$, every supercuspidal representation $\pi$ of $\GL_2(F)$ with $\omega_\pi = \mathbf{1}$ is twist-minimal, see, for instance, \cite{HNS19}*{Lemma~2.1}.
\erem

\bprop\label{lem:Type1b}
Up to isomorphism, there are $16$ nondihedral supercuspidal \up{that is, Type 1b}
representations $\pi$ of $\GL_2(\Q_2)$ with $\omega_\pi = \mathbf{1}$. 
Letting $\Xc^{\rm{quad}}_{\Q_2}$ be as in \uS\uref{s:type1b}, such $\pi$ are listed~as 
$$
\{\beta\pi_3: \beta \in \Xc^{\rm{quad}}_{\Q_2}\} \bigsqcup \{\beta\pi_7: \beta \in \Xc^{\rm{quad}}_{\Q_2}\}
$$
with the following conductor exponents\ucolon
\[
\ba
a(\pi_3) &= a(\beta_0 \pi_3)=3, \qq a(\beta_2 \pi_3) = a(\beta_0\beta_2 \pi_3) = 4, \\
a(\beta_3 \pi_3) &= a(\beta_2\beta_3 \pi_3) = a(\beta_0\beta_3 \pi_3) = a(\beta_0\beta_2\beta_3 \pi_3) = 6, \\
a(\pi_7) = a(\beta_0\pi_7)= a(\beta_2 \pi_7) &= a(\beta_0\beta_2 \pi_7) = a(\beta_3 \pi_7) = a(\beta_2\beta_3 \pi_7) = a(\beta_0\beta_3 \pi_7) = a(\beta_0\beta_2\beta_3 \pi_7) = 7.
\ea
\]
In contrast, no \emph{dihedral}	 supercuspidal representation $\pi'$ of $\GL_2(\bQ_2)$ with $\omega_{\pi'} = \mathbf{1}$ has $a(\pi') \in \{3, 7\}$.
\eprop


\begin{proof}
Via the local Langlands correspondence \cite[Section 33.4]{BH06}, 
our supercuspidal $\pi$ 
corresponds to an irreducible, smooth 
representation $\sigma\colon W_{\bQ_2} \ra \GL_2(\bC)$, which has its associated projectivization $\overline{\sigma}\colon W_{\bQ_2} \ra \PGL_2(\bC)$. Since $\omega_\pi = \mathbf{1}$, we have $\det(\sigma) = \mathbf{1}$, so $\sigma(W_{\bQ_2})$ is a subgroup of $\SL_2(\C)$ that is  necessarily finite (see \cite{BH06}*{Section 28.6, Proposition}). Since $\pi$ is nondihedral, $\sigma$ is not induced from a subgroup.  The projective image $\ov{\sigma}(W_{\bQ_2})$ must be the symmetric group $S_4$:  the only other finite, solvable subgroups of $\PGL_2(\bC)$ are cyclic, dihedral, and $A_4$, and the first two cannot occur because $\sigma$ is irreducible and not induced from a quadratic extension (compare with \cite{Wei74}*{Section 13}),
whereas Weil proved in \cite[Sections 34--35]{Wei74} that $\ov{\sigma}(W_{\bQ_2}) \not \simeq A_4$ (more precisely, $\sigma(W_{\bQ_2}) \not\simeq A_4$ because $A_4$ has no irreducible, $2$-dimensional representation, and $\sigma(W_{\bQ_2})$ is not a central extension of $A_4$ by $\bZ/2\bZ$ because the ``Condition C with respect to $A_4$'' of \cite{Wei74}*{Section~21} fails for $\bQ_2$; see also \cite{BR99}*{Section 8}).

Up to conjugation, there is a unique embedding of $S_4$ into $\PGL_2(\C)$ (compare with \cite{Wei74}*{Section~14}), so we fix one such and, in the notation of \emph{op.~cit.},~let $\Delta_0 \surjects S_4$ be the central extension by $\{\pm 1\}$ obtained by the preimage in $\SL_2(\C)$. Since $S_4$ has no faithful, irreducible, $2$-dimensional representations, by conjugating we may assume that $\sigma(W_{\bQ_2}) = \Delta_0$. In particular, the $S_4$-extension $K/\bQ_2$ cut out by $\ov{\sigma}$ extends to a $\Delta_0$-extension $\wt{K}/\bQ_2$. Thus, by \cite[Section 24 (with Sections~16 and 21)]{Wei74} (``Condition C with respect to $\Delta_0$'' is equivalent to ``Condition C with respect to $\Delta_0'$''), this extension also extends to a $\Delta_0'$-extension $\wt{K}'/\bQ_2$ with $\Delta_0' := \GL_2(\bF_3)$ inside $\GL_2(\bC)$ (note that $\GL_2(\bF_3)/\{\pm 1\} \simeq S_4$). By \cite{Wei74}*{Section 36} and \cite{BR99}*{Section 8}, this means that $K$ is one of the two $S_4$-extensions of $\bQ_2$ that extend to $\GL_2(\bF_3)$-extensions of $\bQ_2$. In particular, since any two lifts of $\ov{\sigma}$ to a $\wt{\sigma}\colon W_{\bQ_2} \ra \GL_2(\bC)$ are twists by a character (compare with 
\cite[Section 1]{Koc77}), we have isolated two distinct families of twists of $2$-dimensional, irreducible, smooth representations of $W_{\bQ_2}$ that could contain $\sigma$.

By \cite{Cal78}*{Theorem 5}, there exist representations $\pi_3$ and $\pi_7$ of $\GL_2(\Q_2)$, each either supercuspidal or a twist of Steinberg, such that $\omega_{\pi_3} = \omega_{\pi_7} = \mathbf{1}$ and $a(\pi_3)=3$, $a(\pi_7)=7$. To conclude it then suffices to argue that these $\pi_c$ are nondihedral supercuspidal: indeed, they will be twist-minimal by \Cref{lem:conductors}, 
the representation $\pi$ will be of the form $\beta\pi_c$ with $\beta\in \fX^{\rm{quad}}_{\bQ_2}$, all the latter will be pairwise distinct by \cite{BH06}*{Section 51.5}, and the formulas for the $a(\beta\pi_c)$ will follow from \eqref{cond-twist}.

The formulas for the conductor exponents in \S\ref{s:repclassification} show that $\pi_c$ is not a twist of Steinberg. Thus, we assume that $\pi_c$ is dihedral supercuspidal, associated to a quadratic extension
$E/\bQ_2$ and a character $\xi \colon E^\times \ra \bC^\times$ subject to \eqref{eqn:eps-supercuspidal-norm}. By \cite{Tun78}*{Proposition 3.5}, the extension $E/\bQ_2$ is \emph{ramified}, so that $a(\xi) = c - d_{E/\Q_2} \in \{c-2, c-3\}$ (see \S\ref{s:repclassification} and \S\ref{s:type1b}). For $c = 3$, this is already a contradiction: indeed, since $\bF_{E} \cong \bF_2$, the inequality $a(\xi) \le 1$ gives $a(\xi) = 0$, which contradicts \eqref{eqn:eps-supercuspidal-norm}. For $c = 7$, if $d_{E/\Q_2} = 2$, equivalently, if $a(\xi)=5$, then, by \eqref{eqn:eps-supercuspidal-norm} and \cite{Ser79}*{Chapter~IV, Section 1, Proposition 4 and Chapter V, Section 3, Corollary 3}, we have $\xi|_{1+4 \Z_2} = \mathbf{1}$, so the inclusion $1 + \fm_E^{4}\cO_{E} \subseteq (1 + 4 \Z_2)(1 + \fm_E^{5}\cO_{E})$ contradicts $a(\xi) = 5$. In the remaining case $c = 7$ with $d_{E/\bQ_2} = 3$, we have $a(\xi) = 4$, so again $\xi|_{1 + 4\bZ_2} = \mathbf{1}$, which, since $\xi|_{\Q_2^\times} = \chi_{E/\Q_2}$ (see \S\ref{s:repclassification}), contradicts the conductor-discriminant formula $a(\chi_{E/\Q_2}) = d_{E/\bQ_2} = 3$.
\end{proof}

\begin{remark}
As we learned from Ralf Schmidt, the main assertion of Proposition \ref{lem:Type1b} is due to Nekljudova \cite{Nek75} who obtained it by analyzing the Hecke algebra (see also \cite{Nob78}). With the local Langlands correspondence, it could also be deduced from results in \cite{Zin79} or \cite{Hen79}. 
\end{remark}

To prepare for a $p$-adic study of the values of $W_{\pi,\,\psi}$, we begin by exhibiting a general integrality away from $p$ property of these values in \Cref{prop:Type1b}. Its argument rests on the following~lemma.

\begin{lemma}\label{l:softintegrality}
For a finite extension $E/\Q_p$, 
an $m > 0$, a Haar measure  $dx$ on the \emph{additive} group $E^m$ with $\int_{\cO_{E}^m} d x \in \overline{\Z}[\frac 1p]$, and a function $f\colon (\cO_{E}^\times)^m \ra \overline{\Z}$ that is right multiplication invariant by $(1 + \varpi_{E}^n\cO_{E})^m$ for some $n > 0$ \up{that is, $f(x) = f(xy)$ for $y \in (1 + \varpi_{E}^n\cO_{E})^m$}, we~have
\be\label{soft-1}
\tst \int_{(\cO_{E}^\times)^m} f(x)d x \in \overline{\Z}[\frac 1p];
\ee
for a Haar measure $d^\times x$ on the \emph{multiplicative} group $(E^{\times})^m$ with $\int_{(\cO_{E}^\times)^m} d^\times x \in \overline{\Z}[\frac 1p]$, instead 
\be \label{soft-2}
\tst \frac{1}{\zeta_{E}(1)^m}\int_{(\cO_{E}^\times)^m} f(x)d^\times x \in \overline{\Z}[\frac 1p].
\ee
\end{lemma}

\begin{proof}
Due to \eqref{zeta-values}, the first display implies the second one. For the former,
\[
\int_{(\cO_{E}^\times)^m} f(x)d x =\!\!\ \sum_{x_0 \in (\cO_{E}^\times)^m/(1 + \varpi_{E}^n\cO_{E})^m}\!\!\!\!\!\! f(x_0)  \vol((1 + \varpi_{E}^n\cO_{E})^m) = \frac{\int_{\cO_{E}^m} d x }{q_E^{mn}}\!\! \sum_{x_0 \in  (\cO_{E}^\times)^m/(1 + \varpi_{E}^n\cO_{E})^m}\!\!\!\!\!\! f(x_0),
\]
and it remains to note that  $f$ takes values in $\ov{\bZ}$.  
 \end{proof}


\begin{proposition}\label{prop:Type1b}
For a finite extension $F/\bQ_p$, an irreducible, admissible, infinite-dimen\-sional representation $\pi$ of $\GL_2(F)$ such that $a(\pi)\ge 1$ and $\omega_\pi = \mathbf{1}$, an additive character $\psi \colon F \ra \bC^\times$ with $c(\psi) = 0$, and a $g \in \GL_2(F)$, 
$$
\tst\ \  W_{\pi,\, \psi}(g) \in \overline{\Z}[\frac 1p]\ \ \ \x{if $\pi$ is}\ \ \begin{cases} \x{dihedral supercuspidal \up{Type {\upshape1a}} or a twist of $\St$ \up{Types {\upshape2}, {\upshape3}}, or}\\
\x{principal series $\chi\absF{\,\cdot\,}^{\sigma} \boxplus \chi^{-1}\absF{\,\cdot\,}^{-\sigma}$ \up{Types {\upshape4}, {\upshape5}} with $q_F^{\pm\sigma} \in \overline{\Z}[\frac 1p]$}.
\end{cases}
$$
In addition, if $\pi$ is nondihedral supercuspidal \up{Type {\upshape1b}} and $F = \bQ_2$, then we  have
 \be \label{e:type1bwhittaker}
\tst	W_{\pi,\,\psi}(g) \in\begin{cases}
		\frac{1}{2^{1/2}}  \overline{\Z}  &\text{if }a(\pi)=6,~\ell=3,~t\in \{-3, -4\},\msk \\
		\overline{\Z} &\text{otherwise.}
\end{cases}
	\ee
\end{proposition}


\begin{proof}
By \S\ref{elements-g}, we may assume that $g = g_{t,\,\ell,\,v}$ for a $t \in \Z$, a $0 \le \ell \le a(\pi)$, and a $v \in \cO_F^\times$. For the first assertion, by \Cref{p:otherprimes}, we may assume that $\pi$ is not of Type 2, and, to conclude, claim that $W_{\pi,\, \psi}(g_{t,\,\ell,\,v})$ is a $\overline{\Z}[\frac 1p]$-linear combination of products of quantities $\int_{(\cO_{E}^\times)^m} f(x)d x$ with $f$ and $dx$ as in \Cref{l:softintegrality} for a finite extension $E/F$. This will follow from formulas for $W_{\pi,\, \psi}(g_{t,\,\ell,\,v})$ derived by Assing in \cite{Ass19}*{Section 3}. For referring to them below, we recall from \eqref{eqn:eps-product} that 
\[
\tst \eps(\frac 12, \tilde{\pi}) = \eps(\frac 12, \pi)= \pm 1
\]
and from \eqref{MGE-eq} that $\eps(\frac 12, \chi, \psi) \in \overline{\Z}[\frac 1p]^\times$ for a  character $\chi \colon F^\times \ra \bC^\times$ of finite order.

Namely, \cite{Ass19}*{Lemma 3.1} gives the desired description for $\pi$ of Type 1a (with $E/F$  quadratic and $m=1$; by \eqref{eqn:eps-supercuspidal-twist}, the quantity $\gamma$ there lies in $\{ \pm 1, \pm i\}$). To similarly treat $\pi \simeq \mu \St$ of Type 3, we first twist by a finite order unramified character and use \eqref{lem:twist by unramified finite order char} to assume that $\mu(\varpi_F)=1$, and then apply \cite{Ass19}*{Lemma~3.3}\footnote{Even though the case $\ell = a(\chi) = 1$ is omitted from the cited statement, it is treated in the proof: as is observed in the beginning of the argument there, the subcase $t \neq -2$ reduces to \cite{Ass19}*{Lemma~2.1}, whereas the subcase $t = -2$ is addressed before the phrase ``If $l = 1 = a(\chi)$, we will leave this expression as it is.''} 
(now $E=F$ and $m\in \{1, 2\}$; in the case of \emph{loc.~cit.}~that involves Sali\'{e} sums, we use \eqref{soft-2} instead of \eqref{soft-1}). Finally, for $\pi$ of Type 4 or 5, we combine the assumption $q_F^{\pm \sigma} \in \overline{\Z}[\frac 1p]$ with \cite{Ass19}*{Lemma~3.6} (now $E=F$ and $m\in \{1, 2\}$).

For the remaining \eqref{e:type1bwhittaker}, we assume that $\pi$ is of Type 1b with $F = \bQ_2$ and use \eqref{l:atkinlehner} with \Cref{p:otherprimes} to reduce to $0 < \ell \le \frac{a(\pi)}{2}$. By the classification in \Cref{lem:Type1b}, we have $a(\pi) \le 7$, so the bound is $1 \le \ell \le 3$.  We will use the local Fourier expansion
 $$
 \tst W_{\pi,\,\psi}(g_{t,\,\ell,\,v})\overset{\eqref{eq_hazel}}{=}\sum_{\chi\in\Xc_{\le \ell}}c_{t,\,\ell}(\chi)\,\chi(v)
 $$
and the following formulas for the $c_{t,\,\ell}(\chi)$ derived in \cite{Ass19}*{Section 2.1} from the basic identity of \S\ref{pp:basic-identity}:
\[
c_{t,\,\ell}(\chi) = \begin{cases}
-\eps(\frac 12,\pi)&\text{if }~\ell=1,~t=-a(\pi),~\chi=\mathbf{1},\\
2^{1-\ell/2}\eps(\frac 12,\chi)\eps(\frac 12,\chi^{-1}\pi)&\text{if }
~t=-a(\chi\pi),~\chi\in\Xc_{\ell},\\
0&\text{otherwise}.
\end{cases}
\]
Since $1\le \ell \le 3$, the appearing $\chi$ are quadratic (see \S\ref{s:type1b}), so $\eps(\frac 12,\pi)$, $\eps(\frac 12,\chi)$, and $\eps(\frac 12,\chi^{-1}\pi)$ are all roots of unity (see \eqref{e:epsduality} and \eqref{eqn:eps-product}). Thus, since $2^{1-\ell/2} \in \ov{\bZ}$ for $\ell \le 2$, we reduce to $\ell = 3$, when $a(\pi) \in \{6, 7\}$ and, in the notation of \S\ref{s:type1b}, the only appearing $\chi$ are $\beta_3$ and $\beta_2\beta_3$. If $a(\pi) = 6$, then for these $\chi$, by \Cref{lem:Type1b}, we have $a(\chi\pi) \in \{3,4\}$, and the claim follows. In the remaining case $a(\pi) =7$, we likewise have $a(\chi\pi) =7$, so we only need to consider the value
$$
\tst W_{\pi,\,\psi}(g_{-7,\,3,\,v}) = \frac 1{2^{1/2}} (\eps(\frac 12,\beta_3)\eps(\frac 12,\beta_3\pi)\beta_3(v) + \eps(\frac 12,\beta_2\beta_3)\eps(\frac 12,\beta_2\beta_3\pi)\beta_2\beta_3(v)).
$$
\Cref{l:gausssums2} gives $\eps(\frac 12,\beta_3) = \pm 1$ and $\eps(\frac 12,\beta_2\beta_3) = \pm i$, and \eqref{eqn:eps-product} gives $\eps(\frac 12,\beta_3\pi) = \pm 1$ and $\eps(\frac 12,\beta_2\beta_3\pi) = \pm 1$. 
Thus, $W_{\pi,\,\psi}(g_{-7,\,3,\,v})$ lies in $\{\pm\frac{1 + i}{2^{1/2}}, \pm\frac{1 - i}{2^{1/2}}\}$, and so is a root of unity in $\ov{\bZ}$.
\end{proof}

A final preparation for \Cref{T1,T2} is the following vanishing result that draws heavily on \cite{CS18}, which studied the phenomenon of exceptional vanishing of the values of $W_{\pi,\, \psi}$.



\bprop\label{l:vanishing of Whittaker newform}
For a finite extension $F/\bQ_p$, an additive character $\psi \colon F \ra \bC^\times$ with $c(\psi) = 0$, an irreducible, admissible, infinite-dimensional representation $\pi$ of $\GL_2(F)$ with $a(\pi)\ge 2$ and $\omega_\pi = \mathbf{1}$, a twist-minimal twist $\pi_0$ of $\pi$, a $0\le \ell \le a(\pi)$, and a $v \in \cO_F^\times$, we have
\[
W_{\pi,\,\psi}(g_{t,\,\ell,\,v})=0 \qxq{if} \begin{cases}
t<- \max(a(\pi), 2\ell), \x{ or} \\
t > - \max(a(\pi), 2\ell),~\ell \neq \frac{a(\pi)}{2}, \x{ or} \\
t > -a(\pi_0),~\x{$\pi$ is supercuspidal \up{Type {\upshape 1}}}, \x{ or} \\
t \neq -\max(a(\pi), 2\ell),~p\x{ is odd,}~\x{$\pi$ is supercuspidal \up{Type {\upshape 1}}}.
 \end{cases}
\]
Moreover, in the case $F = \bQ_2$ we have the following additional vanishing for $\ell = \frac{a(\pi)}{2}$\ucolon  
\[
 W_{\pi,\,\psi}(g_{t,\,\frac{a(\pi)}{2},\,v})=0 \qxq{if} \begin{cases}t \le -a(\pi),~\x{$\pi$ is supercuspidal \up{Type {\upshape 1}} with $a(\pi_0) \le a(\pi) - 1$, or} \\
t \le -a(\pi) + 1,~\x{$\pi$ is supercuspidal \up{Type {\upshape 1}} with $a(\pi_0) \le a(\pi) - 2$, or} \\
t \le -a(\pi) + 1,~\x{$\pi$ is a ramified twist of $\St$ \up{Type {\upshape 3}},  or} \\
t \le -a(\pi) + 1,~\x{$\pi \simeq \mu\abs{\,\cdot\,}^{\sigma}_F \boxplus \mu\abs{\,\cdot\,}_F^{-\sigma}$ with $\sigma \neq \pm \frac 12$, $\mu^2 = \mathbf{1}$ \up{Type {\upshape 4}},  or} \\
t \le -a(\pi) + 2,~\x{$\pi \simeq \mu\abs{\,\cdot\,}^{\sigma}_F \boxplus \mu\i\abs{\,\cdot\,}_F^{-\sigma}$ with $\mu^2 \neq \mathbf{1}$ \up{Type {\upshape 5}}}.
\end{cases}
\]
\eprop
\begin{proof}
The additional vanishing statements for $\ell = \frac{a(\pi)}{2}$ follow from the rest and from \cite[Theorem~2.14]{CS18} (with \S\ref{s:type1b} and \eqref{cond-twist}; for instance, for Type 5, one uses that $a(\mu) \ge 4$, so that also $a(\mu^2) = a(\mu) - 1$). 

For the main statement, its last case follows from the rest: indeed, by \Cref{odd-supercusp-cond}, if $p$ is odd and $\pi$ is supercuspidal, then $a(\pi_0) = a(\pi)$. Moreover, its case $t<- \max(a(\pi), 2\ell)$ follows from \cite{Sah17}*{Proposition 2.10~(1)},\footnote{The proof does not use the assumption of \cite{Sah17}*{Section 2.2} that $\pi$ be unitarizable, 
compare with \cite{CS18}*{Proposition 2.11}.} so we assume that $t\ge - \max(a(\pi), 2\ell)$. In the remaining cases, we use the Atkin--Lehner relation \eqref{l:atkinlehner}, which replaces $t$ by $t + 2\ell - a(\pi)$ and $\ell$ by $a(\pi) - \ell$, to reduce to $0 \le \ell \le \frac{a(\pi)}{2}$, and we will conclude from \eqref{eq_hazel} by arguing that $c_{t,\,\ell}(\chi)=0$ for all $\chi\in\Xc_{\le \ell}$.

For this, we will use the basic identity reviewed in \S\ref{pp:basic-identity}. By inspecting \S \ref{s:repclassification}, in the remaining cases in question we find that $L(s,\chi\pi)=1$, and, by \cite[Lemma 2.10]{CS18},
\[
W_{\pi,\,\psi}\left(\sabcd{\varpi_F^r}{}{}{1}\right) = \begin{cases}1, &\x{if $r = 0$,} \\ 0, &\x{if $r > 0$.} \end{cases}
\]
In effect, the basic identity in the cases in question is the equality
\[ \tst
		\eps(\frac 12,\chi\pi)\sum_{t\in\Z}\qF^{(t+a(\chi\pi))(\frac 12-s)}c_{t,\,\ell}(\chi)
		=
			\Ga_\psi(\varpi_F^{-\ell},\chi^{-1})
\]
of Laurent polynomials in $q_F^s$. In the case when $\ell < \frac{a(\pi)}2$, by \eqref{cond-twist}, we have $a(\chi\pi) = a(\pi)$, so the $c_{t,\,\ell}(\chi)$ indeed vanish for $t \neq -a(\pi)$. In the remaining case when $\pi$ is supercuspidal, we have $a(\chi\pi)\ge a(\pi_0)$, and the $c_{t,\,\ell}(\chi)$ still vanish for $t > -a(\pi_0) \ge -a(\chi\pi)$, as desired.
\end{proof}

In the remaining case $a(\pi) \ge 2$, for clarity, we split the sought bounds on $\val_p(W_{\pi,\,\psi}(g_{t,\,\ell,\,v}))$ into the case of an odd $p$ (\Cref{T1}) and that of $F = \Q_2$ (\Cref{T2}). To avoid additional technical complications, we do not attempt to treat  the case of a general finite extension of $\bQ_2$.

\bthm \label{T1}
For a finite extension $F/\bQ_p$ with $p$ \emph{odd}, an irreducible, admissible, infinite-dimen\-sional representation $\pi$ of $\GL_2(F)$ with $a(\pi)\ge 2$ and $\omega_\pi = \mathbf{1}$, an additive character $\psi \colon\! F \ra \bC^\times$ with $c(\psi) = 0$, an isomorphism $\bC \simeq \ov{\bQ}_p$, 
a $t \in \Z$, a $0 \le \ell \le a(\pi)$, and a $v\in\cO_F^\times$, we have
\[
\tst \val_p(W_{\pi,\,\psi}(g_{t,\,\ell,\,v})) \geq \begin{cases}
0  &\text{if }~\ell \in \{0, a(\pi)\},\\
0  &\text{if }~\ell \in \{1, a(\pi) - 1\},~a(\pi)>2,\\
\tst [\bF_F : \bF_p]\left(1 - \frac{\min(\ell,\, a(\pi)-\ell)}{2}\right)  &\text{if }~\ell \notin \{0,1, \frac{a(\pi)}{2}, a(\pi)-1, a(\pi)\},\\
\tst  -[\bF_F : \bF_p] + \min\left(\frac{[\bF_F : \bF_p]}2, \frac12 +  \frac{1}{p-1}\right)  & \text{if }~\ell=1,~a(\pi)=2,  ~t=-2, \\
\tst [\bF_F : \bF_p]\left(1 - \frac{a(\pi)}{4}\right)  &\text{if }~\ell = \frac{a(\pi)}{2},~a(\pi)>2, ~t= - a(\pi),
\end{cases}
\]
and, for $\ell = \frac{a(\pi)}{2}$ and an even $a(\pi)$, the following additional bounds \up{see also Proposition \uref{l:vanishing of Whittaker newform}}\ucolon
\benumr
\m \label{1-i}
if $\pi$ is  supercuspidal \up{Type {\upshape 1}} with $a(\pi)=2$, then
\[\tst \qq \val_p(W_{\pi,\,\psi}(g_{t,\,1,\,v})) \ge
-[\FF:\Fp] + \frac12 + \frac{1}{p-1}\!;
\]

\m \label{1-ii}
if $\pi$ is a twist of Steinberg by a ramified quadratic character \up{Type {\upshape 3}}, then $a(\pi)=2$ and
\[
\tst \qq \val_p(W_{\pi,\,\psi}(g_{t,\,1,\,v})) \ge -\frac{t+4}{2}[\bF_F : \bF_p] + \min\left(-[\bF_F : \bF_p]\left(\frac{t +1}2\right), \, \frac12 + \frac{1}{p-1}\right)\!;
\]

\m \label{1-iii}
if $\pi$ is a principal series $\mu\abs{\,\cdot\,}_F^{\sigma} \boxplus \mu\abs{\,\cdot\,}_F^{-\sigma}$ with $\mu^2 = \mathbf{1}$ \up{Type {\upshape 4}},  then $a(\pi)=2$ and
\[
\tst \qqq \val_p(W_{\pi,\,\psi}(g_{t,\,1,\,v}))
\ge-[\bF_F : \bF_p]  - (t+2)|\val_p(\qF^{\sigma})| + \min\left(-[\bF_F : \bF_p]\left(\frac{t+1}2\right) , \, \frac12 + \frac{1}{p-1}\right)\!;
\]


\m\label{1-iv}
if $\pi$ is a principal series $\mu\abs{\,\cdot\,}^{\sigma}_F \boxplus \mu^{-1}\abs{\,\cdot\,}^{-\sigma}_F$ with $\mu^2 \neq \mathbf{1}$ \up{Type {\upshape 5}}, then
\[
\tst \qq\ \  \val_p(W_{\pi,\,\psi}(g_{t,\,\frac{a(\pi)}{2},\,v}))\ge
\begin{cases}
-\frac{[\bF_F : \bF_p](t+4)}{2} +\frac12 + \frac{1}{p-1}-(t+2)|\val_p(\qF^{\sigma})| &\x{if }~a(\pi)=2,\\
-\frac{[\bF_F : \bF_p]\max(t + a(\pi),\, a(\pi)/2-2)}{2}  -(t+a(\pi))|\val_p(\qF^{\sigma})| &\x{if }~a(\pi)>2.
\end{cases}
\]

\eenum
\ethm

\bthm \label{T2}
For an irreducible, admissible, infinite-dimensional, representation $\pi$ of $\GL_2(\Q_2)$ with $a(\pi)\ge 2$ and $\omega_\pi = \mathbf{1}$, an additive character $\psi \colon \Q_2 \ra \bC^\times$ with $c(\psi) = 0$, an isomorphism $\bC \simeq \ov{\bQ}_2$, 
a $t \in \Z$, a $0 \le \ell \le a(\pi)$,  and a $v\in \Z_2^\times$, we have
\be \label{e:p2mainlocal}
\tst \val_2(W_{\pi,\,\psi}(g_{t,\,\ell,\,v})) \geq \begin{cases}0  &\text{if }~\ell \in \{0,1,a(\pi)-1, a(\pi)\},\\
\tst 1-\frac{\min(\ell,\, a(\pi)-\ell)}{2}  &\text{if }~\ell \notin \{0,1, \frac{a(\pi)}{2}, a(\pi)-1, a(\pi)\},\\
0  &\text{if }~\ell \in \{3, a(\pi)-3\},~a(\pi)>6,
\end{cases}
\ee
and, for $\ell = \frac{a(\pi)}{2}$ and an even $a(\pi) > 2$, the following additional bounds \up{see also Proposition~\uref{l:vanishing of Whittaker newform}}\ucolon
\benumr
\m \label{case-i}
if $\pi$ is supercuspidal \up{Type {\upshape 1}}, then 
\[
\qqq \tst \val_2(W_{\pi,\,\psi}(g_{t,\,\frac{a(\pi)}{2},\,v})) \ge 1 -\frac{a(\pi)}{4} \ \ \x{and, for $a(\pi)\in\{6,8\}$,}\ \ \val_2(W_{\pi,\,\psi}(g_{-a(\pi)+1,\,\frac{a(\pi)}{2},\,v})) \ge 0;
\]


\m \label{case-iii}
if $\pi$ is a twist of Steinberg by a ramified quadratic character \up{Type {\upshape 3}}, then $a(\pi) \in \{4,6\}$,
\[
\tst \qq \val_2(W_{\pi,\,\psi}(g_{t,\,\frac{a(\pi)}{2},\,v})) = \begin{cases}
-(t+3) &\text{if }~t\ge -2,~a(\pi)=4, \\
 -(t+\frac72) &\text{if }~t\ge -2,~a(\pi)=6,\\
 -\frac12 &\text{if }~t= -4,~a(\pi)=6, \\
  \infty &\text{otherwise}; \\
\end{cases}
\]

\m \label{case-iv}
if $\pi$ is a principal series $\mu\abs{\,\cdot\,}^{\sigma}_{\bQ_2} \boxplus \mu\abs{\,\cdot\,}^{-\sigma}_{\bQ_2}$ with $\mu^2 = \mathbf{1}$ \up{Type {\upshape 4}}, then $a(\pi) \in \{4,6\}$,
\[
\tst \qqq \val_2(W_{\pi,\,\psi}(g_{t,\,\frac{a(\pi)}{2},\,v}))
\begin{cases}
\ge -\frac{t+4}2 - (t+2)|\val_2(2^\sigma)| &\text{if }~t\ge -2,~a(\pi)=4, \\
\ge -\frac{t+5}{2} - (t+2)|\val_2(2^\sigma)| &\text{if }~t\ge -2,~a(\pi)=6,\\
= -\frac 12 &\text{if }~t= -4,~a(\pi)=6,\\
=  \infty &\text{otherwise}; \\
\end{cases}
\]

\m \label{case-v}
if $\pi$ is a principal series $\mu\abs{\,\cdot\,}^{\sigma}_{\bQ_2} \boxplus \mu^{-1}\abs{\,\cdot\,}_{\bQ_2}^{-\sigma}$ with $\mu^2 \neq \mathbf{1}$ \up{Type {\upshape 5}}, then $a(\pi) \ge 8$, 
\[
\tst \qqq \val_2(W_{\pi,\,\psi}(g_{t,\,\frac{a(\pi)}{2},\,v}))\ge
\begin{cases}
\frac{1-t-a(\pi)}{2}-(t+a(\pi)-2)|\val_2(2^{\sigma})| &\text{if }~t\ge -\frac{a(\pi)}{2},\\
\frac{4 - a(\pi)}4-(t+a(\pi)-2)|\val_2(2^{\sigma})| &\text{if }~-a(\pi) + 2 < t< -\frac{a(\pi)}{2}, \\
\infty & \x{if }~t \le -a(\pi) + 2.
\end{cases}
\]
\eenum
\ethm



\bpp[]\emph{Proof of Theorems \uref{T1} and \uref{T2}.}
Even though we have separated the cases of an odd $p$ and of $p = 2$ with $F = \bQ_2$ into separate statements, 
we will prove them simultaneously.
\epp
For $\ell \in \{0, a(\pi)\}$, the assertion is that $\val_p(W_{\pi,\,\psi}(g_{t,\,\ell,\,v})) \geq 0$, which follows from \Cref{p:otherprimes}. Each of the assertions that involves $\ell > \frac{a(\pi)}{2}$ allows any $t \in \bZ$. Thus, we may use the Atkin--Lehner relation \eqref{l:atkinlehner} to switch $\ell$ and $a(\pi)-\ell$ if needed to assume from now on that 
\benuma
\item \label{ass-1}
$1 \le \ell \le \frac{a(\pi)}{2}$ and, by also using \Cref{l:vanishing of Whittaker newform}, that if $\ell < \frac{a(\pi)}{2}$, then $t = -a(\pi)$. 
\eenum
Moreover, $\pi$ is not of Type 2 because $a(\pi) \ge 2$ (see \S\ref{s:repclassification}). If $\pi$ is of Type 1b (so that $p = 2$), then the sought bounds follow from \Cref{lem:Type1b} and \eqref{e:type1bwhittaker}.
Thus, we assume from now on that
\begin{enumerate} \addtocounter{enumi}{1}
\item \label{ass-2}
$\pi$ is not of Type 1b or Type 2.
\end{enumerate}


Our basic strategy is as follows: by the local Fourier expansion \eqref{eq_hazel}, we have
\be\label{eq_hazel-recall}
\tst W_{\pi,\,\psi}(g_{t,\, \ell,\,v}) =\sum_{\chi\in\Xc_{\le \ell}}c_{t,\,\ell}(\chi)\,\chi(v), \qxq{so} \val_p( W_{\pi,\,\psi}(g_{t,\, \ell,\,v})) \ge \min_{\chi\in\Xc_{\le \ell}}(\val_p(c_{t,\,\ell}(\chi))),
\ee
and we will bound $\val_p(c_{t,\,\ell}(\chi))$ individually for each representation in the classification of \S\ref{s:repclassification} (in exceptional cases individual bounds will not suffice and we will consider the full sum). Below we omit our fixed $\psi$ from the notation when forming $\eps$-factors with respect to it.

\textbf{The case when $\pi$ is of Type 1a.} Such a $\pi$ is associated to a character $\xi \colon E^\times \ra \bC^\times$ for a quadratic extension $E/F$. By \cite[Section 2.1]{Ass19}, for $1 \le \ell \le \frac{a(\pi)}{2}$ and $\chi \in \fX_{\le \ell}$,
\begin{equation}\label{e:supercuspidal ctl(mu)}
c_{t,\,\ell}(\chi) = \begin{cases}
-\frac{1}{\qF-1}\eps(\frac 12,\pi)&\text{if }~t=-a(\pi),~\ell=1,~\chi=\mathbf{1},\\
\frac{1}{\qF-1}\qF^{1-\ell/2}\eps(\frac 12,\chi)\eps(\frac 12,\chi^{-1}\pi)&\text{if }~
t=-a(\chi\pi),~\chi\in\Xc_{\ell},\\
0&\text{otherwise.}
\end{cases}
\end{equation}
In particular, $c_{t,\,\ell}(\mathbf{1})=0$ unless $t = -a(\pi)$ and $\ell = 1$, in which case $\val_p(c_{-a(\pi),\,1}(\mathbf{1}))=0$ (see \eqref{eqn:eps-product}), 
and $c_{t,\,\ell}(\chi)=0$ for $\chi\in\Xc_{\le \ell}\setminus\{\mathbf{1}\}$ unless $\chi \in \Xc_\ell$. Since all the required bounds are nonpositive for Type 1a when $\ell=1$, this reduces us to $\chi \in \Xc_\ell$ with $t = -a(\chi\pi)$.

We begin with the case $a(\pi) = 2$, when $\ell=1$ and, since $\chi \in \Xc_1$, also $F \neq \Q_2$ (so that $p$ is odd) and $t = -a(\chi \pi)=-2$ (see \eqref{cond-twist}).
 By \S\ref{s:repclassification}, the representation $\chi\i\pi$ is dihedral supercuspidal associated to $\xi(\chi^{-1}\circ\mathrm{Norm}_{E/F}) \colon E^\times \ra \bC^\times$. By \cite[Proposition~
 3.5]{Tun78}, we may assume that $E/F$ is unramified, so that $a(\xi(\chi^{-1}\circ\mathrm{Norm}_{E/F})) = 1$ by \eqref{e:cond1a}.
Thus, by \eqref{eqn:eps-supercuspidal-twist} and \Cref{p:maingausseps}~\ref{MGE-i},
\[
\tst \val_p(\eps(\frac12, \chi^{-1} \pi)) = \val_p(\eps(\frac12, \xi(\chi^{-1}\circ\mathrm{Norm}_{E/F}), \psi \circ \mathrm{Trace}_{E/F})) = 
-[\FF:\Fp] + \frac{s(\xi\i(\chi\circ\mathrm{Norm}_{E/F}))}{p-1}.
\]
Consequently, \eqref{e:supercuspidal ctl(mu)} and \Cref{p:maingausseps}~\ref{MGE-i} give
\[
\tst \val_p(c_{-2,\,1}(\chi)) = -[\FF:\Fp] + \frac{s(\chi\i) + s(\xi\i(\chi\circ\mathrm{Norm}_{E/F}))}{p-1}.
\]
By \eqref{eqn:eps-supercuspidal-norm}, we have $\xi|_{\cO_F^\times} = \mathbf{1}$, so \eqref{e:schiprop1} and \eqref{e:schiprop2} give $p-1 \mid 2s(\chi\i) + s(\xi\i(\chi\circ\mathrm{Norm}_{E/F}))$. 
Since $s(\chi\i)$ and $s(\xi\i(\chi\circ\mathrm{Norm}_{E/F}))$ are positive, it follows that $s(\chi\i) + s(\xi\i(\chi\circ\mathrm{Norm}_{E/F})) \ge \frac{p-1}2 + 1$.
In conclusion, for $a(\pi) = 2$, 
we obtain the sufficient bound
\[
\tst \val_p(c_{-2,\,1}(\chi)) \ge -[\FF:\Fp] + \frac12 + \frac{1}{p-1}.
\]
We next turn to the case when $a(\pi)>2$ with $1\le \ell<\frac{a(\pi)}{2}$, and $\chi \in \fX_\ell$ with $t = -a(\chi \pi)$ as above. By \eqref{cond-twist}, we have $a(\pi)=a(\chi^{\pm 1} \pi)$, so that, by \eqref{e:cond1a}, also $a(\xi(\chi^{\pm 1}\circ\mathrm{Norm}_{E/F}))=a(\xi)$. In addition, $a(\xi) > 1$: indeed, otherwise, by \eqref{eqn:eps-supercuspidal-norm}, we would have $a(\xi) = 1$ and, since, by \eqref{e:cond1a},
\[
[\bF_{E} : \bF_F]a(\xi) + d_{E/F} = a(\pi) > 2,
\]
the quadratic extension $E/F$ would be ramified (so that $\bF_{E} \cong \bF_F$), we would have $p = 2$ because $d_{E/F} \le 1$ for odd $p$ (see \cite{Ser79}*{Chapter III, Section 6, Proposition 13}), and the simultaneous $\bF_{E} \cong \bF_2$ and $a(\xi) = 1$ would give a contradiction.
 Thus, \eqref{e:supercuspidal ctl(mu)} together with \Cref{p:maingausseps} and \eqref{eqn:eps-supercuspidal-twist} gives
\[
\val_p(c_{-a(\pi),\,\ell}(\chi)) \begin{cases}
	\ge \frac{1}{p-1}&\text{if }~\ell=1,\\
	=[\FF:\Fp](1 - \frac{\ell}{2})&\text{if }~\ell > 1.\\
	\end{cases}
\]
These bounds suffice in all cases with $a(\pi)>2$ and $\ell<\frac{a(\pi)}{2}$ except when $p = 2$ with $a(\pi) > 6$ and $\ell = 3$, when instead we seek to show that $\val_2(W_{\pi,\, \psi}(g_{-a(\pi),\,\ell,\, v})) \ge 0$ and  bounding each $\val_p(c_{t,\,\ell}(\chi))$ does not suffice. Instead, in the notation of \S\ref{s:type1b}, in this case \eqref{eq_hazel-recall} and \eqref{e:supercuspidal ctl(mu)} give 
\be \label{e:type1basicidspeciall3}
\tst W_{\pi,\,\psi}(g_{-a(\pi),\,3,\,v}) =\frac 1{2^{1/2}}\left( \eps(\frac12, \beta_3) \eps(\frac12, \beta_3\pi) \beta_3(v) + \eps(\frac12, \beta_2\beta_3) \eps(\frac12, \beta_2 \beta_3\pi) (\beta_2\beta_3)(v)\right).
\ee
Since $\beta_2^2 = \beta_3^2 = \mathbf{1}$, \Cref{l:gausssums2} and \eqref{eqn:eps-product} then give the sufficient $W_{\pi,\,\psi}(g_{-a(\pi),\,3,\,v}) \in \{\pm\frac { 1 + i}{2^{1/2}}, \pm\frac { 1 - i}{2^{1/2}}\}$. 

We turn to the remaining case when $a(\pi)>2$ with $\ell=\frac{a(\pi)}{2}$, and $\chi\in\Xc_{\ell}$ with $t = -a(\chi\pi)$ as above. If $a(\chi\i \pi) > 2$ (for instance, if $p$ is odd, see \Cref{odd-supercusp-cond}), then, as above, \eqref{e:cond1a} gives $a(\xi(\chi\i \circ \mathrm{Norm}_{E/F})) > 1$, to the effect that, by \eqref{eqn:eps-supercuspidal-twist}, \eqref{e:supercuspidal ctl(mu)}, and \Cref{p:maingausseps}~\ref{MGE-ii},
\be \label{1a-cases}
\tst \val_p(c_{t,\,\frac{a(\pi)}{2}}(\chi))
=[\FF:\Fp](1 - \frac{a(\pi)}{4}).
\ee
If, in contrast, $a(\chi\i \pi) = 2$, then $p = 2$, \Cref{lem:conductors} and \S\ref{s:type1b} give $\chi^2 = \mathbf{1}$ and so also $\omega_{\chi\i \pi} = \mathbf{1}$, and \eqref{1a-cases} follows from \eqref{eqn:eps-product}, \eqref{e:supercuspidal ctl(mu)}, and \Cref{p:maingausseps}~\ref{MGE-ii}. 

The equality \eqref{1a-cases} suffices for the desired bounds unless $p = 2$ and $a(\pi) \in \{6,8\}$, when we seek to show the additional bound $\val_2(W_{\pi,\, \psi}(g_{-a(\pi)+1,\,\frac{a(\pi)}{2},\, v})) \ge 0$.
In this final case, by \Cref{lem:conductors} and \eqref{cond-twist}, the minimal conductor twist $\pi_0$ of $\pi \simeq \chi_0 \pi_0$ satisfies  $a(\pi_0)\le a(\pi)-1$ and $a(\chi_0) = \frac{a(\pi)}{2}$. Moreover, we may assume that $a(\pi_0) = a(\pi) - 1$ because otherwise $W_{\pi,\, \psi}(g_{-a(\pi)+1,\,\frac{a(\pi)}2,\, v})=0$ by Proposition \ref{l:vanishing of Whittaker newform}. Then $E/\Q_2$ is ramified by \cite[Proposition~3.5]{Tun78} and, for any $\chi \in \fX_{\frac{a(\pi)}{2}}$,  
we have $a(\chi\chi_0)\le a(\chi_0) -1=\frac{a(\pi)}{2} - 1<\frac{a(\pi_0)}{2}$, so also $a(\chi\pi) =  a((\chi\chi_0)\pi_0)=a(\pi_0) = a(\pi)-1$ (see \eqref{cond-twist}). Consequently, by \eqref{eq_hazel-recall} and \eqref{e:supercuspidal ctl(mu)},
\be\label{e:type1basicidspecial}
\tst W_{\pi,\,\psi}(g_{-a(\pi)+1,\,\frac{a(\pi)}{2},\,v}) = 2^{1-\frac{a(\pi)}{4}}\sum_{\chi \in \Xc_{\Q_2,\,\frac{a(\pi)}2}} \eps(\frac12, \chi) \eps(\frac12, \chi^{-1}\pi) \chi(v).
\ee
If $a(\pi)=6$, then, as after \eqref{e:type1basicidspeciall3}, \Cref{l:gausssums2} gives the sufficient $W_{\pi,\,\psi}(g_{-5,\,3,\,v}) \in \{\pm\frac { 1 + i}{2^{1/2}}, \pm\frac { 1 - i}{2^{1/2}}\}$.
If $a(\pi)=8$, then, 
letting $\beta_4 \in \Xc_{\Q_2,\, 4}$ be nonquadratic  with $\beta_4(-1)=-1$, we have
\[
\Xc_{\Q_2,\,4} = \{\beta_4, \beta_2\beta_4 , \beta_4^{-1}, \beta_2\beta_4^{-1}\}, \qxq{with} \beta_2 \in \Xc_{\Q_2,\, 2}, \q \beta_2(-1) = -1 \qx{as  in \S\ref{s:type1b}.}
\]
In this notation, \eqref{eqn:eps-product} gives $\eps(\frac12, \beta_4^{-1}\pi)\eps(\frac12, \beta_4\pi) = 1$ and $\eps(\frac12, \beta_2\beta_4^{-1}\pi)\eps(\frac12, \beta_2\beta_4\pi) = 1 $, so, with
\[
\tst x \ce \eps(\frac12, \beta_4) \eps(\frac12, \beta_4^{-1}\pi) \beta_4(v) \qxq{and} x'\ce \eps(\frac12, \beta_2\beta_4) \eps(\frac12, \beta_2\beta_4^{-1}\pi) (\beta_2\beta_4)(v),
\]
by \eqref{e:epsduality} and \eqref{e:type1basicidspecial}, we have
\be \label{e:specialcasesuper1}
\tst W_{\pi,\,\psi}(g_{-7,\,4,\,v}) = \frac 12\left(x - x^{-1} + x' + x'^{-1} \right).
\ee
The characters $\beta_4\i$ and $\beta_2\beta_4\i$ agree on $1 + 4 \Z_2$, so they satisfy \Cref{l:vunit}~\ref{item: mult -> additive, even case} with the \emph{same} $u \in \bZ_2^\times$. Thus, \Cref{l:evaluate gauss sums with a(mu)>1}~\ref{EG-a} gives $\Ga_\psi(\frac 1{16}, \beta_4\i) = \pm \Ga_\psi(\frac 1{16}, \beta_2\beta_4\i)$, so that, by \eqref{e:gausseps}, also
\be \label{just-sign}
\tst \eps(\frac12, \beta_4) = \pm  \eps(\frac12, \beta_2\beta_4), \qx{where, by \Cref{p:maingausseps}~\ref{MGE-ii}, both sides are roots of unity.}
\ee
By \S\ref{s:repclassification}, the representations $\beta_4^{-1} \pi$ and $\beta_2\beta_4^{-1} \pi$ of conductor exponent $7$ (see before \eqref{e:type1basicidspecial}) are dihedral supercuspidal associated to $\xi(\beta_4\i \circ \mathrm{Norm}_{E/\bQ_2})$ and $\xi(\beta_2\beta_4\i \circ \mathrm{Norm}_{E/\bQ_2})$, respectively. Thus, since $E/\bQ_2$ is ramified quadratic, and hence $d_{E/\bQ_2} \in \{ 2, 3\}$, we obtain from \eqref{e:cond1a} that
\[
a(\xi(\beta_4\i \circ \mathrm{Norm}_{E/\bQ_2})) = a(\xi(\beta_2\beta_4\i \circ \mathrm{Norm}_{E/\bQ_2})) \in \{4,5\}.
\]
Since these two characters agree on $1+ \varpi_{E}^2\cO_{E} = 1+ 2\cO_{E}$, we conclude  as in \eqref{just-sign}, but now also using \eqref{eqn:eps-supercuspidal-twist} (with \eqref{e:epspsi}) and the odd conductor exponent cases of \Cref{l:vunit,l:evaluate gauss sums with a(mu)>1}, that  
\[
\tst \eps(\frac12, \beta_4^{-1}\pi)  = \pm  \eps(\frac12, \beta_2\beta_4^{-1} \pi), \qxq{where both sides are roots of unity.}
\]
Thus, $x$ and $x'$ are roots of unity, $x = \pm x'$, and \eqref{e:specialcasesuper1} gives
\[
W_{\pi,\,\psi}(g_{-7,\,4,\,v})\in \{x, -x^{-1}\}, \qxq{so also the sought} \val_2(W_{\pi,\,\psi}(g_{-7,\,4,\,v})) \ge 0.
\]
\textbf{The case when $\pi$ is of Type 3.}
Such a $\pi$ is $\mu\St$ for a ramified character $\mu$ with $\mu^2=\mathbf{1}$, and $a(\pi) = 2a(\mu)$.
We twist by the unramified quadratic character 
if needed to assume that $\mu(\varpi_F)=1$: by \eqref{cond-twist} and \eqref{lem:twist by unramified finite order char}, this changes neither $a(\pi)$ nor $\val_p(W_{\pi,\,\psi}(g_{t,\,\ell,\,v}))$.
By \cite{Ass19}*{Lemma 2.1} and \eqref{zeta-values}, for $1 \le \ell \le \frac{a(\pi)}{2}$ and $\chi \in \fX_{\le \ell}$, 
\[
c_{t,\,\ell}(\chi) = \begin{cases}
\eps(\frac 12, \chi\i \mu)^2 \Ga_\psi(\varpi_F^{-\ell}, \chi\i) &\text{if }~\chi \neq \mu,~t = -2a(\mu\chi), \\
\frac 1{q_F}\Ga_\psi(\varpi_F^{-\ell}, \mu\i) &\text{if }~\chi = \mu,~t = -2, \\
-\frac {q_F^2 - 1}{q_F^{3 + t}}\Ga_\psi(\varpi_F^{-\ell}, \mu\i) &\text{if }~\chi = \mu,~t \ge -1, \\
0 & \text{otherwise}.
\end{cases}
\]
By then using the formula \eqref{e:gausseps} for $\Ga_\psi(\varpi_F^{-\ell}, \chi\i)$ together with \eqref{e:epsduality}, we obtain
\be\ba \lab{Type-3-c}
c_{t,\,\ell}(\chi) = \begin{cases}
\frac{1}{q_F-1}q_F^{1-\ell/2}\eps(\frac 12,\chi^{-1}\mu)^2\eps(\frac12, \chi) &\text{if }~ \chi\notin \{\mathbf{1}, \mu\},~ t=-2a(\chi\mu), ~ \ell = a(\chi),\\
-\frac{1}{q_F-1}\mu(-1)&\text{if }~\chi = \mathbf{1},~ t=-2a(\mu), ~\ell=1, \\
\frac{1}{q_F-1}\qF^{-\ell/2}\eps(\frac12, \mu) &\text{if }~\chi =\mu,~ t=-2, ~ \ell = a(\mu),\\
-(\qF + 1)\qF^{-(t+2 + \ell/2)}\eps(\frac12, \mu)&\text{if }~\chi=\mu,~ t\geq -1, ~ \ell = a(\mu),\\
0 & \text{otherwise}.
\end{cases}
\ea\ee
We begin with the case of an odd $p$, when necessarily $a(\mu)=1$, so that $a(\pi)=2$, 
and $\ell = 1$. Since $\mu^2 = 1$, from \eqref{e:schiquadratic} and \eqref{e:schiprop1} we obtain $\frac{p-1}2 \mid  s(\chi^{-1}\mu) + s(\chi)$, so, for $\chi\notin \{\mathbf{1}, \mu\}$, also $2s(\chi^{-1}\mu) +   s(\chi) \ge \frac{p-1}{2} + 1$. Since $\mu^2 = 1$, \Cref{p:maingausseps} and \eqref{Type-3-c} then give the sufficient
\be \label{e:type3arg}
\val_p(c_{t,\,\ell}(\chi)) \ge \begin{cases}
-[\FF:\Fp] + \frac12 + \frac{1}{p - 1} &\text{if }~ \chi\notin \{\mathbf{1}, \mu\},~t=-2,\\
0 &\text{if }~\chi = \mathbf{1},~t=-2, \\
-[\FF:\Fp](t + \frac 52) &\text{if }~\chi=\mu,~t\ge-2, \\
\infty &\x{otherwise.}
\end{cases}
\ee
For the remaining $F=\Q_2$, in the notation of \S\ref{s:type1b}, we have $\mu \in \{\beta_2, \beta_3, \beta_2\beta_3\}$, so 
$a(\pi) = 4$ if $\mu=\beta_2$, and $a(\pi)=6$ if $\mu \in \{\beta_3, \beta_2\beta_3\}$.
It then suffices to use \eqref{eq_hazel-recall}, the values \eqref{Type-3-c}, and \Cref{l:gausssums2} to compute the only possible nonzero $W_{\pi,\,\psi}(g_{t,\,\ell,\,v})$ for $1 \le \ell \le a(\mu)$:
\[
\ba
W_{\pi,\,\psi}(g_{t,\,1,\,v}) &=\tst  -\mu(-1)  \in \{ \pm 1\} \quad\text{if }~t=-a(\pi),\\
W_{\pi,\,\psi}(g_{t,\,2,\,v}) &= \begin{cases}
\frac 12 \eps(\frac 12, \beta_2) \beta_2(v) \in\{
\pm \frac i2\} &\text{if }~\mu=\beta_2,~t=-2,\\
-\frac 3{2^{t + 3}} \eps(\frac 12, \beta_2) \beta_2(v) \in \{
\pm \frac {3i}{2^{t+3}}\} &\text{if }~\mu=\beta_2,~t\ge -1,\\
\eps(\frac 12, \beta_2 \mu)^2 \eps(\frac 12, \beta_2) \beta_2(v) \in  \{ \pm i\} &\text{if }~\mu\in \{\beta_{3}, \beta_2 \beta_3\},~t=-6,
\end{cases}
\\
W_{\pi,\,\psi}(g_{t,\,3,\,v}) &= \begin{cases}
\frac 1{2^{1/2}} \eps(\frac 12, \beta_2)^2 \eps(\frac 12, \beta_2\mu) (\beta_2\mu)(v) \in \{ \pm \frac{1}{2^{1/2}}, \pm \frac{i}{2^{1/2}}\} &\text{if }~\mu\in \{\beta_3, \beta_2\beta_{3}\},~t=-4,\\
\frac 1{2^{3/2}} \eps(\frac 12, \mu) \mu(v) \in  \{ \pm \frac{1}{2^{3/2}}, \pm \frac{i}{2^{3/2}}\} &\text{if }~\mu\in \{\beta_3, \beta_2\beta_{3}\},~t =-2,\\
-\frac {3}{2^{t + 7/2}} \eps(\frac 12, \mu) \mu(v) \in \{ \pm \frac{3}{2^{t + 7/2}}, \pm \frac{3i}{2^{t+ 7/2}} \} &\text{if }~\mu\in \{\beta_3, \beta_2\beta_{3}\},~t\ge -1.
\end{cases}
\ea
\]
\textbf{The case when $\pi$ is of Type 4.} 
Such a $\pi$ is $\mu\absF{\,\cdot\,}^{\sigma} \boxplus \mu\absF{\,\cdot\,}^{-\sigma}$ for $\sigma \neq \pm \frac 12$ and a ramified $\mu \in \fX$ with $\mu^2 = \mathbf{1}$, and $a(\pi)=2a(\mu)$. 
By \cite[Lemma 2.2]{Ass19} and \eqref{zeta-values}, for $1 \le \ell \le \frac{a(\pi)}{2}$ and $\chi \in \fX_{\le \ell}$,
\[
c_{t,\,\ell}(\chi) =
\begin{cases}
\eps(\frac 12, \chi\i\mu \abs{\,\cdot\,}_F^{-\sigma})\eps(\frac 12, \chi\i\mu \abs{\,\cdot\,}_F^{\sigma}) \Ga_\psi(\varpi_F^{-\ell}, \chi\i) &\text{if }~\chi\neq  \mu,~t=-2a(\chi\mu),\\
\frac{1}{\qF} \Ga_\psi(\varpi_F^{-\ell}, \mu) &\text{if }~\chi = \mu,~t=-2, \\
-\frac{q_F - 1}{\qF^{3/2}} \Ga_\psi(\varpi_F^{-\ell}, \mu)(\qF^{-\sigma}+\qF^{\sigma}) &\text{if }~\chi=\mu ,~t=-1,\\
-\frac{q_F - 1}{\qF^{2 + t/2}} \Ga_\psi(\varpi_F^{-\ell}, \mu) \ppp{
\frac{1}{\qF^{\sigma(t+2)}} +\qF^{\sigma(t+2)}  -\sum_{m=0}^t \frac{\qF - 1}{\qF^{\sigma(2m -t)}}
}
 &\text{if }~\chi = \mu,~t\geq 0,\\
0 & \text{otherwise.}
\end{cases}
\]
By then using the formula \eqref{e:gausseps} for $\Ga_\psi(\varpi_F^{-\ell}, \chi\i)$ and the formulas \eqref{e:epsunramtwist}--\eqref{e:epsduality}, we obtain
\[
c_{t,\,\ell}(\chi) =
\begin{cases}
\frac{1}{\qF-1}q_F^{1-\ell/2}\eps(\frac 12,\chi^{-1}\mu)^{2} \eps(\frac12, \chi)  &\text{if }~\chi\notin \{\mathbf{1}, \mu\},~t=-2a(\chi\mu),~\ell=a(\chi),\\
-\frac{1}{\qF-1}\mu(-1) &\text{if }~\chi = \mathbf{1},~ t=-2a(\mu), ~ \ell=1, \\
\frac{1}{\qF-1}\qF^{-\ell/2}  \eps(\frac12, \mu)  &\text{if }~\chi= \mu ,~t=-2,~\ell=a(\mu),\\
-q_F^{-(\ell + 1)/2}\eps(\frac12, \mu)(\qF^{-\sigma}+\qF^{\sigma}) &\text{if }~\chi=\mu ,~t=-1,~\ell=a(\mu),\\
\frac {-\eps(\frac12,\, \mu)}{\qF^{(t+\ell + 2)/2}} \ppp{
\frac{1}{\qF^{\sigma(t+2)}} +\qF^{\sigma(t+2)}  -\sum_{m=0}^t \frac{\qF - 1}{\qF^{\sigma(2m -t)}}
}
 &\text{if }~\chi = \mu,~t\geq 0,~\ell=a(\mu),\\
0 & \text{otherwise.}
\end{cases}
\]
If $p \neq 2$, then $a(\mu)=1$, so $a(\pi)=2$ and $\ell=1$, and, similarly to \eqref{e:type3arg}, we get the sufficient
\[
\val_p(c_{t,\,\ell}(\chi)) \ge \begin{cases}
-[\FF:\Fp] + \frac12 + \frac{1}{p - 1}  &\text{if }~\chi\notin \{\mathbf{1}, \mu\},~t = -2,\\
0 &\text{if }~\chi = \mathbf{1},~t = -2, \\
 -\frac{t + 3}{2} [\bF_F : \Fp] - (t + 2)\abs{\val_p(q_F^\sigma)} &\text{if }~\chi = \mu,~t\geq -2,\\
 \infty &\x{otherwise.}
\end{cases}
\]
In the remaining case $F=\Q_2$, similarly to Type 3, in the notation of \S\ref{s:type1b}, we have $\mu \in \{\beta_2, \beta_3, \beta_2\beta_3\}$, and we combine the above formulas for the $c_{t,\, \ell}(\chi)$ with \eqref{eq_hazel-recall} and \Cref{l:gausssums2} to find the following sufficient formulas for the only possible nonzero $W_{\pi,\,\psi}(g_{t,\,\ell,\,v})$ in the range in question:
\[
\ba
W_{\pi,\,\psi}(g_{t,\,1,\,v}) &\in \{ \pm 1\} \qx{if } ~t=-a(\pi),\\
W_{\pi,\,\psi}(g_{t,\,2,\,v}) &\in \begin{cases}  \{\pm \frac i2\}
&\text{if }~\mu=\beta_2,~t=-2,\\
\{\pm \frac{i}{2^{3/2}}(2^{-\sigma}+2^\sigma)\} &\text{if }~\mu=\beta_2,~t= -1,\\
\left\{\pm \frac{i}{2^{2+t/2}}\left(\frac 1{2^{\sigma(t+2)}}+2^{\sigma(t+2)}-\tst \sum_{m=0}^t \frac 1{2^{\sigma(2m - t)}}\right)\right\}&\text{if }~\mu=\beta_2,~t\ge 0,\\
\{\pm i\} &\text{if }~\mu\in \{\beta_3, \beta_2\beta_3\},~t=-6,
\end{cases}
\\
W_{\pi,\,\psi}(g_{t,\,3,\,v}) &\in \begin{cases}\{ \pm \frac 1{2^{1/2}}, \pm \frac i{2^{1/2}} \}
&\text{if }~\mu\in \{\beta_3, \beta_2\beta_3\},~t=-4,\\
\{ \pm \frac 1{2^{3/2}}, \pm \frac i{2^{3/2}} \} &\text{if }~\mu\in \{\beta_3, \beta_2\beta_3\},~t =-2,\\
\{ \pm\frac 14 (\frac 1 {2^{\sigma}}+2^{\sigma}), \pm\frac i4 (\frac 1 {2^{\sigma}}+2^{\sigma})\} &\text{if }~\mu\in \{\beta_3, \beta_2\beta_3\},~t= -1,\\
 \frac{1}{2^{(t+5)/2}}(\frac 1{2^{\sigma(t+2)}} + 2^{\sigma(t+2)} - \sum_{m=0}^t \frac{1}{2^{\sigma(2m - t)}})\cdot \{ \pm 1, \pm i \}&\text{if }~\mu\in \{\beta_3, \beta_2\beta_3\},~t\ge 0.
\end{cases}
\ea
\]
\textbf{The case when $\pi$ is of Type 5.} 
Such a $\pi$ is $\mu\absF{\,\cdot\,}^{\sigma} \boxplus \mu^{-1}\absF{\,\cdot\,}^{-\sigma}$ for a ramified $\mu \in \fX$ with $\mu^2 \neq \mathbf{1}$, and $a(\pi)=2 a(\mu)$. 
By \cite[Lemma 2.2]{Ass19},\footnote{We corrected a slight mistake in \cite[Lemma 2.2]{Ass19} (see also \cite{Ass19e}): when $\chi_1|_{\cO^\times} \neq \chi_2|_{\cO^\times}$, in the case ``if $a(\mu\chi_j) \neq a(\mu\chi_i) = 0$ for $\{ j, i\} = \{1, 2\}$ and $t \ge -a(\mu\chi_j)$'' of the formula for $c_{t,l}(\mu)$ one should instead have  ``$\zeta_F(1)^{-2}q^{-\frac{t}{2}}\chi_i(\varpi^{t + a(\mu\chi_j)})\chi_j(\varpi^{-a(\mu\chi_j)})G(\varpi^{-a(\mu\chi_j)}, \mu\chi_j)G(\varpi^{-l}, \mu\i)$.''} \eqref{zeta-values}, and \eqref{e:epsunramtwist}, for $1 \le \ell \le \frac{a(\pi)}{2}$ and $\chi \in \Xc_{\le \ell}$, 
\begin{equation*}
c_{t,\,\ell}(\chi) =
\begin{cases}
	\frac{\eps(\frac 12,\, \chi^{-1}\mu^{-1})\epsilon(\frac 12,\, \chi^{-1}\mu)}{\qF^{\sigma(a(\chi\mu^{-1}) - a(\chi\mu))}}\Ga_\psi(\varpi^{-\ell},\chi^{-1}) &\text{if }~\chi\neq \{ \mu^{\pm1}\},~t=-a(\chi\mu)-a(\chi\mu^{-1}),\msk\\
	-\qF^{-\frac{1}2 \pm\sigma (a(\mu^{2})-1)} \eps(\frac 12,\mu^{\mp 2})\Ga_\psi(\varpi^{-\ell},\mu^{\mp1}) &\text{if }~\chi= \mu^{\pm1} ,~t=-a(\mu^2)-1, \msk \\
	\frac{(\qF-1)^2}{\qF^{2+\frac{t}{2}\mp\sigma (t+2a(\mu^2))}}  \Ga_\psi(\varpi^{-a(\mu^2)},\mu^{\pm2})\Ga_\psi(\varpi^{-\ell},\mu^{\mp1}) & \text{if }~\chi=\mu^{\pm 1} ,~t\ge -a(\mu^2), \msk\\
	0 \qq \text{otherwise.}
\end{cases}
\end{equation*}
By then using the formula \eqref{e:gausseps} for the appearing Gauss sums as well as \eqref{e:epsduality}, we obtain
\[
c_{t,\,\ell}(\chi) =
\begin{cases}
-\frac{\mu(-1)}{q_F-1}  &\text{if }~\chi = \mathbf{1},~ t=-2a(\mu),~ \ell=1, \msk \\
\frac{\eps(\frac 12,\,\chi^{-1}\mu^{-1})\eps(\frac 12,\,\chi^{-1}\mu)\eps(\frac12,\, \chi)}{(q_F-1)\qF^{\ell/2 - 1 + \sigma(a(\chi\mu^{-1}) - a(\chi\mu))}} & \text{if }~\chi\notin \{\mathbf{1}, \mu^{\pm1}\},~t=-a(\chi\mu)-a(\chi\mu^{-1}),~\ell=a(\chi),\msk\\
-\frac {\eps(\frac 12,\,\mu^{\mp 2})\eps(\frac12,\, \mu^{\pm 1})} {(q_F-1)\qF^{(\ell - 1)/2 \pm\sigma (1 - a(\mu^{2}))}}  &\text{if }~\chi= \mu^{\pm1} ,~t=-a(\mu^2)-1,~\ell=a(\mu), \msk \\
\frac{\eps(\frac 12,\, \mu^{\mp 2})\eps(\frac12,\, \mu^{\pm 1})}{\qF^{(t+\ell+a(\mu^{2}))/2 \mp\sigma(t+2 a(\mu^{ 2}))}}   & \text{if }~\chi=\mu^{\pm 1} ,~t\ge -a(\mu^2) ,~\ell=a(\mu), \msk\\
0  &\text{otherwise.}
\end{cases}
\]
We begin with $a(\pi)=2$, when $\ell=1$ and $a(\mu)=1$, so $p$ is odd and, since $\mu^2 \ne \mathbf{1}$, also $a(\mu^2)=1$. By \eqref{e:schiprop1}, both $s(\chi^{-1}\mu^{-1}) + s(\chi^{-1}\mu) + 2s(\chi)$ and $s(\mu^{\mp2}) + 2 s(\mu^{\pm1})$ are divisible by $p - 1$, so
\[
\tst s(\chi^{-1}\mu^{-1}) + s(\chi^{-1}\mu) + s(\chi) \ge  \frac{p-1}{2} + 1 \qxq{and}  s(\mu^{\mp2}) +  s(\mu^{\pm1}) \ge \frac{p-1}{2} + 1.
\]
These inequalities, the formulas for the $c_{t,\,\ell}(\chi)$, and \Cref{p:maingausseps}~\ref{MGE-i} imply the sufficient bound
\[
\tst \val_p(c_{t,\,1}(\chi)) \ge
-\frac{4+t}{2}[\FF:\Fp]+ \frac12+\frac{1}{p-1}-(t+2)|\val_p(\qF^\sigma)|  \qxq{for} \chi\in\Xc_{\le 1}.
\]
In the case $a(\pi)>2$, that is, $a(\mu)>1$, we begin with $1\le \ell <a(\mu)=\frac{a(\pi)}{2}$, so that $t=-a(\pi)$ by \ref{ass-1}.  In this case, the formulas above for the $c_{t,\,\ell}(\chi)$ and \Cref{p:maingausseps} give
\[
\val_p(c_{-a(\pi),\,\ell}(\chi)) = \begin{cases}
\frac{s(\chi\i)}{p - 1} &\text{if }~\ell=1,\\
 [\FF:\Fp](1 - \frac{\ell}{2})&\text{if }~1<\ell<\frac{a(\pi)}{2},
\end{cases}
\]
which reduces us further to the last setting of \eqref{e:p2mainlocal}, in which, in addition, $F=\Q_2$ and $\ell=3$. In the notation of \S\ref{s:type1b}, the formulas above  for the $c_{t,\,\ell}(\chi)$ and \eqref{eq_hazel-recall} then give
\[
\tst W_{\pi,\,\psi}(g_{-a(\pi),\,3,\,v}) =\frac{1}{2^{1/2}}\sum_{\beta \in \{\beta_3,\, \beta_2\beta_3\} } \ppp{\eps(\frac12, \beta) \eps(\frac12, \beta\mu) \eps(\frac12, \beta\mu^{-1}) \beta(v)}.
\]
Since $\eps(\frac12, \beta\mu) \eps(\frac12, \beta\mu^{-1}) = (\beta\mu)(-1) \in \{ \pm 1\}$ by \eqref{e:epsduality}, \Cref{l:gausssums2} gives the sufficient
\[
\tst W_{\pi,\,\psi}(g_{-a(\pi),\,3,\,v}) \in \{ \pm \frac{1 + i}{2^{1/2}}, \pm \frac{1 - i}{2^{1/2}} \}.
\]
The remaining case is $a(\pi)>2$, that is, $a(\mu)>1$, with $\ell=\frac{a(\pi)}{2}= a(\mu)$, in which the above formulas for the $c_{t,\,\ell}(\chi)$ allow us to restrict to $\chi$ with $a(\chi)=a(\mu)$. For odd $p$,  since $a(\mu) > 1$, we have $a(\mu^2) = a(\mu)$, so if also $a(\chi\mu^{\pm 1})<a(\mu)$, then $a(\chi\mu^{\mp 1})= a(\chi\mu^{\pm 1}\cdot \mu^{\mp 2}) = a(\mu)$. Thus, for odd $p$, the above formulas for the $c_{t,\,\ell}(\chi)$ combine with \Cref{p:maingausseps} to give the sufficient bounds
\[
\val_p(c_{t,\,a(\mu)}(\chi)) \ge
\begin{cases}
-[\FF:\Fp](\frac{a(\mu)}{2} - 1)-\left|(a(\chi\mu^{-1})-a(\chi\mu))\val_p(\qF^{\sigma})\right| \\   \mkern248mu\text{if }~a(\chi\mu), a(\chi\mu^{-1})>1,~t=-a(\chi\mu)-a(\chi\mu^{-1}), \msk\\
\frac{[\FF:\Fp](1 - a(\mu))}{2}+\frac{s(\chi\mu^{\pm 1})}{p-1}-\left|(a(\mu)-1)\val_p(\qF^{\sigma})\right| \mkern8mu \text{if }~a(\chi\mu^{\pm 1})=1,~t=-a(\mu)-1, \\
-[\FF:\Fp](\frac{t}{2}+a(\mu)) -\left|(t+ 2a(\mu))\val_p(\qF^\sigma)\right|
 \mkern54mu \text{if }~\chi = \mu^{\pm 1},~t\geq -a(\mu) - 1. 
\end{cases}
\]
We are left with $F=\Q_2$, when $\mu^2\neq \mathbf 1$ gives $a(\mu)\ge 4$ (see \S\ref{s:type1b}), so $a(\pi) \ge 8$ and $a(\mu^2)=a(\mu)-1$. If $\chi \notin \{\mu^{\pm1}\}$, then, since $a(\chi) = a(\mu)$, exactly one of $a(\chi\mu)$ and $a(\chi\mu^{-1})$ equals $a(\mu)-1$, and the other one lies in $[2, a(\mu)-2]$ (compare with \cite[Lemma 2.2]{CS18}). Thus, for such $\chi$ we have $- a(\chi\mu)-a(\chi\mu^{-1}) \le -a(\mu)-1$ and, furthermore, $\left|a(\chi\mu^{-1})-a(\chi\mu)\right| = 2a(\mu)-2- a(\chi\mu)-a(\chi\mu^{-1})$. Thus, the formulas above for the $c_{t,\,\ell}(\chi)$ and
\Cref{p:maingausseps} give the sufficient final bounds
\[
\ \ \val_2(c_{t,\,a(\mu)}(\chi)) \ge
\begin{cases}
1 - \frac{a(\mu)}{2}-(t + 2a(\mu)-2)|\val_2(2^{\sigma})|  &\text{if }~\chi \neq \mu^{\pm 1},~t=-a(\chi\mu)-a(\chi\mu^{-1}), \\
\frac{1 - a(\mu)}{2}-(a(\mu) - 2)|\val_2(2^\sigma)|
 &\text{if }~\chi = \mu^{\pm 1},~t = -a(\mu),\\
 \frac{1 - t}{2} - a(\mu)-(t+ 2a(\mu) - 2)|\val_2(2^\sigma)|
 &\text{if }~\chi = \mu^{\pm 1},~t\geq -a(\mu)+1. \mkern40mu\QED
\end{cases}
\]

\section{$p$-adic valuations of Fourier coefficients at cusps}
\label{s:global p-adic valuations}

We turn to global consequences of the local analysis of the preceding section, more precisely, to \Cref{main-estimates} that $p$-adically bounds the Fourier expansions at cusps  of holomorphic newforms on $\Gamma_0(N)$. For this, we begin by reviewing notions that concern cusps and Fourier expansions. 

\bpp[Cusps] \label{pp:cusps}
The group $\SL_2(\mathbb{R})$ acts by M\"{o}bius transformations on the extended upper half-plane
\[
\mathfrak{H}^* \ce \mathfrak{H}  \cup \bP^1(\bQ) \qxq{with} \mathfrak{H} \ce \{ z \in \bC \colon \im(z) > 0\}
\]
 and, for an $N \ge 1$, the set of \emph{cusps} of $\Gamma_0(N)$ is the orbit space
\[
\cusps(\Gamma_0(N)) \ce \left(\Gamma_0(N) \cap \SL_2(\Z) \right) \bs \bP^1(\bQ).
\]
Since $\SL_2(\Z)$ acts transitively on $\bP^1(\bQ)$ and the stabilizer of $\infty \in \bP^1(\bQ)$ is $\{\pm
  \abcd 1 *\ 1\}$, we have 
\[
\cusps(\Gamma_0(N)) \cong (\Gamma_0(N) \cap \SL_2(\Z))\backslash
\SL_2(\Z)/ \{\pm \abcd    1 * \ 1\},
\]
and the latter is the global analogue of the local double coset set $ZU\bs \GL_2(F) / K_1(n)$ of \S\ref{elements-g}. Via the complex uniformization of $X_0(N)$, that is, via the identification of Riemann surfaces
\be \label{uniformization}
X_0(N)(\bC) \cong (\Gamma_0(N) \cap \SL_2(\bZ))\backslash\fH^*
\ee
(see \cite{Roh97}*{Section 1.10, Proposition 7}), the cusps are the complement of the elliptic curve locus of $X_0(N)_\bC$.


Concretely, each cusp $\fc$ of $\Gamma_0(N)$ is represented by an $\frac mL \in \bQ \subset \bP^1(\bQ)$ with $\gcd(m,N)=1$ and a uniquely determined \emph{denominator} $L\mid N$ of $\fc$ (compare with \cite{DS05}*{Proposition 3.8.3}). For $\fc = \abcd abcd \infty$, we have $L = \gcd(c, N)$.
The cusp $\infty$ is the unique one of denominator $N$ and there are $\phi(\gcd(L, \frac NL))$ cusps of denominator $L$ (see \emph{loc.~cit.}). The \emph{width} of a cusp $\fc$ is the smallest $w(\fc) \in \bZ_{> 0}$ such that $\gamma \abcd 1{w(\fc)}01 \gamma\i \in \Gamma_0(N)$ for any fixed $\gamma \in \SL_2(\bZ)$ with $\fc = \gamma\infty$, explicitly, 
\[
\tst w(\fc) = \frac{N}{\gcd(L^2,\,N)}.
\]
\epp

\bpp[Fourier expansions] \label{pp:q-expansions}
For a function $f\colon \fH \ra \bC$, a $k \in \bZ_{> 0}$, and a $\gamma = \abcd abcd \in \GL_2^+(\R)$,
\[
\tst \xq{the function} f|_k \gamma \colon \fH \ra \bC \qxq{is defined by} (f|_k \gamma)(z) \ce \det(\gamma)^{\frac{k}2}\frac{1}{(cz + d)^k}f(\frac{az+b}{cz+d}).
\]
If the ideal $\{h \in \Z: f = f|_k{\abcd 1 h \ 1} \} \subset \Z$ is nonzero, generated by a unique $w \in \bZ_{> 0}$, then $f$ descends along the map $\fH \surjects \bC^\times$ given by $z \mapsto e^{2\pi i z/w}$ to a function $f_0 \colon \bC^\times \ra \bC$. If then $f_0$ extends to a holomorphic function at $0$, then $f$ is \emph{holomorphic at $\infty$} and
we obtain its \emph{Fourier expansion at $\infty$}:
\be \label{eqn:FE-genl}
\tst f(z) = \sum_{n\geq 0} a_f(n)e^{\frac {2 \pi i n z}w}.
\ee
We say that such an $f$ is \emph{cuspidal at $\infty$} if $a_f(0) = 0$.

For a subgroup $\Gamma_1(N) \subset\Gamma \subset \GL_2(\wh{\bZ})$ and a $k \in \bZ_{> 0}$, a \emph{modular form} (resp.,~a \emph{cuspform}) of weight $k$ on $\Gamma$  is a holomorphic function $f\colon \fH \ra \bC$ such that both $f|_k \gamma = f$ for $\gamma \in \Gamma \cap \SL_2(\Z)$ and $f|_k \gamma'$ is holomorphic (resp.,~cuspidal) at $\infty$ for $\gamma' \in \SL_2(\bZ)$.
A cuspform $f$ on $\Gamma$ is \emph{normalized} if $a_f(1) = 1$. For instance, for $\Gamma = \Gamma_0(N)$, choosing $\gamma = \abcd {-1} \ \ {-1}$ gives $f(z) = (-1)^k f(z)$, so $k$ is even or $f = 0$.

For every modular form $f$ of weight $k$ on $\Gamma_0(N)$  and every cusp $\fc = \gamma \infty$ with $\gamma 
\in \SL_2(\bZ)$, we have $(f|_k \gamma)|_k (\begin{smallmatrix} 1 & {w(\fc)} \\  & 1\end{smallmatrix}) = f|_k \gamma$, so \eqref{eqn:FE-genl} gives the \emph{Fourier expansion of $f$ at $\fc$}:
\[
\tst (f|_k\gamma)(z) = \sum_{n\geq 0} a_f(n;\gamma)e^{\frac{2 \pi i n z}{w(\fc)}},
\]
which depends not only on $\fc$ but also on $\gamma$---explicitly, for any $\gamma' \in \SL_2(\bZ)$ with $\fc = \gamma'\infty$,
\[
a_f(n ; \gamma)= e^{\frac{2 \pi i nt}{w(\fc)}}a_f(n ; \gamma') \qxq{for some} t \in \bZ \qxq{that depends on} \gamma'^{-1} \gamma.
\]
In particular, for any isomorphism $\ov{\bQ}_p \simeq \bC$ and the resulting $p$-adic valuation $\val_p\colon \bC \ra \bQ \cup \{ \infty\}$,
\begin{equation} \label{def-val-cusp}
\tst \val_p(f|_\fc) \ce \inf_{n \ge 0}(\val_p(a_f(n; \gamma))) \qx{depends only on $f$ and $\fc$, and not on $\gamma$.
}
\end{equation}
\epp


\bpp[The representation $\pi_f$] \label{pif}
For a normalized newform $f$ on $\Gamma_1(N)$ (see \cite{Li75}*{page~294}),\footnote{Here and throughout the paper, a `newform' is implicitly assumed to be a (holomorphic) cuspform.} 
the Fourier coefficients $a_f(n)$ are algebraic integers that generate a number field $K_f$ (see, for instance, \cite{DI95}*{Corollary 12.4.5}). In particular, for a normalized newform $f$ on $\Gamma_0(N)$ and every prime $p$, we have $\val_p(f|_\infty) = 0$. For such an $f$, the Fourier coefficients $a_f(n; \gamma)$ at any cusp $\fc = \gamma \infty$ of denominator $L$  lie in $K_f(\zeta_{N/L})$ (see \cite[Theorem 7.6]{BN19}, which even exhibits the possibly smaller number field generated by the $a_f(n; \gamma)$), and to study them $p$-adically we will use the adelic viewpoint.

Namely, for a newform $f$ on $\Gamma_1(N)$, we let $\pi_f$ be the cuspidal, irreducible, admissible, automorphic $\GL_2(\bA_\bQ)$-representation spanned by the $\GL_2(\bA_\bQ)$-translates of the adelic newform associated to $f$ (see \cite{Gel75}*{Theorem 5.19}). 
 In the resulting factorization (compare with \cite{Fla79}*{Theorem 3})
\[
\tst \pi_f \cong \pi_{f,\, \infty} \tensor  \bigotimes'_{p < \infty }\pi_{f,\, p}
\]
each $\pi_{f,\, p}$ is an irreducible, admissible, infinite-dimensional 
representation of $\GL_2(\bQ_p)$ of conductor exponent $\val_p(N)$. If $f$ is on $\Gamma_0(N)$, then $\omega_{\pi_{f,\, p}} = \mathbf{1}$, and if also $\val_p(N) \ge 2$, then $\pi_{f,\, p}$ is of Type 1, 3, 4, or 5 in the classification of \uS\uref{s:repclassification}. In the last two cases, we have the following refinement.
\epp

\begin{lemma}\label{l:classi} \label{prop:properties of a_p}
For a prime $p$ and a newform $f$ of weight $k$ on $\Gamma_0(N)$ with $\val_p(N)\ge 2$,  
if the $\GL_2(\Q_p)$-representation $\pi_{f,\, p}$ is of Type {\upshape4} or {\upshape5}, that is, if
\[
\tst \pi_{f,\, p}\simeq \mu\abs{\,\cdot\,}_{\Q_p}^{\sigma} \boxplus \mu^{-1}\abs{\,\cdot\,}_{\Q_p}^{-\sigma} \qxq{for a ramified}  \mu \in \fX_{\Q_p} \qxq{such that} \sigma \neq \pm \frac 12 \qxq{when} \mu^2 = \mathbf1,
\]
then $\sigma \in i\R$ and $p^{ \pm {\sigma}+\frac{k-1}2} \in \overline{\Z}$, so that $\abs{\val_p(p^\sigma)} \le \frac{k-1}{2}$.
\end{lemma}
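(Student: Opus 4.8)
The plan is to deduce both assertions from the fact that, $f$ being a \emph{holomorphic} newform, the local component $\pi_{f,\, p}$ is tempered and the Frobenius eigenvalues of its Langlands parameter are $p$-Weil integers of weight $k - 1$; the lemma will then follow by inspection.

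First I would reduce to $k \ge 2$ (otherwise $f = 0$, since forms on $\Gamma_0(N)$ have positive even weight), so that $\pi_{f,\, \infty}$ is a weight-$k$ discrete series and $\pi_f$ corresponds to a classical normalized eigenform; then I would invoke Deligne's construction, for each prime $\ell \ne p$, of a Galois representation $\rho_f\colon \Gal(\ov{\bQ}/\bQ) \to \GL_2(\ov{\bQ}_\ell)$ together with his proof that $\rho_f$ is pure of weight $k - 1$ (the Ramanujan--Petersson conjecture for holomorphic newforms). Next, by the known local--global compatibility for $\GL_2/\bQ$, the Weil--Deligne representation underlying $\rho_f|_{W_{\bQ_p}}$ is the one matched with $\pi_{f,\, p}$ under local Langlands; for a Type~$4$ or $5$ representation $\pi_{f,\, p} \simeq \mu \abs{\,\cdot\,}_{\bQ_p}^{\sigma} \boxplus \mu^{-1}\abs{\,\cdot\,}_{\bQ_p}^{-\sigma}$ with $\mu$ ramified, this Weil--Deligne representation has trivial monodromy, and, with our normalization of local class field theory \up{geometric Frobenii map to uniformizers} and of the pair $(\mu, \sigma)$ in \S\uref{s:repclassification}, the two eigenvalues of a geometric Frobenius are $\mu(p)^{\pm 1}\, p^{\mp\sigma + \frac{k-1}{2}}$, where $\mu(p)$ is a root of unity because $\mu$ is of finite order.

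From here the conclusions are formal. Deligne's purity theorem says that these two eigenvalues are algebraic integers each of whose images under every embedding $\ov{\bQ} \hra \bC$ has absolute value $p^{\frac{k-1}{2}}$. Dividing out the root of unity $\mu(p)^{\pm 1}$, which is a unit of $\ov{\bZ}$, gives $p^{\pm\sigma + \frac{k-1}{2}} \in \ov{\bZ}$, the asserted integrality; comparing absolute values under a fixed embedding gives $p^{\pm\re(\sigma) + \frac{k-1}{2}} = p^{\frac{k-1}{2}}$, hence $\re(\sigma) = 0$, i.e.\ $\sigma \in i\bR$; and $\val_p\big(p^{\pm\sigma + \frac{k-1}{2}}\big) \ge 0$ for both signs gives $\abs{\val_p(p^\sigma)} \le \frac{k-1}{2}$.

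The main thing to get right---the only delicate point---is the invocation of the input theorems at the \emph{ramified} prime $p$: temperedness of $\pi_{f,\, p}$ at a prime dividing $N$ (equivalently, the Weil-number property of the Frobenius eigenvalues of $\rho_f|_{W_{\bQ_p}}$) comes from Deligne's purity theorem together with local--global compatibility for $\GL_2/\bQ$, and one must reconcile the normalization of $(\mu,\sigma)$ fixed in \S\uref{s:repclassification} with the arithmetic normalization on the Galois side so that the two geometric Frobenius eigenvalues really are $\mu(p)^{\pm 1}\, p^{\mp\sigma + \frac{k-1}{2}}$. Granting these identifications the argument is purely formal; alternatively, the integrality could be extracted from the $\ov{\bQ}$-rationality of the Fourier coefficients of $f$ at the cusps, into which $p^{\pm\sigma}$ enters, together with the Hecke bound on their archimedean size.
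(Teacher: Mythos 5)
Your route (pass to the Galois side via Deligne's representation, then use purity and local--global compatibility at $p$) is genuinely different from the paper's, but as written it has a gap exactly at its load-bearing step. Deligne's purity theorem gives the Weil-number property (and, via the monic integral Hecke polynomial $X^2-a_qX+q^{k-1}$, the integrality) of Frobenius eigenvalues only at good primes $q\nmid N\ell$. At the prime $p$, which divides $N$, the statement you need --- that the Frobenius eigenvalues of the Weil--Deligne representation attached to $\rho_f|_{W_{\Q_p}}$ are Weil numbers of weight $k-1$ --- is precisely the Ramanujan--Petersson property at a \emph{ramified} place (temperedness of $\pi_{f,\,p}$); Carayol's local--global compatibility only identifies the two sides and does not transfer purity from unramified to ramified primes. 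This fact is available in the literature (it is what the paper cites \cite{Bla06} for, and it is exactly how the paper gets $\sigma\in i\R$), but it is not a formal consequence of ``Deligne's purity plus local--global compatibility.'' More seriously, the integrality $p^{\pm\sigma+\frac{k-1}{2}}\in\ov{\bZ}$ does not follow from purity at all: purity is a statement about archimedean absolute values and says nothing about $p$-adic valuations, and at a bad prime there is no monic integral Hecke polynomial to fall back on. A purely Galois-theoretic repair would need the $p$-adic member of the compatible system together with $p$-adic local--global compatibility and weak admissibility (Newton above Hodge) to control $\val_p$ of the Frobenius eigenvalues --- much heavier input than you invoke. Your closing alternative (algebraicity of Fourier coefficients at cusps plus an archimedean bound) cannot give $p$-integrality either, and in this paper it would be circular, since the bounds on Fourier coefficients at cusps are deduced downstream from this very lemma.

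For comparison, the paper's proof takes $\sigma\in i\R$ as a black box from the Ramanujan--Petersson theorem at all finite places and then handles integrality elementarily by the twisting trick: globalize $\mu$ to a finite-order Hecke character, pass to the twisted newform $\wt{f}$ with $\pi_{\wt f}\simeq\wt\mu\pi_f$, and use the explicit formulas from \cite{CS18} and \cite{PSS14} to write $a_{\wt f}(p)$ as $p^{\frac{k-1}{2}}(p^{\sigma}+p^{-\sigma})$ (Type 4) or as a root of unity times $p^{-\sigma+\frac{k-1}{2}}$ (Type 5); since Fourier coefficients of normalized newforms are algebraic integers, the claim follows. If you want to keep your Galois-theoretic framing, the same twist is the standard way to make your step rigorous: after twisting, one of the two characters at $p$ is unramified and its Frobenius eigenvalue is the $U_p$-eigenvalue $a_{\wt f}(p)\in\ov{\bZ}$, which is where the integrality actually comes from.
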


\begin{proof}
By the Ramanujan--Petersson conjecture at all finite places (see, e.g., \cite{Bla06}*{Theorem~1 and Remark on page~46}), the characters $\mu\abs{\,\cdot\,}_{\Q_p}^{\sigma}$ and $\mu^{-1}\abs{\,\cdot\,}_{\Q_p}^{-\sigma}$ are unitary, so $\sigma \in i\R$. By complex conjugation, it then remains to show that $p^{-\sigma+\frac{k-1}2} \in \ov{\bZ}$. For this, we first globalize $\mu$ to a finite order character $\wt{\mu} \colon  \A^\times_\bQ/\Q^\times \ra \bC^\times$ (compare with \cite{AT09}*{Chapter X, Section 2, Theorem 5}), set $\wt{\pi} \ce \wt{\mu}\pi_f$, and let $\wt{f}$ be the normalized newform of weight $k$ on $\Gamma_1(\wt{N})$ for which $\pi_{\wt{f}} \simeq \wt{\pi}$ (see \cite{Gel75}*{Theorem~5.19}), so that  $a_{\wt{f}}(p) \in \ov{\bZ}$ (see \S\ref{pif}).

If $\pi_{f,\, p}$ is of Type 4, then $\pi_{\wt{f},\, p} \simeq \abs{\,\cdot\,}_{\Q_p}^{\sigma} \boxplus \abs{\,\cdot\,}_{\Q_p}^{-\sigma}$ with $\sigma \neq \pm \frac 12$, so \cite{CS18}*{equation before (30)} gives
\[
\tst a_{\wt{f}}(p) = p^{\frac k2}W_{\pi_{\wt{f},\, p},\, \psi_p}(\abcd p \ \ 1) \overset{\x{\cite{PSS14}*{equation (121)}}}{=} p^{\frac{k - 1}2} (p^{\sigma} + p^{-\sigma}),
\]
where $\psi_p\colon \bQ_p \ra \bC^\times$ is an additive character  with $c(\psi_p) = 0$ and $W_{\pi_{\wt{f},\, p},\, \psi_p}$ is the normalized Whittaker newform of $\pi_{\wt{f},\, p}$ (see \S\ref{pp:Whittaker}).  Checking prime by prime, we obtain the sought $p^{-\sigma+\frac{k-1}2} \in \ov{\bZ}$.

If $\pi_{f,\,p}$ is of Type 5, then  $\pi_{\wt{f},\, p} \simeq \mu^2\abs{\,\cdot\,}_{\Q_p}^{\sigma} \boxplus \abs{\,\cdot\,}_{\Q_p}^{-\sigma}$ with $\mu^2 \neq \mathbf 1$, so \cite{CS18}*{equation (30)} gives
\[
\tst a_{\wt{f}}(p) = p^{\frac k2}W_{\pi_{\wt{f},\, p},\, \psi_p}\ppp{\abcd{p}{}{}{1}} \prod_{q\mid \wt{N},\, q\ne p}W_{\pi_{\wt{f},\, q},\, \psi_q}\ppp{\abcd{p}{}{}{1}}
\]
with $\psi_q$ and $W_{\pi_{\wt{f},\, q},\, \psi_q}$ as before. Since $\abcd{p}{}{}{1}=\abcd p00p\abcd{1}{}{}{p^{-1}}$, the factors for $q \neq p$ are all roots of unity (see \S\ref{pp:Whittaker}), so \cite{PSS14}*{equation (121)} now directly implies the sought $p^{-\sigma+\frac{k-1}2} \in \ov{\bZ}$.
\end{proof}

The following key lemma uses the adelic point of view to link the global $p$-adic valuation $\val_p(f|_\fc)$  to the local $p$-adic valuations $\val_p(W_{\pi_{f,\,p},\, \psi_p}(g_{t,\,\ell,\,v}))$ that were bounded in \Cref{T1,T2}.

\blem\label{prop:localgloballink}
For a prime $p$, a normalized newform  $f$ of weight $k$ on $\Gamma_0(N)$, and a cusp $\fc$ in $X_0(N)(\bC)$ of denominator $L$,
\be\label{e:pnmidN}
\xq{if} p\nmid N, \qxq{then} \val_p(f|_\fc) \ge 0.
\ee
If, in contrast, $p \mid N$, then, setting $\pi 
\ce \pi_{f,\, p}$ for brevity \up{see \uS\uref{pif}}, for any additive character $\psi \colon \Q_p \ra \bC^\times$ with $c(\psi)=0$, with the notation of \uS\uS\uref{pp:Whittaker}--\uref{elements-g}, we have  
\[
\tst \val_p(f|_\fc) \ge - \frac k2 \val_p\ppp{\frac{N}{\gcd(L^2,\, N)}} + \min_{\tau \in \Z_{\ge 0}, \, v \in \Z_p^\times }\left( \frac{k\tau}{2}+ \val_p(W_{\pi,\, \psi}(g_{\tau - \max(\val_p(N), \, 2\val_p(L)),\, \val_p(L),\, v}))\right)\!.
\]
\elem

\begin{proof}
We included \eqref{e:pnmidN} because it follows from the argument below, though \cite{DI95}*{Remark~12.3.5} gives it, too. We fix additive characters $\psi_q \colon \Q_q \ra \bC^\times$ with $c(\psi_q)=0$ for each prime $q \mid N$ such that $\psi_p = \psi$ in the case $p \mid N$, we fix a $\gamma=\sabcd mb{L}d\in\SL_2(\Z)$ with $\fc = \gamma \infty$, and we consider a variable Fourier coefficient $a_f(r;\gamma)$. 
By \cite{CS18}*{Proposition 3.3}, 
there are $v_q \in \Z_q^\times$ (that depend on $r$) such that
\[
\tst a_f(r;\gamma) = a_f(r_0)e^{\frac{2\pi ird}{w(\fc)L}} \prod_{q\mid N} q^{\frac k2 \ppp{\val_q(r) - \val_q\ppp{\frac{N}{\gcd(L^2, N)}}}} W_{\pi_{f,\, q},\, \psi_q}(g_{\val_q(r) - \max(\val_q(N), \, 2\val_q(L)),\, \val_q(L),\, v_q}),
\]
where $r_0 \ce \prod_{q \nmid N} q^{\val_q(r)}$. Since $a_f(r_0) \in \ov{\bZ}$ (see \S\ref{pif}) and $W_{\pi_{f,\, q},\, \psi_q}$ takes values in $\ov{\bZ}[\frac 1q]$ (see \Cref{prop:Type1b} with \Cref{prop:properties of a_p}), it remains to take $p$-adic valuations and let $r$ vary.
\end{proof}

We are ready to bound the $p$-adic valuations of Fourier expansions of newforms at cusps. 
\bthm \label{main-estimates} \label{t:main}
For a prime $p$, a  cuspform  $f$ that is a $\ov{\bZ}$-linear combination of normalized newforms of weight $k$ on $\Gamma_0(N)$, 
 a cusp $\fc \in X_0(N)(\bC)$ of denominator $L$, and an isomorphism $\bC \simeq \ov{\bQ}_p$, 
\[
\val_p(f|_\fc) \geq\tst  - \frac k2 \val_p\ppp{\frac{N}{\gcd(L^2,\, N)}} + \begin{cases}
0  &\text{if }~\val_p(\gcd(L, \frac NL)) = 0, \\
0  &\text{if }~\val_p(\gcd(L, \frac NL)) = 1,~\val_p(N) >2, \\
- \frac12 & \text{if }~\val_p(L) = \frac{1}{2}\val_p(N) = 1,  \\
1 -   \frac 12 \val_p(\gcd(L, \frac NL))  & \text{otherwise,}
\end{cases}
\]
as well as the following stronger bounds in the case $p = 2$\ucolon
\[
\tst \val_2(f|_\fc)\geq \tst- \frac k2 \val_2\ppp{\frac{N}{\gcd(L^2,\, N)}} + \begin{cases}
 0 & \text{if }~\val_2(L) = \frac{1}2\val_2(N) =1,\\
\frac{k}2 & \text{if }~\val_2(L) = \frac{1}2\val_2(N) \in \{2, 3, 4\}, \\ \frac{k}2 + 1 - \frac{1}{4}\val_2(N) &\text{if }~\val_2(L) = \frac{1}2\val_2(N) > 4, \\
 0  & \text{if }~\val_2(\gcd(L, \frac NL)) = 3,~\val_2(N) > 6.
\end{cases}
\]
\ethm

\begin{proof}
We lose no generality by assuming that $f$ is a normalized newform of weight $k$ on $\Gamma_0(N)$, so we set $\pi \ce \pi_{f,\, p}$ (see \S\ref{pif}) and fix an additive character $\psi \colon \bQ_p \ra \bC^\times$ with $c(\psi) = 0$.

 The case $\val_p(N) =0$ follows from \eqref{e:pnmidN}. 
In the case $\val_p(N) =1$, we have $\val_p(L)\in \{0,1\}$ and $a(\pi)=1$, and \Cref{prop:localgloballink} reduces us to showing that
\[
\tst \frac{k\tau}{2}+ \val_p(W_{\pi,\, \psi}(g_{\tau - \max(1, \, 2\val_p(L)),\, \val_p(L),\, v})) \ge 0
\]
for every $\tau \in  \bZ_{\ge 0}$ and $v \in \bZ_p^\times$, which follows from the first case of Proposition \ref{p:otherprimes}.

In the remaining case $\val_p(N) \ge 2$, by \S\ref{pif}, the representation $\pi$ is of Type 1, 3, 4, or 5 with $a(\pi) = \val_p(N)$, and 
 \Cref{prop:localgloballink} reduces us to showing that for $\tau \in  \bZ_{\ge 0}$ and $v \in \bZ_p^\times$ the quantity
\be \lab{key-quantity}
\tst \frac{k\tau}{2}+ \val_p(W_{\pi,\, \psi}(g_{\tau - \max(\val_p(N), \, 2\val_p(L)),\, \val_p(L),\, v}))
\ee
is at least the summand split into different cases in the desired inequalities. When $\val_p(L) \neq \frac{\val_p(N)}{2}$, this is immediate from \Cref{T1,T2}, so we assume from now on that $\val_p(L) = \frac{\val_p(N)}{2}$.

For $\pi$ of Type 3, if $p$ is odd, then \Cref{T1}~\ref{1-ii} shows that $\val_p(N) = 2$ and gives the conclusion (after plugging in the bounds from \Cref{T1}~\ref{1-ii}, the expression \eqref{key-quantity} becomes linear in $\tau$, so its extrema are at the endpoints of the range for $\tau$), and if $p = 2$, then \Cref{T2}~\ref{case-iii} (with \S\ref{s:type1b}) shows that $\val_2(N) \in \{4, 6\}$ and  gives the conclusion. 
For $\pi \simeq \mu \abs{ \cdot}_{\bQ_p}^\sigma \oplus \mu\i \abs{ \cdot}_{\bQ_p}^{-\sigma}$ of Type 4 or 5, \Cref{l:classi} shows that $|\val_p(p^\sigma)| \le \frac{k-1}{2}$, so \Cref{T1,T2} likewise give the conclusion. 

In the remaining case when $\pi$ is of Type 1, for odd $p$, by \Cref{l:vanishing of Whittaker newform}, we may restrict to $\tau = 0$, and then conclude by \Cref{T1}. In contrast, for $p = 2$, we combine \Cref{lem:conductors} and \Cref{l:vanishing of Whittaker newform} to reduce either to $a(\pi) = 2$ with $\tau = 0$ or to $\tau > 0$,
and then use \Cref{T2}.
\epf

We explicate the weight $2$ case of \Cref{main-estimates} because it is the most relevant one for our goals.

\begin{corollary}\label{cor:globalbdfcelliptic}
For  a prime $p$, a $\ov{\bZ}$-linear combination $f$ of  normalized newforms of weight $2$ on $\Gamma_0(N)$, a cusp $\fc \in X_0(N)(\bC)$ of denominator  $L$, and an isomorphism $\bC \simeq \ov{\bQ}_p$, 
\[
\tst
\val_p(f|_\fc)\geq -\val_p(\frac NL)+\begin{cases} 0 &\text{if }~\val_p(L) \in \{0,\val_p(N)\}, \\
\max(\frac12,\frac{1}{p-1}) &\text{if }~\val_p(L) = 1,~\val_p(N)=2,  \\
 1 &\text{if }~\val_p(L) \in \{1, \val_p(N)-1\},~\val_p(N)>2,   \\
 1 + \frac{1}2\val_2(N) &\text{if }~p=2,~  \val_2(L) = \frac{1}2\val_2(N) \in \{2,3,4\}, \\
 2 + \frac{1}4\val_2(N) &\text{if }~p=2,~  \val_2(L) = \frac{1}2\val_2(N) >4,\\
 3&\text{if }~p=2,~  \val_2(L) \in\{3,\val_2(N)-3\},~ \val_2(N)>6,\\
  1+ \frac{1}{2}\val_p(\gcd(L, \frac NL)) &\text{otherwise.} \QED
\end{cases}
\]
\end{corollary}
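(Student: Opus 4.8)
The plan is to obtain this as the special case $k = 2$ of \Cref{main-estimates} after a cosmetic change of variables in the leading term. The first step is to record the elementary $p$-adic identity that, for a cusp $\fc$ of denominator $L \mid N$ and any prime $p$, with $a \ce \val_p(L)$ and $n \ce \val_p(N)$ (so $0 \le a \le n$),
\[
\tst \val_p(w(\fc)) = \val_p\ppp{\tfrac{N}{\gcd(L^2,\, N)}} = n - \min(2a,\, n) = \max(n - 2a,\, 0) = \val_p\ppp{\tfrac NL} - \val_p\ppp{\gcd\ppp{L,\, \tfrac NL}},
\]
the last equality because $\val_p(\gcd(L, \tfrac NL)) = \min(a,\, n - a)$, so that both sides equal $n - 2a$ when $2a \le n$ and $0$ otherwise. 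Substituting $k = 2$ into \Cref{main-estimates} and applying this identity replaces the leading term $-\tfrac k2 \val_p(w(\fc))$ by $-\val_p(\tfrac NL) + \val_p(\gcd(L, \tfrac NL))$, turning every bound there into one of the form claimed in the corollary.

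The remaining work is a finite verification that, for each triple $(p, a, n)$, the branch of \Cref{main-estimates} that governs it --- including the sharper $p = 2$ branches, where $\tfrac k2 = 1$ --- yields exactly the listed summand. For instance, $\val_p(\gcd(L, \tfrac NL)) = 0$ is equivalent to $\val_p(L) \in \{0, \val_p(N)\}$ and produces $-\val_p(\tfrac NL)$; the condition $\val_p(\gcd(L, \tfrac NL)) = 1$ with $\val_p(N) > 2$ is equivalent to $\val_p(L) \in \{1, \val_p(N) - 1\}$ with $\val_p(N) > 2$ and produces $-\val_p(\tfrac NL) + 1$; the general ``otherwise'' branch contributes the summand $1 - \tfrac12\val_p(\gcd(L, \tfrac NL))$ on top of the leading term, giving $-\val_p(\tfrac NL) + 1 + \tfrac12\val_p(\gcd(L, \tfrac NL))$; in the branch $\val_p(L) = \tfrac12\val_p(N) = 1$ the general bound gives $-\val_p(\tfrac NL) + \tfrac12$ while for $p = 2$ the first $p = 2$ branch improves this to $-\val_2(\tfrac NL) + 1$, and these combine into the uniform $-\val_p(\tfrac NL) + \max(\tfrac12, \tfrac1{p-1})$ since $\tfrac1{p-1} \le \tfrac12$ exactly for $p \ge 3$; and the three further $p = 2$ cases of the corollary, namely $\val_2(L) = \tfrac12\val_2(N) \in \{2, 3, 4\}$, $\val_2(L) = \tfrac12\val_2(N) > 4$, and $\val_2(L) \in \{3, \val_2(N) - 3\}$ with $\val_2(N) > 6$, come respectively from the second, third, and fourth $p = 2$ branches of \Cref{main-estimates} (for the last, note $\val_2(\gcd(L, \tfrac NL)) = 3$ is equivalent to $\val_2(L) \in \{3, \val_2(N) - 3\}$ once $\val_2(N) \ge 6$). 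Since \Cref{main-estimates} already allows $\ov\bZ$-linear combinations of newforms, there is nothing to reduce on the input side.

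I expect the only real, and still quite mild, obstacle to be the bookkeeping in the previous paragraph: one has to check that the corollary's seven-way partition of the parameter space $(p, \val_p(L), \val_p(N))$ is exactly compatible with the partition used in \Cref{main-estimates}, i.e.\ that the strengthened $p = 2$ bounds are invoked precisely in the cases where $\val_2(L) = \tfrac12\val_2(N)$ or $\val_2(\gcd(L, \tfrac NL)) = 3$, and that the corollary's final ``otherwise'' clause is genuinely the complement of the six preceding cases, so that the (weaker) general ``otherwise'' branch of \Cref{main-estimates} is the operative estimate there. It helps to exploit the symmetry $\val_p(L) \leftrightarrow \val_p(N) - \val_p(L)$ --- built into \Cref{main-estimates} through its dependence on $\gcd(L, \tfrac NL)$ --- which is why cases like ``$\val_p(L) \in \{1, \val_p(N) - 1\}$'' and ``$\val_2(L) \in \{3, \val_2(N) - 3\}$'' occur in symmetric pairs. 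No new analytic or geometric input is needed beyond \Cref{main-estimates}.
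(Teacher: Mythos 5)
Your proposal is correct and is exactly how the paper obtains this statement: the corollary is stated with a terminal \QED precisely because it is the $k=2$ specialization of \Cref{main-estimates}, rewritten via the identity $\val_p\bigl(\tfrac{N}{\gcd(L^2,N)}\bigr)=\val_p\bigl(\tfrac NL\bigr)-\val_p\bigl(\gcd\bigl(L,\tfrac NL\bigr)\bigr)$ and a case-by-case matching of branches (including using the strengthened $p=2$ branches for the $\val_2(L)=\tfrac12\val_2(N)$ and $\val_2(\gcd(L,\tfrac NL))=3$ cases, and the $\max(\tfrac12,\tfrac1{p-1})$ consolidation at $\val_p(L)=1$, $\val_p(N)=2$). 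Your bookkeeping checks out, so there is nothing to add.
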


\begin{example}\label{exmp:Fourier expansions}
In \Cref{table1,table2}, for newforms $f$ associated to elliptic curves of conductor $N$, we used the SageMath algorithm\footnote{Available at \url{https://github.com/michaelneururer/products-of-eisenstein-series}. A faster and more general pari/gp algorithm  for algebraically computing Fourier expansions at cusps is based on \cite{Coh19}, but we did not use it because it is heuristic: to convert the numerically approximated Fourier coefficients to algebraic numbers, it uses a heuristic application of the LLL-algorithm. Our denominator bounds could help make this algorithm rigorous.
} described in \cite{DN18}*{Section 6} to compute the valuations $\val_p(f|_{\frac{1}{p^\ell}})$ for  $0< \ell \le \frac{1}{2}\val_p(N)$ (the restriction to this range is natural due to the Atkin--Lehner involutions). The resulting examples illustrate the sharpness of  \Cref{cor:globalbdfcelliptic}.
\numberwithin{table}{subsection}
	\renewcommand\thefigure{\ref*{exmp:Fourier expansions}.\arabic{figure}}
	\begin{table}[H]
			\begin{tabular}{c|c|c|c|c|c|c}
				Newform $f$& Level & Label & $\val_2(f|_{\frac 12})$ & $\val_2(f|_{\frac 14})$ & $\val_2(f|_{\frac 18})$ & $\val_2(f|_{\frac{1}{16}})$
				\\ &&&&&& \\[-1em]
				\hline &&&&&& \\[-1em]
				$q - 2q^{3} - q^{5} + 2q^{7} + q^{9} + O(q^{10})$ & $2^2 \cdot 5$ & 20a & $0$ & $ $  &  & \\

				$q - q^{3} - 2q^{5} + q^{9} + O(q^{10})$ & $2^3 \cdot 3$ & 24a & $-1$ & $ $ & $ $ & \\

				$q + q^{3} - 2q^{5} + q^{9} + O(q^{10})$ & $2^4 \cdot 3$ & 48a & $-2$ & $1$ & $ $ & $ $\\
				
				$q - 2q^{5} - 3q^{9} + O(q^{10})$ & $2^5$ & 32a & $-3$ & $-1$ & $ $ & $ $\\
				$q + 2q^5 - 3q^9 + O(q^{10})
				$ & $2^6$ & 64a & $-4$ & $-2$ & $1$ & $ $ \\
				$q - 2q^{3} + 2q^{5} + 4q^{7} + q^{9} + O(q^{10})
				$ & $2^7$ & 128b & $-5$ & $-3$ & $-1$ & $ $ \\
				$q + 4q^5 - 3q^9 + O(q^{10})$ & $2^8$ & 256c & $-6$ & $-4$ & $-2$ & $1$
			\end{tabular}
			\msk
			\caption{\label{table1}$p$-adic valuations of Fourier expansions for $p=2$ and small levels}
	\end{table}
\vspace{-24pt}
	\begin{table}[h]
		\begin{tabular}{c|c|c|c|c|c}
			Newform $f$& $p$ & Level & Label & $\val_p(f|_{\frac 1p})$ & $\val_p(f|_{\frac{1}{p^2}})$
			\\ &&&& \\[-1em]
			\hline &&&& \\[-1em]
			$q + q^{2} - q^{4} - q^{5} - 3q^{8} + O(q^{10})
			$ & $3$ &$p^2\cdot 5$ & 45a & $-\frac12$ & $ $
			\\ &&&& \\[-1em]
			$q - 2q^{4} - q^{7} + O(q^{10})$ &$3$ & $p^3$ & 27a & $-1$ & $ $
			\\ &&&& \\[-1em]
			$q + q^{2} + q^{4} + 3q^{5} - 4q^{7} + q^{8} + O(q^{10})$ &$3$ & $2\cdot p^4$ & 162d & $-2$ & $0$
			\\ &&&& \\[-1em]
			$q - 2q^{4} + 5q^{7} + O(q^{10})$ &$3$ & $p^5$ & 243b & $-3$ & $-1$
			\\ &&&& \\[-1em]
			$q + q^{2} + q^{3} - q^{4} + q^{6} - 3q^{8} + q^{9} + O(q^{10})$ & $5$ & $3\cdot p^2$ & 75b & $-\frac12$ & $ $
			\\ &&&& \\[-1em]
			$q - q^{2} + 2q^{3} + q^{4} - 2q^{6} - q^{8} + q^{9} + O(q^{10})$ & $7$ & $2\cdot p^2$ & 98a & $-\frac12$ & $ $
			\\ &&&& \\[-1em]
			$q + 2q^{2} - q^{3} + 2q^{4} + q^{5} - 2q^{6} + 2q^{7} - 2q^{9} + O(q^{10})$
			 & $11$ & $p^2$ & 121d & $-\frac12$ & $ $
		\end{tabular}
		\msk
		\caption{\label{table2}$p$-adic valuations of Fourier expansions for $3\le p\le 11$ and small levels}
	\end{table}
\end{example}

\vspace{-24pt}

\section{The differential determined by a newform lies in the $\bZ$-lattice $H^0(X_0(N),
\Omega)$} \label{canonical-lattice}


Any  cuspform $f$ of weight $2$ on $\Gamma_0(N)$ that has a rational Fourier expansion determines a differential form $\omega_f$ on $X_0(N)_\bQ$. The goal of this section is to use the results of \S\ref{s:global p-adic valuations} to show in \Cref{wf-integral} that, in particular, if such an $f$ is a normalized newform (that then corresponds to an isogeny class of elliptic curves over $\bQ$), then $\omega_f$ is integral in the sense that it lies in the $\bZ$-lattice
\[
H^0(X_0(N), \Omega) \subset H^0(X_0(N)_\bQ, \Omega^1),
\]
where $\Omega$ is the relative dualizing sheaf. For arguing this, it is convenient to work with the regular stack $\sX_0(N)$ that has both a modular interpretation and line bundles of modular forms instead of the possibly singular scheme $X_0(N)$ whose scheme-theoretic points lack a clear modular description. Thus, we begin by reviewing the definition of the ``relative dualizing'' sheaf in the stacky case. Some material of this section overlaps with the appendix of the unpublished manuscript \cite{Manin-Stevens}.


\bpp[``Relative dualizing sheaves'' of Deligne--Mumford stacks] \lab{rel-dual}
Let $X \ra S$ be a flat, locally finitely presented morphism of schemes with Cohen--Macaulay fibers. By \cite{SP}*{Lemma~\href{https://stacks.math.columbia.edu/tag/02NM}{02NM}}, the scheme $X$ is a disjoint union of clopen subschemes whose relative dimension over $S$ is constant. Thus, the theory of Grothendieck duality, specifically \cite{Con00}*{bottom halves of pages 157 and 214}, supplies relative dualizing $\sO_X$-module $\Omega_{X/S}$ that is quasi-coherent, locally finitely presented, $S$-flat, and of formation compatible with base change in $S$. For instance, if $X \ra S$ is smooth, then $\Omega_{X/S}$ is simply the top exterior power of the vector bundle $\Omega^1_{X/S}$.  The formation of $\Omega_{X/S}$ is compatible with \'{e}tale localization on $X$: for every \'{e}tale $S$-morphism $f\colon X' \ra X$ one has a canonical isomorphism
\be \lab{et-omega}
\iota_f \colon f^*(\Omega_{X/S}) \isomto \Omega_{X'/S}
\ee
supplied by \cite{Con00}*{Theorem 4.3.3 and bottom half of page 214}. Moreover, if $f' \colon X'' \ra X'$ is a further \'{e}tale $S$-morphism, then \cite{Con00}*{equation (4.3.7) and bottom half of page 214} supply the following compatibility:
\be \lab{iso-comp}
\iota_{f \circ f'} = \iota_{f'} \circ ((f')^*(\iota_{f})) \colon (f')^*(f^*(\Omega_{X/S})) \isomto \Omega_{X''/S}.
\ee
Let now $\sX \ra S$ be a flat and locally of finite presentation morphism of Deligne--Mumford stacks with Cohen--Macaulay fibers.
By working \'{e}tale locally on $S$, the compatibilities \eqref{iso-comp} ensure\footnote{See \cite{LMB00}*{Lemme 12.2.1} for a discussion of analogous compatibilities and their relevance for glueing.} that the $\sO_X$-modules $\Omega_{X/S}$ for \'{e}tale morphisms $X \ra \sX$ from a scheme $X$ glue to a quasi-coherent, locally finitely presented, $S$-flat $\sO_{\sX}$-module $\Omega_{\sX/S}$, the \emph{``relative dualizing sheaf''} of $\sX \ra S$, whose formation is compatible with base change in $S$ (see \cite{Con00}*{Theorem 4.4.4 and bottom half of page~214} for the base change aspect). If $\sX \ra S$ is smooth, then $\Omega_{\sX/S}$ is the top exterior power of $\Omega^1_{\sX/S}$.

The quasi-coherent $\sO_{\sX}$-module $\Omega_{\sX/S}$ has full support and is $S$-fiberwise Cohen--Macaulay: indeed, this reduces to the case when $S$ is the spectrum of a field and $\sX$ is a scheme, and in this case, by \cite{Har66}*{Remark on page~291}, the stalks of $\Omega_{\sX/S}$ are dualizing modules for the corresponding stalks of $\sO_{\sX}$ and hence, by \cite{SP}*{Lemma~\href{https://stacks.math.columbia.edu/tag/0AWS}{0AWS}}, are Cohen--Macaulay of full support. Similarly, by \cite{SP}*{Lemma~\href{https://stacks.math.columbia.edu/tag/0DW9}{0DW9}}, the module $\Omega_{\sX/S}$ is a line bundle if and only if the $S$-fibers of $\sX$ are Gorenstein.

We draw attention to the case when $\sX \ra S$ is proper and $\sX$ is not a scheme, in which we \emph{do not} claim any dualizing properties of the $\sO_\sX$-module $\Omega_{\sX/S}$ constructed above.
\epp

\bpp[The case of modular curves] \lab{modular-case}
For us, the key case is when $S = \Spec \bZ$ and $\sX$ is either the modular stack $\sX_\Gamma$ or its coarse space $X_\Gamma$ for an open subgroup $\Gamma \subset \GL_2(\wh{\bZ})$ (see \S\ref{conv}). The resulting $\sX \ra S$ is flat, of finite presentation, 
with Cohen--Macaulay fibers (the latter by the normality of $\sX$ and \cite{EGAIV2}*{Corollaire 6.3.5~(i)}), so the discussion of \S\ref{rel-dual} applies. Normality of $\sX$ and \cite{EGAIV2}*{Corollaire 6.12.6~(i)} ensure that $\sX^\reg$ is the complement of finitely many closed points of $\sX$, and hence contains $\sX_\bQ$ and is $\bZ$-fiberwise dense in $\sX$. Since $\sX^\reg$ is also $\bZ$-fiberwise Gorenstein (see \cite{Liu02}*{Chapter 6, Example 3.18}), the coherent, $\bZ$-flat, Cohen--Macaulay $\sO_\sX$-module $\Omega_{\sX^\reg/\bZ}$ of \S\ref{rel-dual} is a line bundle. 
In addition, $\Omega_{\sX/\bZ}$ agrees with the line bundle $\Omega^1_{\sU/\bZ}$ over any $\bZ$-smooth open $\sU \subset \sX$, for instance, over $\sX_{\bZ[\frac 1N]} \subset \sX$ for an $N \ge 1$ with $\Gamma(N) \subset \Gamma$ (see \cite{DR73}*{Chapitre IV, Th\'{e}or\`{e}me~6.7} and \cite{modular-description}*{Proposition 6.4~(a)}).
\epp

The key advantages of $\Omega_{\sX/\bZ}$ over the $\sO_\sX$-module $\Omega^1_{\sX/\bZ}$ are its aforementioned pleasant properties at the nonsmooth points. 
The following comparison relates $\Omega_{\sX_\Gamma/\bZ}$ to the more concrete~$\Omega_{X_\Gamma/\bZ}$.

\bprop\label{app-main}
For an open subgroup $\Gamma \subset \GL_2(\wh{\bZ})$, an $N \ge 1$ with $\Gamma(N) \subset \Gamma$, and the coarse space morphism $ \sX_\Gamma \xra{\pi} X_\Gamma$, we have an isomorphism of line bundles
\be \lab{omega-comp}
 \Omega^1_{(X_\Gamma)_{\bZ[\frac{1}{N}]}/\bZ[\frac{1}{N}]} \isomto (\pi_{\bZ[\frac{1}{N}]})_*(\Omega^1_{(\sX_\Gamma)_{\bZ[\frac{1}{N}]}/\bZ[\frac{1}{N}]}) 
\ee
and for any open $U \subset X_\Gamma$ such that $\sU \ce \pi\i(U) \xra{\pi} U$ is \'{e}tale over a $\bZ$-fiberwise dense open of~$U$, 
\[
H^0(U, \Omega) \subset H^0(U_\bQ, \Omega^1) \qq \text{is identified by \eqref{omega-comp} with} \qq H^0(\sU, \Omega) \subset H^0(\sU_\bQ, \Omega^1).
\]
\eprop

\bpf
The second assertion implies the first: indeed, for every open 
$U \subset (X_\Gamma)_{\bZ[\frac{1}{n}]}$, the map $\pi\i(U) \ra U$ is \'{e}tale over the complement of $j = 0$ and $j = 1728$ (see \cite{modular-description}*{last paragraph of the proof of Proposition 6.4}). For the same reason, away from $j = 0$ and $j = 1728$ the pullback map 
\be \label{omega-comp-Q}
\Omega^1_{(X_\Gamma)_\bQ/\bQ} \ra (\pi_\bQ)_*( \Omega^1_{(\sX_\Gamma)_\bQ/\bQ})
\ee
is an isomorphism: there it is the
$\Omega^1_{(X_\Gamma)_\bQ/\bQ}$-twist of the coarse space isomorphism $\sO_{X_\Gamma} \isomto \pi_*(\sO_{\sX_\Gamma})$. To conclude that \eqref{omega-comp-Q} is an isomorphism, we claim that so is its base change to the completion $\wh{\sO}_{(X_\Gamma)_\bQ,\, x}^\sh$ of the strict Henselization of $(X_\Gamma)_\bQ$ at any $x \in X_\Gamma(\ov{\bQ})$. We have 
\[
\wh{\sO}_{(X_\Gamma)_\bQ,\, x}^\sh \simeq \ov{\bQ}\llb t\rrb \qq \text{under which} \qq (\Omega^1_{(X_\Gamma)_\bQ/\bQ})_{\wh{\sO}_{(X_\Gamma)_\bQ,\, x}^\sh} \simeq \ov{\bQ}\llb t\rrb \cdot dt,
\]
and also, using the identification $X_\Gamma(\ov{\bQ}) \cong \sX_\Gamma(\ov{\bQ})$ to view $x$ in $\sX_\Gamma(\ov{\bQ})$,
\[
\wh{\sO}_{(\sX_\Gamma)_\bQ,\, x}^\sh \simeq \ov{\bQ}\llb \tau\rrb \qq \text{under which} \qq (\Omega^1_{(\sX_\Gamma)_\bQ/\bQ})_{\wh{\sO}_{(\sX_\Gamma)_\bQ,\, x}^\sh} \simeq \ov{\bQ}\llb \tau\rrb \cdot d\tau.
\]
Taking into account the action of the automorphism group of $x \in \sX_\Gamma(\ov{\bQ})$, we have, compatibly,
\[
\wh{\sO}_{(X_\Gamma)_\bQ,\, x}^\sh \cong (\wh{\sO}_{(\sX_\Gamma)_\bQ,\, x}^\sh)^G \q \text{and} \q ((\pi_\bQ)_*( \Omega^1_{(\sX_\Gamma)_\bQ/\bQ}))_{\wh{\sO}_{(X_\Gamma)_\bQ,\, x}^\sh} \cong ((\Omega^1_{(\sX_\Gamma)_\bQ/\bQ})_{\wh{\sO}_{(\sX_\Gamma)_\bQ,\, x}^\sh})^G
\]
for some finite group $G$ acting faithfully on $\wh{\sO}_{(\sX_\Gamma)_\bQ,\, x}^\sh$ (see \cite{DR73}*{Chapitre I, Section (8.2.1)} or \cite{Ols06}*{Theorem 2.12}). Since the ramification of $\pi_\bQ$ is tame, the faithfulness of the action implies by Galois theory that $G \simeq \mu_{\#G}(\ov{\bQ})$
 with, at the cost of changing the uniformizer $\tau$ above, $t = \tau^{\#G}$ and $\zeta \in \mu_{\#G}(\ov{\bQ})$ acts by $\tau \mapsto \zeta \cdot \tau$ (see \cite{Ser79}*{Chapter IV, Section 2, Proposition 8}). The desired $\ov{\bQ}\llb t\rrb \cdot dt \isomto (\ov{\bQ}\llb \tau\rrb \cdot d\tau)^G$ follows.


To conclude the sought identification $H^0(U, \Omega) \cong H^0(\sU, \Omega)$, we let $U' \subset U$ with preimage $\sU' \subset \sU$ be a $\bZ$-fiberwise dense open over which $\pi$ is \'{e}tale. The $\sO_{X_\Gamma}$-module $\Omega_{X_\Gamma/\bZ}$ has depth $2$ at the points in $U \setminus (U' \cup U_\bQ)$ (see \S\ref{modular-case}), and similarly for $\Omega_{\sX_\Gamma/\bZ}$, so, by \cite{EGAIV2}*{Th\'{e}or\`{e}me~5.10.5}, we have
\[ \ba
H^0(U, \Omega) = H^0(U', \Omega) \cap H^0(U_\bQ, \Omega^1) \qq &\text{inside} \q H^0(U'_\bQ, \Omega^1), \\
 H^0(\sU, \Omega) = H^0(\sU', \Omega) \cap H^0(\sU_\bQ, \Omega^1) \qq &\text{inside} \q H^0(\sU'_\bQ, \Omega^1).
\ea \]
Therefore, the isomorphism \eqref{omega-comp-Q} reduces us to the case when $U = U'$. Similarly, neither $H^0(U, \Omega)$ nor $H^0(\sU, \Omega)$ changes if we remove finitely many closed points from $U$, so we assume further that $U$ and $\sU$ are regular, so that $\Omega_{U/\bZ}$ and $\Omega_{\sU/\bZ}$ are line bundles (see \S\ref{modular-case}). Then $(\pi|_{\sU})^* (\Omega_{U/\bZ}) \cong \Omega_{\sU/\bZ}$ by the \'{e}taleness of $\sU \ra U$ (see \eqref{et-omega}), to the effect that 
there is a pullback map 
\[
 \Omega_{U/\bZ} \ra (\pi|_\sU)_*(\Omega_{\sU/\bZ}) \qxq{that is the $\Omega_{U/\bZ}$-twist of the isomorphism} \sO_U \isomto (\pi|_\sU)_*(\sO_\sU),
\]
and hence is an isomorphism. The sought identification follows by taking global sections.
\epf

We conclude that the $\bZ$-lattice determined in the $\bQ$-space of cuspforms $H^0(X_0(N)_\bQ, \Omega^1)$ by the relative dualizing sheaf $\Omega$ on the stack $\sX_0(N)$  agrees with its coarse space counterpart as follows.

\bcor \label{X0N-lattice-comp}\lab{AM-X0n}
For an $N \ge 1$ and the map $\sX_0(N) \xra{\pi} X_0(N)$, we have an $\sO_{X_0(N)}$-module isomorphism $\Omega_{X_0(N)/\bZ} \isomto \pi_*(\Omega_{\sX_0(N)/\bZ})$ that over $\bQ$ is the pullback of K\"{a}hler differentials. In particular,
\be \label{X0N-lattice}
H^0(X_0(N), \Omega) = H^0(\sX_0(N), \Omega) \qxq{inside} H^0(X_0(N)_\bQ, \Omega^1) \cong H^0(\sX_0(N)_\bQ, \Omega^1).
\ee
\ecor

\bpf
The map $\pi$ is \'{e}tale (even a $\bZ/2\bZ$-gerbe) over a $\bZ$-fiberwise dense open of $X_0(N)$, for instance, over the complement of $j = 0$ and $j = 1728$, 
see \cite{modular-description}*{proof of Theorem~6.7}. Thus, in the case $\Gamma = \Gamma_0(N)$, \Cref{app-main} applies to every open $U \subset X_0(N)$ and gives the claim.
\epf

Due to the abstract nature of $\Omega$, the lattice $H^0(\sX_0(N), \Omega)$ is \emph{a priori} inexplicit. To remedy this, in particular, to relate this lattice to the integrality properties of Fourier expansions studied in \S\ref{s:global p-adic valuations}, we will use an integral version of the Kodaira--Spencer isomorphism presented in \Cref{integral-KS}.

\bpp[The line bundle $\omega$] \lab{omega}
The cotangent space at the identity section of the universal generalized elliptic curve gives a line bundle $\omega$ on $\sX(1)$, which pulls back to a line bundle $\omega$ on $\sX_\Gamma$ for every open subgroup $\Gamma \subset \GL_2(\wh{\bZ})$. 
We write `$\cusps$' for the reduced complement of the elliptic curve locus of $\sX_\Gamma$, so that  `$\cusps$' restricts to a Weil divisor on the regular locus $\sX_\Gamma^\reg$, which contains $(\sX_\Gamma)_\bQ$ and is $\bZ$-fiberwise dense in $\sX_\Gamma$ (see \S\ref{modular-case}). 
By \cite{Del71}*{Section 2}, for every $k \in \bZ_{> 0}$ and every $\Gamma \subset \Gamma_1(N)$, the space $H^0((\sX_\Gamma)_\bC, \omega^{\tensor k})$ (resp.,~$H^0((\sX_\Gamma)_\bC, \omega^{\tensor k}(-\cusps))$) is canonically identified  with the $\bC$-vector space of  modular forms (resp.,~cuspforms) of weight $k$ on $\Gamma$ reviewed in \S\ref{pp:q-expansions}, so $H^0(\sX_\Gamma, \omega^{\tensor k})$ (resp.,~$H^0(\sX_\Gamma, \omega^{\tensor k}(-\cusps))$ if $\sX_\Gamma$ is regular) is a $\bZ$-lattice in this $\bC$-vector space.

Thanks to this algebraic description, one enlarges the scope of the definitions: in the rest of this article, by a \emph{modular form} (resp.,~\emph{cuspform}) of weight $k$ on $\Gamma$ over a scheme $S$ we mean an element of $H^0((\sX_\Gamma)_S, \omega^{\tensor k})$ (resp.,~$H^0((\sX_\Gamma)_S, \omega^{\tensor k}(-\cusps))$; we will use the latter only when $\sX_\Gamma$ is regular).
\epp

\bprop \label{integral-KS}
For an open subgroup $\Gamma \subset \GL_2(\wh{\bZ})$, 
letting $y$ range over the generic points of the $\bF_p$-fibers of $\sX_\Gamma$ for the 
set of primes $p$ that divide every \up{equivalently, the smallest} $N \ge 1$ with $\Gamma(N) \subset \Gamma$, and letting $d_y$ denote the valuation of the different ideal of the extension $\sO_{\sX_\Gamma,\, \ov{y}}^\sh/\sO_{\sX(1),\, \ov{y}}^\sh$ of discrete valuation rings \up{see \uS\uref{conv} for the notation}, we have
\[
\tst \Omega_{\sX^\reg_\Gamma/\bZ} \cong \omega_{\sX^\reg_\Gamma}^{\otimes 2}(-\cusps + \sum_y d_y  \ov{\{ y \}}). 
\]
\eprop



\bpf
It is indeed equivalent to consider the smallest $N$ with $\Gamma(N) \subset \Gamma$: if $p \mid N$ but $p \nmid M$ for some $\Gamma(M) \subset \Gamma$, then, for $N' \ce \frac{N}{p^{\val_p(N)}}$, every element of $\Gamma(N')$ is congruent modulo $p^{\val_p(N)}$ to an element of $\Gamma(M)$, so $\Gamma(N') \subset \Gamma(MN')\Gamma(N) \subset \Gamma$, contradicting the minimality of $N$.

For the main assertion, since 
both sides are 
line bundles (see \S\ref{rel-dual}) and $\sX_\Gamma$ is normal, 
by \cite{EGAIV2}*{Th\'{e}or\`{e}me 5.10.5}, it suffices to exhibit the desired isomorphism over the slightly smaller open $\sU \subset \sX^\reg_\Gamma$ that is the preimage of the open of $\sX(1)$ obtained by removing the images of the singular points of $\sX_\Gamma$. We will bootstrap the claim from its case for $\sX(1)$ supplied by \cite{Kat73}*{Section~(A1.3.17)}:
\be \lab{Katz-input}
\Omega^1_{\sX(1)/\bZ} \cong \omega^{\tensor 2}_{\sX(1)}(-\cusps).
\ee
By working \'{e}tale locally on $\sX(1)$ and using \cite{Con00}*{Theorem 4.3.3, equation (4.3.7), bottom of page~206}, we get 
\be \lab{duality-at-its-best}
\Omega_{\sU/\bZ} \cong \Omega_{\sU/\sX(1)} \otimes_{\sO_{\sU}} \pi^* \Omega_{\sX(1)/\bZ},
\ee
where $\pi\colon \sU \ra \sX(1)$ is the forgetful map. Since $\pi$ is finite locally free over $\pi(\sU)$, by 
\cite{Con00}*{bottom half of page~31 and pages~137--139, especially, compatibility (VAR6) on page~139}, 
the $\sO_{\sU}$-module $\Omega_{\sU/\sX(1)}$ reviewed in \S\ref{rel-dual}, is identified with $\sH om_{\sO_{\pi(\sU)}}(\pi_*(\sO_{\sU}), \sO_{\pi(\sU)})$. 
Thus, since $\pi$ is generically \'{e}tale, 
the element
\[
\mathrm{trace} \in \Hom_{\sO_{\pi(\sU)}}(\pi_*(\sO_{\sU}), \sO_{\pi(\sU)}) \cong \Gamma(\sU, \Omega_{\sU/\sX(1)}),
\]
via the correspondence \cite{SP}*{Lemma \href{https://stacks.math.columbia.edu/tag/01X0}{01X0}} (with \cite{SP}*{Lemma \href{https://stacks.math.columbia.edu/tag/0AG0}{0AG0}}), gives rise to the identification
\be \lab{omega-kick}
\tst \Omega_{\sU/\sX(1)}\cong \sO_{\sU}(\sum_{x \in \abs{\sX_\Gamma}^{(1)}} d_x  \ov{\{x\}}),
\ee
where the sum is over the height $1$ points $x$ of $\sX_\Gamma$ and $d_x$ is the order of vanishing of `$\mathrm{trace}$' at $\sO_{\sX_\Gamma,\, \ov{x}}^\sh$. 
By considering the fractional multiples of `$\mathrm{trace}$' that still map $\sO_{\sX_\Gamma,\, \ov{x}}^\sh$ into $\sO_{\sX(1),\, \ov{x}}^\sh$, we see that $d_x$ is the valuation of the different ideal of $\sO_{\sX_\Gamma,\, \ov{x}}^\sh/\sO_{\sX(1),\, \ov{x}}^\sh$ (see \cite{Ser79}*{Chapter III, Section 3}). Thus, $d_x = 0$ whenever this extension is \'{e}tale, so each $x$ that contributes to the sum either lies on the cusps of $(\sX_\Gamma)_\bQ$ or is the generic point of an irreducible component of an $\bF_p$-fiber of $\sX_\Gamma \ra \Spec \bZ$ such that $p \mid N$ for every $\Gamma(N) \subset \Gamma$ (see \cite{DR73}*{Chapitre IV, D\'{e}finition 3.2}). At the former, ramification is tame and $d_x = e_x - 1$, where $e_x$ is the ramification index of $\sO_{\sX_\Gamma,\, \ov{x}}^\sh/\sO_{\sX(1),\, \ov{x}}^\sh$ (see \cite{Ser79}*{Chapter III, Section 6, Proposition 13}). Thus, since 
\[
\tst \pi^*(\omega^{\tensor 2}_{\sX(1)}(-\cusps))\cong \omega_{\sU}^{\otimes 2}(-\sum_{x\in\cusps}e_x \overline{\{x\}}),
\]
by \eqref{Katz-input}--\eqref{omega-kick}  we obtain the desired
\[
\tst \Omega_{\sU/\bZ} \cong \omega_{\sU}^{\otimes 2}(-\cusps + \sum_y d_y  \ov{\{ y \}}). \qedhere
\]
\epf

\begin{variant} \label{coarse-KS}
For an open subgroup $\Gamma \subset \GL_2(\wh{\bZ})$ and the forgetful map $\pi \colon X_\Gamma \ra X(1)$, 
letting $y$ range over the height $1$ points of $X_\Gamma$ and letting $d'_y$ denote the valuation of the different ideal of the extension $\sO_{X_\Gamma,\, y}/\sO_{X(1),\, \pi(y)}$ of discrete valuation rings,
we have
\[
\tst \Omega_{X^\reg_\Gamma/\bZ} \cong (\pi^*\Omega^1_{X(1)/\bZ})|_{X_\Gamma^\reg}(\sum_{y \in X_\Gamma^{(1)}} d'_y  \ov{\{ y \}}). 
\]
\end{variant}

\bpf
The proof is the same (but simpler) as that of \Cref{integral-KS}. 
Namely, $X(1) \cong \bP^1_\bZ$ is $\bZ$-smooth, so $\Omega_{X(1)/\bZ} \cong \Omega^1_{X(1)/\bZ}$, and, similarly to there, one may restrict to the preimage $U \subset X_\Gamma$ of $X(1) \setminus \pi(X_\Gamma \setminus X_\Gamma^\reg)$ and then conclude by using the analogues of \eqref{duality-at-its-best} and \eqref{omega-kick}.
\epf


For general $\Gamma$, it is tricky to directly compute the $d_y$ that appear in the integral Kodaira--Spencer formula of \Cref{integral-KS} because the extension $\sO_{\sX_\Gamma,\, \ov{y}}^\sh/\sO_{\sX(1),\, \ov{y}}^\sh$ involves imperfect residue fields and may be wildly ramified. For $\Gamma_0(N)$, we will compute the $d_y$  in \Cref{compute-dy}, and for this  we first argue that only the level at $p$  matters and then describe $\sX_0(p^{\val_p(N)})$ along the cusps.


\blem \label{indep-of-more}
For open subgroups $\Gamma, \Gamma' \subset \GL_2(\wh{\bZ})$ with $\Gamma(N) \subset \Gamma$ and $\Gamma(N') \subset \Gamma'$, a generic point $y_{\Gamma \cap \Gamma'}$ of the $\bF_p$-fiber of $\sX_{\Gamma \cap \Gamma'}$ with $p \nmid N'$, and its image $y_\Gamma$ in $\sX_\Gamma$, in Proposition \uref{integral-KS} we have
\[
d_{y_{\Gamma \cap \Gamma'}} = d_{y_\Gamma}.
\]
\elem

\bpf
By \cite{DR73}*{Chapitre IV, Construction 3.8, Proposition 3.9}, the stack $\sX_{\Gamma \cap \Gamma'}$ agrees with the normalization\footnote{Note that for \cite{DR73}*{Chapitre IV, \'{e}quation (3.9.1)} to hold, one needs to take the normalization of its left side, see \cite{modular-description}*{Example 4.5.3}.} of $\sX_\Gamma \times_{\sX(1)} \sX_{\Gamma'}$. Thus, since the assumption $p \nmid N'$ ensures that the map $\sX_{\Gamma'} \ra \sX(1)$ is \'{e}tale at the image of $y_{\Gamma \cap \Gamma'}$ (see \cite{DR73}*{Chapitre IV, D\'{e}finition 3.2 onwards}), the map $\sX_{\Gamma \cap \Gamma'} \ra \sX_\Gamma$ is \'{e}tale at $y_{\Gamma \cap \Gamma'}$. In particular, letting $\ov{y}$ be a geometric point above $y_{\Gamma \cap \Gamma'}$, we have $\sO_{\sX_{\Gamma \cap \Gamma'},\, \ov{y}}^\sh \isomto \sO_{\sX_\Gamma,\, \ov{y}}^\sh$, so that 
$d_{y_{\Gamma \cap \Gamma'}} = d_{y_\Gamma}$, as desired.
\epf

\bpp[The components of $\sX_0(N)_{\bF_p}$] \label{X0N-comps}
We recall from \cite{KM85}*{Theorem 13.4.7} that the irreducible components of $\sX_0(N)_{\bF_p}$ correspond to pairs $(a, b)$ of integers $a, b \ge 0$ with $a + b = \val_p(N)$ in such a way that on the $(a, b)$-component the $p$-primary part of the cyclic subgroup that is part of the modular interpretation of $\sX_0(N)$ is generically an extension of an \'{e}tale group of order $p^b$ by the $a$-fold relative Frobenius kernel.  The ramification index $e_{(a,\, b)}$ of the strict Henselization of $\sX_0(N)$ at the generic point of the $(a, b)$-component of $\sX_0(N)_{\bF_p}$ was determined in \cite{KM85}*{Section (13.5.6)}:
\be \label{e-formula}
e_{(a,\, b)} = \phi(p^{\min(a,\, b)}).
\ee
If $p \mid N$, then the forgetful map $\sX_0(N)_{\bF_p} \ra \sX_0(\frac Np)_{\bF_p}$ sends each $(a, b)$-component with $b > 0$ to the $(a, b - 1)$-component and the $(a, 0)$-component to the $(a - 1, 0)$-component.
\epp


\blem \label{describe-cusps}
For a prime $p$ and an $n \ge 0$, the base change of the forgetful map $\sX_0(p^n) \ra \sX(1)$ along the map $\Spec(\bZ\llb q \rrb) \ra \sX(1)$ given by the Tate generalized elliptic curve over $\bZ\llb q \rrb$ is
\begin{equation*}
\tst \sX_0(p^n) \times_{\sX(1)} \bZ\llb q \rrb \cong  \bigsqcup_{\substack{a + b = n \\ a \ge b \ge 0}} \Spec(\bZ[\zeta_{p^b}]\llb q \rrb) \sqcup \bigsqcup_{\substack{a + b = n \\ 0 \le a < b}} \Spec((\bZ[\zeta_{p^a}]\llb q \rrb)[X]/(X^{p^{b - a}} - \zeta_{p^a}q)),
\end{equation*}
where, without explicating the $\bZ\llb q\rrb$-algebra structure, the last term is $\bZ[\zeta_{p^a}]\llb X \rrb$. After base change to $\bF_p$, the term indexed by $(a, b)$ in this decomposition maps to the $(a, b)$-component of $\sX_0(p^n)_{\bF_p}$. 
\elem

\bpf
By \cite{DR73}*{Chapitre VII, Corollaire 2.2}, the finite, flat $\bZ\llb q\rrb$-scheme $\sX_0(p^n) \times_{\sX(1)} \bZ\llb q \rrb$ is the normalization of $\bZ\llb q\rrb$ in the finite $\bZ\llp q \rrp$-scheme $\sX_0(p^n) \times_{\sX(1)} \bZ\llp q \rrp$. The latter parametrizes cyclic (in the sense of Drinfeld) subgroups of order $p^n$ of the Tate elliptic curve over $\bZ\llp q \rrp$, so, by \cite{KM85}*{Theorem 13.6.6}, it is
\[
\tst 
\Spec(\bZ\llp q \rrp) \sqcup \Spec(\bZ\llp q^{\frac 1{p^n}} \rrp) \sqcup \bigsqcup_{\substack{a + b = n \\ a,\, b > 0}} \Spec\ppp{\bZ\llp q \rrp[X]/(\Phi_p(\frac{X^{p^{b - 1}}}{q^{p^{a - 1}}}))}
\]
where $\Phi_p(Z) \ce Z^{p - 1} + \dotsc + Z + 1$ is the $p$-th cyclotomic polynomial. More explicitly, if $a \ge b \ge 1$, then $X/q^{p^{a - b}}$ is a $p^b$-th root of unity in the source of the surjection
\[
\tst \bZ\llp q \rrp[X]/(\Phi_p(\frac{X^{p^{b - 1}}}{q^{p^{a - 1}}})) \ra   \bZ[\zeta_{p^b}]\llp q \rrp \qxq{given by} X \mapsto \zeta_{p^b}\, q^{p^{a - b}}
\]
that must also be injective because its source and target are free $\bZ\llp q \rrp$-modules of rank $p^{b - 1}(p - 1)$. Similarly, if $1 \le a \le b$, then $X^{p^{ b- a}}/q$ is a $p^a$-th root of unity in the source of the isomorphism
\[
\tst \bZ\llp q \rrp[X]/(\Phi_p(\frac{X^{p^{b - 1}}}{q^{p^{a - 1}}})) \xra{\sim}   (\bZ[\zeta_{p^a}]\llp q \rrp)[X]/(X^{p^{b - a}} - \zeta_{p^a}q).
\]
To conclude the claimed description of $\sX_0(p^n) \times_{\sX(1)} \bZ\llb q \rrb$, it remains to note that both $\bZ[\zeta_{p^b}]\llb q \rrb$ for $a \ge b$ and $(\bZ[\zeta_{p^a}]\llb q \rrb)[X]/(X^{p^{b - a}} - \zeta_{p^a}q) \cong \bZ[\zeta_{p^a}]\llb X \rrb$ for $a \le b$ are normal (even regular). The claim about the $(a, b)$-component 
follows from \cite{KM85}*{Proposition 13.6.2 and the proof of Theorem~13.6.6}.
\epf

Before proceeding to the promised formula for the $d_y$ in \Cref{compute-dy}, we record the following consequence of \Cref{describe-cusps} that relates the present section to the analytic considerations of~\S\ref{s:global p-adic valuations}.


\blem \label{reduce-correctly}
For $L \mid N$ and a prime $p$, every cusp of $\sX_0(N)_\bC$ of denominator $L$ \up{see \uS\uref{pp:cusps} and use $\sX_0(N)(\bC) = X_0(N)(\bC)$} reduces to the $(\val_p(L), \val_p(\frac NL))$-component of $\sX_0(N)_{\bF_p}$ \up{see \uS\uref{X0N-comps}}.
\elem

\bpf
Points of $\sX_0(N)$ and of its coarse space $X_0(N)$ valued in algebraically closed fields agree and every cusp is a $\ov{\bQ}$-point, 
so the statement makes sense. Moreover, the complex uniformizations \eqref{uniformization} are compatible with forgetting some of the level, so we may assume that $N = p^n$. 
For $L = N$, the only cusp of $X_0(N)$ of denominator $L$ is $\infty$ and its punctured analytic neighborhood parametrizes pairs $(\bC^\times/q^\bZ, \langle e^{2\pi i/N} \rangle)$ with $q = e^{2\pi i z}$ and $\im z \gg 0$ (see \cite{Roh97}*{Section 1.10, Proposition 7}). Thus, by the algebraic theory of the Tate curve with its canonical subgroup $\mu_N$ (see \cite{DR73}*{Chapitre~VII, Section 1, especially, \'{e}quation (1.12.3)}), this cusp factors through the $(n, 0)$-term of the right side decomposition of \Cref{describe-cusps}, and hence reduces to the $(n, 0)$-component. For the other cusps, we induct on $n$, so we suppose that $n > 0$ and consider a cusp $\fc$ of denominator $p^\ell$ with $\ell \le n - 1$. By induction, the image of $\fc$ reduces to the $(\ell, n - \ell - 1)$-component of $\sX_0(p^{n - 1})_{\bF_p}$. Thus, if $n - \ell - 1 > 0$, then $\fc$ must reduce to the $(\ell, n - \ell)$-component of $\sX_0(p^{n})_{\bF_p}$ (see \S\ref{X0N-comps}). To bootstrap the remaining $\phi(p^{\min(n - 1,\,  1)})$ cusps with $\ell = n - 1$ (see \S\ref{pp:cusps}), it now remains to note that, by \Cref{describe-cusps}, there are precisely $\phi(p^{\min(n - 1,\,  1)})$ cusps that reduce to the $(n - 1, 1)$-component of $\sX_0(p^n)_{\bF_p}$.
\epf

\bprop \label{compute-dy}
For an $N \ge 1$, a prime $p$, 
 the generic point $y$ of the $(a, b)$-component of $\sX_0(N)_{\bF_p}$ \up{see \uS\uref{X0N-comps}}, and the valuation $d_{(a, b)}$ of the different of the extension $\sO_{\sX_0(N),\, \ov{y}}^\sh/\sO_{\sX(1),\, \ov{y}}^\sh$, 
\be \lab{d-formula}\ba
d_{(a,\,b)} = \begin{cases}b  & \x{if $a = 0$,} \\ p^{\min(a,\, b) - 1}(pb - b - 1) &\x{if $a, b \ge 1$,}  \\ 0 &\x{if $b = 0$.}\end{cases}
\ea\ee
\eprop

\bpf
By \Cref{indep-of-more} and \S\ref{X0N-comps}, we may forget level away from $p$ to assume that $N = p^n$.  As in the proof of \Cref{integral-KS}, the different of a finite, generically separable extension $R'/R$ of discrete valuation rings is the annihilator of the $R'$-module $\Hom_R(R', R)/(\mathrm{trace}_{R'/R})$. The formation of this annihilator commutes with flat base change in $R$ (after which $R$ and $R'$ may cease being discrete valuation rings). We will apply this to  $\sO^\sh_{\sX_0(N),\, \ov{y}}/\sO^\sh_{\sX(1),\, \ov{y}}$, the valuation $d_{(a,\, b)}$ of whose different we wish to compute. Namely, by \cite{DR73}*{Chapitre VII, Th\'{e}or\`{e}me 2.1}, the map $\Spec(\bZ\llb q \rrb) \ra \sX(1)$ given by the Tate generalized elliptic curve over $\bZ\llb q \rrb$ realizes its source as an \'{e}tale double cover of the formal completion of $\sX(1)$ along the cusps, and the flat base change map we will use is the resulting $\sO^\sh_{\sX(1),\, \ov{y}} \ra \bZ\llb q \rrb^\sh_{(p)}$, where the latter strict Henselization is at the generic point of the $\bF_p$-fiber of $\bZ\llb q \rrb$. In this notation, by \Cref{describe-cusps}, the resulting base change of $\sO^\sh_{\sX_0(N),\, \ov{y}}$ is
\[
\bZ[\zeta_{p^b}]\llb q \rrb_{(p)}^\sh \qxq{if} a \ge b, \qxq{and} ((\bZ[\zeta_{p^a}]\llb q \rrb)[X]/(X^{p^{b - a}} - \zeta_{p^a}q))_{(p)}^\sh \qxq{if} a \le b.
\]
These are discrete valuation rings, and the extension $\bZ[\zeta_{p^b}]\llb q \rrb_{(p)}^\sh/\bZ\llb q \rrb^\sh_{(p)}$ is a flat base change of $\bZ[\zeta_{p^b}]_{(p)}^\sh/\bZ_{(p)}^\sh$. Thus, the $a \ge b$ case of \eqref{d-formula} follows from the ramification theory of cyclotomic fields \cite{Was97}*{Proposition 2.1}. To similarly treat the $a \le b$ case, we will use subextension
\be \label{eqn:subexts}
\bZ\llb q \rrb^\sh_{(p)} \subset \bZ[\zeta_{p^a}]\llb q \rrb_{(p)}^\sh \subset ((\bZ[\zeta_{p^a}]\llb q \rrb)[X]/(X^{p^{b - a}} - \zeta_{p^a}q))_{(p)}^\sh
\ee
and the tower formula for the different \cite{Ser79}*{Chapter III, Section 4, Proposition 8} (that, notably, does not require  residue field extensions to be separable---an assumption not met here). Namely, letting $\wt{d}_{(a,\, b)}$ be the valuation of the different of the top extension, \cite{Was97}*{Proposition 2.1} now gives
\[
d_{(a,\,b)} = \wt{d}_{(a,\,b)} + \begin{cases}p^{a - 1}(pa - a - 1) &\x{if $a \ge 1$,}  \\ 0 & \x{if $a = 0$.}\end{cases}
\]
To compute $\wt{d}_{(a,\,b)}$, we note that the top subextension in \eqref{eqn:subexts} is of degree $p^{b - a}$, does not change the uniformizer $1 - \zeta_{p^a}$, induces a purely inseparable residue field extension of degree $p^{b - a}$, and, as a module, is generated by powers of $X$. Thus, since $X, X^2, \dotsc, X^{p^{b - a} - 1}$ have trace $0$ in this extension, we conclude that $d = (b - a) \phi(p^a)$. The desired formula in the remaining case $a \le b$ follows.
\epf

With the integral version of the Kodaira--Spencer isomorphism (\Cref{integral-KS}) and the explicit formulas for the $d_y$ (\Cref{compute-dy}) in hand, we are ready to characterize the $\bZ$-lattice $H^0(\sX_0(N), \Omega)$ in terms of the $p$-adic properties of Fourier expansions at all cusps in \Cref{explicit-crit}.

\blem \label{indep-of-c}
For a prime $p$, an 
$f \in H^0(\sX_0(N)_{\ov{\bQ}_p}, \omega^{\tensor k})$ with $k \ge 1$, a cusp $\fc \in X_0(N)(\ov{\bQ}_p)$ of denominator $L$, and an isomorphism $\iota\colon \ov{\bQ}_p \simeq \bC$, the valuation $v \ce \val_p(\iota(f)|_{\iota(\fc)})$ defined as in \eqref{def-val-cusp} \up{see also \uS\uref{omega}} after pullback\footnote{The only role of the auxiliary level is to ensure that $\sX(N\wt N)_\bC$ is a scheme and hence admits a complex uniformization analogous to the one discussed in \eqref{uniformization}.} to a cusp $\wt{\fc} \in X(N\wt{N})(\bC)$ above $\fc$ for a sufficiently divisible $\wt{N}$ depends only on $f$ and $\val_p(L)$ 
\up{and not on $\fc$, $\iota$, $\wt N $, or $\wt{\fc}$}\ucolon letting $\sU \subset \sX_0(N)_{\bZ_p}$ denote the open complement of those irreducible components of $\sX_0(N)_{\bF_p}$ that do not meet the reduction of $\fc$,
\be \label{char-v}
v \qxq{is the largest rational number such that}  p^{-v}f \in H^0(\sU_{\ov{\bZ}_p}, \omega^{\tensor k}).
\ee
\elem

\bpf
By \Cref{reduce-correctly}, the irreducible component of $\sX_0(N)_{\bF_p}$ that contains the reduction of $\fc$ depends only on $\val_p(L)$, so the same holds for $\sU$  and it suffices to establish \eqref{char-v}. Moreover, by scaling $f$, we may assume that $v = 0$.
 By the normality of $\sX_0(N)$, the forgetful map
\[
\pi \colon \sX(N\wt N) \ra \sX_0(N) \qxq{satisfies} \sO_{\sX_0(N)} \isomto (\pi_*(\sO_{\sX(N\wt N)}))^{\Gamma_0(N)/\Gamma(N\wt N)}
\]
and this persists after flat base change, such as to $\ov{\bZ}_p$. Thus, $\Gamma_0(N)/\Gamma(N\wt{N})$ acts transitively on the cusps $\wt{\fc} \in X(N\wt N)(\bC)$ above $\fc$ and, letting  
$\wt{\sU} \subset \sX(N\wt N)_{\bZ_p}$ be the complement of those irreducible components of $\sX(N\wt N)_{\bF_p}$ that do not meet the reduction of a fixed $\wt{\fc}$, we reduce to showing that
\be \label{eqn:no-vprime}
\xq{no} v' \in \bQ_{> 0} \qxq{satisfies} p^{-v'}f|_{\sX(N\wt N)_{\ov \bQ_p}} \in H^0(\wt{\sU}_{\ov{\bZ}_p}, \omega^{\tensor k}).
\ee
In addition, limit arguments eliminate the artificial non-Noetherian aspects: they allow us to replace $\ov{\bQ}_p$ and $\ov \bZ_p$ by a variable sufficiently large finite extension $F/\bQ_p$ and its ring of integers $\cO_F$.

For sufficiently divisible $\wt{N}$, the stack $\sX(N\wt{N})$ is a scheme (already $15 \mid \wt N$ suffices, see \cite{KM85}*{Corollary 2.7.2}) and, by \cite{KM85}*{Theorem 10.9.1}, the formal completion of $\sX(N\wt{N})_{\cO_F}$ along the closure of $\wt{\fc}$ is $\cO_F\llb q^{\frac 1{N\wt{N}}}\rrb$. Under a trivialization of the pullback of $\omega^{\tensor k}$ to this formal completion, the pullback of $f$ is described by its $q$-expansion, which is an element of $F\llb q^{\frac 1{N\wt{N}}}\rrb$ that, via $\iota$, agrees with the analytic Fourier expansion of $f$ at $\wt{\fc}$ constructed as in \S\ref{pp:q-expansions} (see \cite{DR73}*{Chapitre VII, Section~4.8}). Consequently, $\varpi_F^a f$ with $a \in \bZ$ extends to a section of $\omega^{\tensor k}$ over a neighborhood of the closure of $\wt \fc$ in $\sX(N\wt{N})_{\cO_F}$ if and only if $\frac a{e_F} \ge 0$, where $e_F$ is the absolute ramification index of $F$. The complement in $\wt{\sU}_{\cO_F}$ of the union of such a neighborhood with $\sX(N\wt N)_F$ is of codimension $\ge 2$, so, since $\sX(N\wt{N})_{\cO_F}$ is Cohen--Macaulay, \cite{EGAIV2}*{Th\'{e}or\`{e}me 5.10.5} ensures that $\varpi_F^a f$ extends to a neighborhood of the closure of $\wt{\fc}$ in $\sX(N\wt N)_{\cO_F}$ if and only if $\varpi_F^a f \in H^0(\wt{\sU}_{\cO_F}, \omega^{\tensor k})$. As $F$ grows, this achieves the promised \eqref{eqn:no-vprime}.
\epf

\bprop \label{explicit-crit}
For a prime $p$ and a cuspform $f \in H^0(\sX_0(N)_{\ov \bQ_p}, \omega^{\tensor 2}(-\cusps))$, 
the differential $\omega_f \in H^0(X_0(N)_{\ov{\bQ}_p}, \Omega^1)$ lies in the $\ov \bZ_{p}$-lattice $H^0(X_0(N)_{\ov\bZ_{p}}, \Omega) \cong H^0(\sX_0(N)_{\ov\bZ_{p}}, \Omega)$ \up{see \eqref{X0N-lattice}} if and only if for every $0 \le \ell \le \val_p(N)$ and some \up{equivalently, any} cusp $\fc \in X_0(N)(\ov{\bQ}_p)$ whose denominator $L$ satisfies $\ell = \val_p(L)$ and some \up{equivalently, any} isomorphism $\iota\colon \ov{\bQ}_p \simeq \bC$,
we have
\be \label{test-ineq}
\val_p(\iota(f)|_{\iota(\fc)}) \ge \begin{cases}
-\val_p(N)  & \x{if }~\val_p(L) = 0, \\
-\val_p(\frac NL) + \frac{1}{p  - 1} &\x{if }~0 < \val_p(L) < \val_p(N), \\
0 &\x{if }~\val_p(L) = \val_p(N).\end{cases}
\ee
For such an $f$ defined over a number field $K$ with ring of integers $\cO_K$, 
we have $\omega_f \in H^0(X_0(N)_{\cO_K}, \Omega)$ 
if and only if \eqref{test-ineq} holds for all primes $p$ and all embeddings $K \hra \ov{\bQ}_p$. 
\eprop

\bpf
The last assertion follows from the rest because any finite free $\cO_K$-module $M$ (such as $H^0(X_0(N)_{\cO_K}, \Omega) \cong H^0(X_0(N), \Omega) \tensor_{\bZ} \cO_K$, see \S\ref{rel-dual}) agrees with the set of $m \in M \tensor_{\cO_K} K$ whose image in $M \tensor_{\cO_K} \ov \bQ_p$ lies in $M \tensor_{\cO_K} \ov \bZ_p$ for every prime $p$ and every embedding $K \hra \ov{\bQ}_p$.
For \eqref{test-ineq} itself, we begin by recalling the integral Kodaira--Spencer isomorphism of Propositions \ref{integral-KS} and \ref{compute-dy}: letting $y$ range over the generic points of the irreducible components of $\sX_0(N)_{\bF_p}$, with $d_y$ as there,
\[
\tst  \Omega_{\sX_0(N)_{\bZ_p}/\bZ_p} \cong \omega^{\tensor 2}(-\cusps + \sum_{y} d_y\ov{\{y\}}).
\]
Consequently, the characterization of $\val_p(f|_\fc)$ given in \Cref{indep-of-c} together with \cite{EGAIV2}*{Th\'{e}or\`{e}me 5.10.5} (applied as in the preceding proof) show that $\omega_f \in H^0(\sX_0(N)_{\ov\bZ_{p}}, \Omega)$ if and only if for every $y$ and some cusp $\fc$ that reduces modulo $p$ on $\ov{\{ y\}}$, we have $d_y/e_y \ge -\val_p(\iota(f)|_{\iota(\fc)})$ where $e_y$ is the absolute ramification index of the discrete valuation ring $\sO^\sh_{\sX_0(N),\, y}$. By \Cref{reduce-correctly}, a cusp $\fc$ of denominator $L$ reduces to the $(\val_p(L), \val_p(\frac NL))$-component of $\sX_0(N)_{\bF_p}$ for which, by \eqref{e-formula}, the corresponding $e_y$ is $\phi(p^{\min(\val_p(L),\, \val_p(\frac NL))})$. To arrive at
\eqref{test-ineq}, it then remains to use \eqref{d-formula}.
\epf

We are ready for our main integrality result for normalized newforms. 

\bthm \label{wf-integral}
For a number field $K$ and an $f \in H^0(\sX_0(N)_K, \omega^{\tensor 2}(-\cusps))$ whose base change along some $K \hra \bC$ is a $\ov{\bZ}$-linear combination of normalized newforms on $\Gamma_0(N)$ \up{see \uS\uref{omega}},
\[
 \omega_f \in H^0(X_0(N)_{\cO_K}, \Omega) \cong H^0(\sX_0(N)_{\cO_K}, \Omega) \qxq{inside} H^0(X_0(N)_K, \Omega^1) \cong H^0(\sX_0(N)_K, \Omega^1)
\]
\up{identification by flat base change and \eqref{X0N-lattice}}, and, more generally, 
for any $\Gamma_1(N) \subset \Gamma \subset \Gamma_0(N)$,
\[
\omega_f \in H^0((X_\Gamma)_{\cO_K}, \Omega) \subset H^0((X_\Gamma)_K, \Omega^1) \qxq{and} \omega_f \in H^0((\sX_\Gamma)_{\cO_K}, \Omega) \subset H^0((\sX_\Gamma)_K, \Omega^1).
\]
\ethm

\bpf
A Galois conjugate of a newform is still a newform (see \cite{DI95}*{Corollary 12.4.5}), so the assumption on $f$ does not depend on the choice of an embedding $K \hra \bC$. For the first assertion, by \Cref{explicit-crit}, we need to check that for every prime $p$, every embedding $\gL\colon K \hra \ov{\bQ}_p$, every $0 \le \ell \le \val_p(N)$, some cusp $
\fc \in X_0(N)(\bC)$ whose denominator $L$ satisfies $\val_p(L) = \ell$, and some isomorphism $\iota \colon \ov{\bQ}_p \simeq \bC$, the valuation $\val_p(\iota(\lambda(f))|_{\fc})$ satisfies the bound \eqref{test-ineq}. This, however, follows from \Cref{cor:globalbdfcelliptic}.


To deduce that $\omega_f \in H^0((X_\Gamma)_{\cO_K}, \Omega)$ for an arbitrary $\Gamma$, 
since $\Omega_{(X_\Gamma)_{\cO_K}/\cO_K}$ is a Cohen--Macaulay $\sO_{(X_\Gamma)_{\cO_K}}$-module of full support (see \S\ref{rel-dual}), by \cite{EGAIV2}*{Th\'{e}or\`{e}me 5.10.5}, it suffices to show the containment $\omega_f \in H^0((X_\Gamma^\reg)_{\cO_K}, \Omega)$. Thus, \Cref{coarse-KS} and the settled case $\Gamma = \Gamma_0(N)$ reduce us to showing that for every height $1$ point $y \in X_\Gamma$ with images $y' \in X_0(N)$ and $y'' \in X(1)$, the extensions
\[
\sO_{X(1),\, y''} \subset \sO_{X_0(N),\, y'} \subset \sO_{X_\Gamma,\, y} \qxq{of discrete valuation rings satisfy} d_{y/y''} \ge e_{y/y'}d_{y'/y''}
\]
where $d_*$ (resp.,~$e_*$) is the valuation of the different (resp.,~the ramification index) of the indicated subextension. This inequality is immediate from the tower formula for the different \cite{Ser79}*{Chapter III, Section 4, Proposition 8}. To likewise deduce that also $\omega_f \in H^0((\sX_\Gamma)_{\cO_K}, \Omega)$, one uses \Cref{integral-KS} instead.
\epf

\brem \label{wf-primitive}
For a \emph{normalized} cuspform $f$ of weight $2$ on $\Gamma_0(N)$, if $\omega_f$ lies in $H^0(X_0(N), \Omega)$, then it is a primitive \up{that is, not divisible by any $m > 1$} element of this $\bZ$-lattice. In fact, 
then it is primitive even in the $\bZ$-lattice $H^0(X_\Gamma^\sm, \Omega^1)$ for every $\Gamma_1(N) \subset \Gamma \subset \Gamma_0(N)$. Indeed, the finite maps $X_1(N) \ra X_\Gamma \ra X_0(N)$ are flat away from finitely many closed points (see \cite{EGAIV2}*{Proposition~6.1.5}), so they restrict to maps $X_1(N)^\sm \ra X_\Gamma^\sm \ra X_0(N)^\sm$ away from these points. By \cite{EGAIV2}*{Th\'{e}or\`{e}me 5.10.5}, removing finitely many closed points has no effect on $H^0((-)^\sm, \Omega^1)$, so we obtain the inclusions
\be \label{wf-primitive-eq}
H^0(X_0(N)^\sm, \Omega^1) \subset H^0(X_\Gamma^\sm, \Omega^1) \subset H^0(X_1(N)^\sm, \Omega^1),
\ee
which 
reduce primitivity 
to the case $\Gamma = \Gamma_1(N)$ settled as in \cite{Ste89}*{proof of Theorem 1.6} via $q$-expansions.
\erem


\section{Rational singularities of $X_0(N)$} \lab{sec:rat-sing}

For studying the Manin constant, the $\bZ$-lattice $H^0(\cJ_0(N), \Omega^1)$ given by the global differentials on the N\'{e}ron model $\cJ_0(N)$ of the modular Jacobian $J_0(N) \ce \Pic^0_{X_0(N)_\bQ/\bQ}$ is more convenient than the \emph{a priori} larger $H^0(X_0(N), \Omega)$  because it is functorial with respect to both a modular parametrization $J_0(N) \surjects E$ and its dual $E \ra J_0(N)$. Thanks to this functoriality, the Manin conjecture implies that the differential $\omega_f$ associated to the normalized newform $f$ determined by $E$ should lie in $H^0(\cJ_0(N), \Omega^1)$, and we show this unconditionally in \Cref{Neron-integral} whenever $X_0(N)$ has rational singularities. 
We show in \Cref{rat-sing-main} that this assumption holds in a vast number of cases.

\bpp[Rational singularities] \label{pp:rat-sing}
We recall from \cite{Lip69}*{Definition 1.1} that a Noetherian, normal, $2$-dimensional, local domain $R$ has \emph{rational singularities} if $H^1(Z, \sO_Z) = 0$ for some proper, birational morphism $Z \ra \Spec(R)$ with $Z$ regular. In this case, by \cite{Lip69}*{Proposition 1.2}, we have $H^1(Z, \sO_Z) = 0$ for every proper, birational $Z \ra \Spec(R)$ with $Z$ merely normal,  and any such $Z$ also has rational singularities.
\epp


The following result summarizes the relevance of rational singularities for our purposes.

\bprop \label{prop:Raynaud-review}
For an excellent discrete valuation ring $R$ with fraction field $K$ and residue field $k$, a normal, proper, flat relative curve $X$ over $R$ such that $X_{\ov{K}}$ is irreducible and $X^\sm \cap X_{k} \neq \emptyset$, the Jacobian $J \ce \Pic^0_{X_K/K}$, and its N\'{e}ron model $\cJ$ over $R$, the map $\Pic^0_{X/R} \ra \cJ^0$ is an isomorphism if and only if the~inclusion
\be \label{lattice-crit}
H^0(\cJ, \Omega^1) \hra H^0(X, \Omega) \qxq{is an equality inside} H^0(J, \Omega^1) \cong H^0(X_K, \Omega^1),
\ee
which happens if and only if $X$ has rational singularities\uscolon more generally, letting $\pi \colon Z \surjects X$ be a 
proper, birational morphism with $Z$ regular,  $H^0(X, \Omega)/H^0(\cJ, \Omega^1) \simeq H^0(X, R^1\pi_*(\sO_Z))$.
\eprop

\bpf
We have $R \isomto H^0(X, \sO_X)$ because this finite morphism of normal domains (see \cite{SP}*{Lemma~\href{https://stacks.math.columbia.edu/tag/0358}{0358}}) is, by checking over $\ov{K}$, an isomorphism. Thus, since $X^\sm \cap X_k \neq \emptyset$, by \cite{Ray70c}*{Th\'{e}or\`{e}me 8.2.1}, the map $X \ra \Spec R $ is cohomologically flat and $\Pic^0_{X/R}$ is a separated, smooth $R$-group scheme (see also \cite{BLR90}*{Section 8.4, Proposition 2}). In particular, the N\'{e}ron property supplies the map $\Pic^0_{X/R} \ra \cJ$.
Moreover,  
the deformation-theoretic \cite{BLR90}*{Section 8.4, Theorem~1} gives the identification $H^1(X, \sO_{X}) \cong \Lie(\Pic^0_{X/R})$ of finite free $R$-modules. Consequently, by the Grothendieck--Serre duality (see \cite{Con00}*{Theorem 5.1.2}),
\begin{equation} \label{GS-id}
\tst H^0(\Pic^0_{X/R}, \Omega^1) = \Hom_{R}(\Lie(\Pic^0_{X/R}), R) = H^0(X, \Omega)  \qxq{in} H^0(J, \Omega^1)\cong H^0(X_K, \Omega^1).
\end{equation}
Thus, there is the claimed inclusion $H^0(\cJ, \Omega^1) \hra H^0(X, \Omega)$, which, since all the global differentials on $\cJ$ are translation invariant (see \cite{BLR90}*{Section 4.2, Propositions 1 and 2}), is an equality if $\Pic^0_{X/R} \cong \cJ^0$. Conversely, if the inclusion is an equality, then the separated morphism $\Pic^0_{X/R} \ra \cJ^0$ is an isomorphism on Lie algebras, that is, it is \'{e}tale (see \cite{EGAIV4}*{Corollaire 17.11.2}), and hence, by checking the triviality of its kernel over $K$ (see \cite{EGAIV4}*{Th\'{e}or\`{e}me 18.5.11 c)}), even an isomorphism.

By Lipman's \cite{SP}*{Theorem \href{https://stacks.math.columbia.edu/tag/0BGP}{0BGP}}, a desingularization $\pi \colon Z \surjects X$ exists (ensuring this is the only role of the excellence of $R$). Moreover, by the above and the proof of \cite{BLR90}*{Section 9.7, Theorem~1}, the map $\pi^*\colon H^1(X, \sO_X) \ra H^1(Z, \sO_Z)$ is identified with the map $\Lie(\Pic^0_{X/R}) \hra \Lie(\cJ)$. By forming duals, the finite length cokernel of the latter is isomorphic to $H^0(X, \Omega)/H^0(\cJ, \Omega^1)$. On the other hand, Grothendieck's theorem on formal functions \cite{EGAIII1}*{Corollaire 4.1.7} shows that $H^2(X, \sO_X) = 0$. The above and the spectral sequence $H^i(X, R^j\pi_*(\sO_Z)) \Ra H^{i + j}(Z, \sO_Z)$ then give the claimed
\[
H^0(X, \Omega)/H^0(\cJ, \Omega^1) \simeq H^1(Z, \sO_Z)/\pi^*(H^1(X, \sO_X) ) \cong H^0(X, R^1\pi_*(\sO_Z)).
\]
Since $R^1\pi_*(\sO_Z)$ is supported at the singular points of $X$ and vanishes if and only if $X$ has rational singularities (see \S\ref{pp:rat-sing}), the latter happens if and only if \eqref{lattice-crit} holds.
\epf

\beg\label{XG-case}
\Cref{prop:Raynaud-review} applies to $R = \bZ_{(p)}$ and $X = (X_\Gamma)_{\bZ_{(p)}}$ for every prime $p$ and every $\Gamma_1(N) \subset \Gamma \subset \Gamma_0(N)$. Indeed, $X_1(N)^\sm \cap X_1(N)_{\bF_p} \neq \emptyset$ by \cite{KM85}*{Section (13.5.6)}, so, since, by \cite{EGAIV2}*{Proposition 6.1.5}, the finite map $X_1(N) \surjects X_\Gamma$ is flat away from finitely many points, also $X_\Gamma^\sm \cap (X_\Gamma)_{\bF_{p}} \neq \emptyset$. More generally, it also applies to any $(X_{\Gamma\, \cap\, \wt{H}})_{\bZ_{(p)}}$ with $\Gamma$ as before and $\Gamma_{\mathrm{diag}}(M) \subset \wt{H} \subset \GL_2(\wh{\bZ})$ the preimages of subgroups 
\[
\tst \{ \abcd {x_1} \ \ {x_2}  \vert\, x_i \in (\bZ/M\bZ)^\times \} \subset H \subset \GL_2(\bZ/M\bZ)
\]
for some $M$ coprime to $N$: indeed, the identity $\tst \abcd 0  1  M  0  \abcd abcd \abcd 0  1  M  0 \i = \abcd {d}{  \frac{c}{M}}  {Mb}  a $ gives 
\[
\tst \abcd 0  1  M  0  \Gamma_0(M^2) \abcd 0  1  M  0 \i  = \Gamma_{\mathrm{diag}}(M),
\]
so, by \cite{DR73}*{Chapitre IV, Proposition 3.19 (see also \'{e}quation (3.14.1))}, we obtain an isomorphism 
\[
X_{\Gamma\, \cap\, \Gamma_0(M^2)} \simeq X_{\Gamma\, \cap\, \Gamma_{\mathrm{diag}}(M)},
\]
to the effect that we may now instead use the resulting finite flat map
\be \label{exceptional-map-ann}
X_{\Gamma\, \cap\, \Gamma_0(M^2)} \simeq X_{\Gamma\, \cap\, \Gamma_{\mathrm{diag}}(M)} \surjects X_{\Gamma\, \cap\, \wt{H}} \qxq{to conclude that} X_{\Gamma\, \cap\, \wt{H}}^\sm \cap (X_{\Gamma\, \cap\, \wt{H}})_{\bF_p} \neq \emptyset.
\ee
\eeg

By \Cref{prop:Raynaud-review}, controlling the lattice $H^0(\cJ_0(N), \Omega^1)$ relevant for the Manin constant hinges on  positively answering the pertinent cases of the following question considered by Raynaud \cite{Ray91}.


\begin{question} \label{rat-sing-q}
Does $X_0(N)$ have rational singularities for every $N \ge 1$?
\end{question}

We know of no $N$ for which the answer is negative, in fact, we exhibit a positive one for a large class of $N$ in \Cref{rat-sing-main}, which subsumes \cite{Ray91}*{Th\'{e}or\`{e}me 2}. The new cases in \Cref{rat-sing-main} will come by bootstrapping from \Cref{low-genus}, whose proof uses the following lemma.

\blem \label{const-in-Z}
For $\Gamma_1(N) \subset \Gamma \subset \Gamma' \subset \Gamma_0(N)$, the Jacobians $J_\Gamma$ and $J_{\Gamma'}$ of  $(X_{\Gamma})_\bQ$ and $(X_{\Gamma'})_\bQ$, and isogenous newform elliptic curve quotients\footnote{We say that a surjection of abelian varieties $\pi\colon J_\Gamma \surjects E$ is a \emph{newform quotient} of $J_\Gamma$ if $J_\Gamma/(\Ker(\pi)^0)$ is associated to a newform on $\Gamma$ via the Eichler--Shimura construction (compare, for instance, with \cite{Roh97}*{Section 3.7} or \cite{DS05}*{Definition 6.6.3}). We call such an $E$ a \emph{newform elliptic curve quotient} if, in addition, $E$ is an elliptic curve. } $\pi \colon J_\Gamma \surjects E$ and $\pi' \colon  J_{\Gamma'} \surjects E'$, if $\Ker(\pi)$ and $\Ker(\pi')$ are connected, then there is an isogeny $e \colon E \ra E'$ such that the Manin constants $c_\pi$ and $c_{\pi'}$ satisfy 
\[
c_{\pi'} = c_\pi \cdot \#\Coker(\Lie \cE \xra{\Lie e} \Lie \cE') \qx{where $\cE$ and $\cE'$ are the N\'{e}ron models of $E$ and $E'$}.
\]
Moreover, $c_\pi \in \bZ$ for any newform elliptic curve quotient $\pi\colon J_\Gamma \surjects E$ \up{regardless of $\Ker(\pi)$}.
\elem

\bpf
Everything was settled in \cite{Manin-semistable}*{Lemma 2.12} except for the assertion that $c_\pi \in \bZ$ in the case when $\Ker(\pi)$ is nonconnected. To reduce the latter to the case when $\Ker(\pi)$ is connected, it suffices to consider the factorization $J_\Gamma \surjects J_\Gamma/(\Ker(\pi)^0) \surjects E$ of $\pi$.
\epf

\bprop \label{low-genus}
For the following $\Gamma \subset \GL_2(\wh{\bZ})$, the modular curve $X_\Gamma$ has rational singularities\ucolon
\benumr
\m \label{LG-i}
any $\Gamma_1(N) \subset \Gamma \subset \Gamma_0(N)$ such that $(X_\Gamma)_\bQ$ has genus $\le 1$\uscolon

\m\label{LG-iii}
$\Gamma = \Gamma_0(9)\, \cap\, \wt{C}_3$ with $\wt{C}_3 \subset \GL_2(\wh{\bZ})$ the preimage of the cyclic subgroup $C_3 \subset \GL_2(\bZ/2\bZ) \simeq~\!S_3$.
\eenum
\eprop

\bpf
We will use \Cref{prop:Raynaud-review}, which applies thanks to \Cref{XG-case} (note that $\Gamma_{\mathrm{diag}}(2) = \Gamma(2)$), so we let $\cJ$ be the N\'{e}ron model over $\bZ$ of the Jacobian of $(X_\Gamma)_\bQ$. In particular, we may assume that the genus of $(X_{\Gamma})_\bQ$ is positive: indeed, in the genus $0$ case the spaces in \eqref{lattice-crit} vanish. Then the genus of $(X_{\Gamma})_\bQ$ is $1$: indeed, for \ref{LG-iii}, the genus of $X_0(36)_\bQ$ is $1$, so, due to the surjection
\be \label{exceptional-map}
X_0(36) \xra{\eqref{exceptional-map-ann}} X_{\Gamma_0(9)\, \cap\, \wt{C}_3},
\ee
that of $(X_{\Gamma_0(9)\, \cap\, \wt{C}_3})_\bQ$ is $\le 1$ (in fact, it is $1$, but we do not need to sidestep into showing this).

In \ref{LG-i}, the map $(X_\Gamma)_\bQ \ra X_0(N)_\bQ$ is then an isogeny of elliptic curves over $\bQ$ (see \cite{Sch09}*{Corollary~1.2~(i)}), so that $N < 50$ (compare with \Cref{X0N-genus1} below). By, for instance, \Cref{const-in-Z} and Cremona's \cite{ARS06}*{Theorem 5.2}, the Manin conjecture holds for the optimal parametrization of the elliptic curve $(X_\Gamma)_\bQ$ by the modular curve $(X_\Gamma)_\bQ$: the differential $\omega_f$ associated to the unique normalized newform on $\Gamma_0(N)$ lies in $H^0(\cJ, \Omega^1)$. However, by \Cref{wf-integral} and \Cref{wf-primitive}, this $\omega_f$ is also a primitive element of the lattice $H^0(X_\Gamma^\sm, \Omega^1)$. Since $H^0(\cJ, \Omega^1) \subset H^0(X_\Gamma, \Omega) \subset H^0(X_\Gamma^\sm, \Omega^1)$ (see \eqref{lattice-crit}), these $\bZ$-modules are then all generated by $\omega_f$, so \Cref{prop:Raynaud-review} gives \ref{LG-i}.

In \ref{LG-iii}, we have reduced to the $\bQ$-fiber of the map \eqref{exceptional-map} being an isogeny of elliptic curves of degree $3$ (compare with \cite{DS05}*{bottom of page 66}). Thus, by \cite{LMFDB}*{elliptic curve \href{http://www.lmfdb.org/EllipticCurve/Q/36a1/}{36a1}}, it
must be the unique degree $3$ isogeny with source $X_0(36)_\bQ$. By \cite{LMFDB}*{elliptic curve \href{http://www.lmfdb.org/EllipticCurve/Q/36a3/}{36a3}}, the Manin constant of the resulting \emph{nonoptimal} modular parametrization of the elliptic curve $(X_{\Gamma_0(9)\, \cap\, \wt{C}_3})_\bQ$ is $1$, so the pullback of the N\'{e}ron differential $\omega_\cJ$ is the differential $\omega_f$ associated to the unique normalized newform on $\Gamma_0(36)$. In particular, by \Cref{wf-integral} and \Cref{wf-primitive}, this pullback is a primitive element of $H^0(X_0(36)^\sm, \Omega^1)$ and, to conclude in the same way as for \ref{LG-i}, we use the inclusions
\[
H^0(\cJ, \Omega^1) \overset{\eqref{lattice-crit}}{\subset} H^0(X_{\Gamma_0(9)\, \cap\, \wt{C}_3}, \Omega) \subset H^0(X_{\Gamma_0(9)\, \cap\, \wt{C}_3}^\sm, \Omega^1) \subset H^0(X_0(36)^\sm, \Omega^1),
\]
the last one of which is obtained as \eqref{wf-primitive-eq} by using the map $X_0(36) \ra X_{\Gamma_0(9)\, \cap\, \wt{C}_3}$.
\epf

\beg \label{X0N-genus1}
The $\bZ$-curve $X_0(N)$ has rational singularities for $N = 1, \dotsc, 21, 24, 25, 27, 32, 36, 49$: these are the $N$ for which $X_0(N)_\bQ$ has genus $\le 1$, that is, for which \Cref{low-genus}~\ref{LG-i} applies.
\eeg

To upgrade the finite list of \Cref{low-genus} to infinite families, in \Cref{shave-level} we develop general criteria for rational singularities of $X_0(N)$. For this, we use the following lemmas.

\blem \label{rat-sing-inv}
For an action of a finite group $G$ on a ring $R$, if both $R$ and $R^G$ are complete, $2$-dimensional, Noetherian, normal, local domains \up{when $\#G$ is invertible in $R$, it suffices to assume this for $R$} and $R$ has rational singularities, then, for every proper birational $Z \ra \Spec(R^G)$ with $Z$ normal, $\#G$ kills $H^1(Z, \sO_Z)$, in particular, $R^G$ also has rational singularities when $\#G \in R^\times$.
\elem

\bpf
We may assume  that $G$ acts faithfully and  begin with the parenthetical claim, in which $\#G \in R^\times$ and we consider
the $R^G$-linear operator $\sR\colon r \mapsto \frac{1}{\#G}\sum_{g \in G} gr$ that fixes each $a \in R^G$. By applying $\sR$ to any equality $a = \sum r_i a_i$ with $a, a_i \in R^G$ and $r_i \in R$, we get $ R^G \cap IR = I$ for any ideal $I \subset R^G$. In particular, $R^G$ inherits the ascending chain condition, so is a Noetherian domain. The $0$-dimensional localization $R \tensor_{R^G} K^G$ of $R$ is the fraction field $K$ of $R$, so, by Galois theory, it is a finite extension of the fraction field $K^G$ of $R^G$. We choose a $K^G$-basis $r_1, \dotsc, r_n \in R$ for $K$ and consider the $R^G$-module map $R \ra \bigoplus_{i = 1}^n R^G$ given by $r \mapsto (\sR(rr_i))_{i = 1}^n$. This map is injective because the version of $\sR$ for $K$ cannot kill $\sum_{i = 1}^n rr_i K^G = rK$ unless $r = 0$. Thus, $R$ is a finite $R^G$-module,\footnote{Finite generation of $R$ as an $R^G$-module holds much more generally, even for noncommutative $R$, see \cite{Mon80}*{Corollary 5.9}.} so $R^G \hra R$ is a finite, local map of Noetherian local domains that splits via $\sR$ as a map of $R^G$-modules, and hence $R^G$ is a complete, $2$-dimensional, Noetherian, normal, local domain.

	


Returning to general $G$, for $Z$ as in the statement we let $\wt{Z} \ra \Spec R$ be the proper birational map obtained by normalizing the base change $Z_R$ in $K \ce \Frac(R)$ (the finite type of $\wt{Z}$ over $R$ follows from \cite{EGAIV2}*{Proposition 7.8.6 (ii)}). The $G$-action on $R$ induces a compatible $G$-action on $\wt{Z}$, for which the integral map $\pi\colon \wt{Z} \ra Z$ is equivariant (with $G$ acting trivially on $Z$). Thus, since $Z$ is normal, $\pi$ induces an isomorphism $\wt{Z}/G \isomto Z$. Consequently, the trace map $s \mapsto \sum_{g \in G} gs$ defines an $\sO_Z$-linear morphism $\pi_*(\sO_{\wt{Z}}) \ra \sO_Z$ whose postcomposition with $\sO_Z \ra \pi_*(\sO_{\wt{Z}})$ is multiplication by $\#G$ on $\sO_Z$. The rational singularities assumption gives $H^1(Z, \pi_*(\sO_{\wt{Z}})) = 0$ (see \S\ref{pp:rat-sing}), so the induced maps on $H^1(Z, -)$ show that $\#G$ kills the $R^G$-module $H^1(Z, \sO_Z)$, as claimed. In particular, if $\#G$ is a unit in $R$, so also in $R^G$, then $H^1(Z, \sO_Z) = 0$. By choosing a $Z$ that is regular (see Lipman's \cite{SP}*{Theorem \href{https://stacks.math.columbia.edu/tag/0BGP}{0BGP}}), we then conclude that $R^G$ indeed has rational singularities.
\epf



\blem \lab{rat-sing-crit}
For a prime $p$, we have $p \nmid \#(\Aut(x)/\{\pm 1\})$ for each $x \in \sX_0(N)(\ov{\bF}_p)$ whenever
\benumr
\m \lab{RSC-i}
$p \ge 5$\uscolon or

\m\lab{RSC-ii}
$p = 3$ and there is a prime $p' \mid N$ with $p' \equiv 2 \bmod 3$\uscolon or

\m\lab{RSC-iii}
$p = 2$ and there is a prime $p' \mid N$ with $p' \equiv 3 \bmod 4$.
\eenum
\elem

\bpf
By \cite{modular-description}*{proof of Theorem 6.7}, for cuspidal $x$ we have $\Aut(x) = \{\pm 1\}$, so we may assume that $x$ corresponds to an elliptic curve $E$ over $\ov{\bF}_p$ equipped with a cyclic (in the sense of Drinfeld) subgroup $C \subset E$ of order $N$. Thus, since $\Aut(x) \subset \Aut(E)$ and $\# \Aut(E) \mid 24$ (see \cite{KM85}*{Corollary~2.7.2}), we have \ref{RSC-i}. For \ref{RSC-ii} and \ref{RSC-iii}, we consider the action of $\Aut(x)$ on $E[p'](\ov{\bF}_p)$. Firstly, if $p'$ is odd (resp.,~if $p' = 2$), then this action (resp.,~the induced action of $\Aut(x)/\{\pm 1\}$) is faithful, see \cite{KM85}*{Corollary 2.7.2}. Thus, since it also preserves both the Weil pairing and the cyclic subgroup $C' \ce C \cap E[p'] \subset E[p']$, any $p$-Sylow subgroup $G$ of $\Aut(x)$ (resp.,~of $\Aut(x)/\{\pm 1\}$) acts semisimply on $E[p']$ and embeds into $\Aut(C') \cong (\bZ/p'\bZ)^\times$. In particular, $\#G \mid p' - 1$, so that $G = 1$ in \ref{RSC-ii} and $G = \{\pm 1\}$ in \ref{RSC-iii}.
\epf



\bprop \label{def-thy}\label{shave-level}
For a prime $p$, an $N \in \bZ_{> 0}$, and $n \ce \val_p(N)$,
\benumr
\m \label{DT-i}
if $p \ge 5$\uscolon or

\m \label{DT-iii}
if $p = 3$ and there is a  $p' \mid N$ with $p' \equiv 2 \bmod 3$\uscolon or

\m \label{DT-iv}
if $p = 3$ and either $X_0(3^{n} \cdot 7)_{\bZ_{(3)}}$ or $(X_{\Gamma_0(3^{n})\, \cap\, \wt{C}_3})_{\bZ_{(3)}}$ has rational singularities where the subgroup $\wt{C}_3 \subset \GL_2(\wh{\bZ})$ is the preimage of the cyclic subgroup $C_3 \subset \GL_2(\bZ/2\bZ)$\uscolon or

\m \label{DT-v}
if $p = 2$ and there is a  $p' \mid N$ with $p' \equiv 3 \bmod 4$\uscolon or

\m \label{DT-vi}
if $p = 2$ and $X_0(2^{n} \cdot 5)_{\bZ_{(2)}}$ has rational singularities and $N \neq 2^n$\uscolon or 

\m \label{DT-ii}
if $p = 3$ \up{resp.,~if $p = 2$} and for the level $\Gamma_0(p^n)$ universal deformation ring $R$ of $(E, C)$, where $E/\ov{\bF}_p$ is the elliptic curve with $j = 0$ and $C \subset E$ the cyclic \up{in the sense of Drinfeld} subgroup of order $p^n$, and for every subgroup $G' \subset G \ce \Aut(E)/\{\pm 1\}$ of $p$-power order, $R^{G'}$ has rational singularities \up{resp.,~same, but if $N \neq 2^n$, then may restrict to cyclic $G'$}\uscolon
\eenum
then $X_0(N)_{\bZ_{(p)}}$ has rational singularities.
\eprop

\bpf
Since $X_0(N)_{\bZ_{(p)}}$ is regular away from the $\ov{\bF}_p$-points $x$ with $j = 0$ or $j = 1728$ (see \cite{modular-description}*{Theorem 6.7}), we need to show that $\sO_{X_0(N),\, x}$ has rational singularities for every such $x$. By Lipman's \cite{SP}*{Theorem \href{https://stacks.math.columbia.edu/tag/0BGP}{0BGP}}, there is a proper birational map $Z \ra \Spec(\sO_{X_0(N),\, x})$ with $Z$ regular and, by \cite{EGAIV2}*{Corollaire 6.4.2, Scholie 7.8.3 (v)} (see also \cite{Gre76}*{Corollary 5.6}), the $\wh{\sO}_{X_0(N),\, x}^\sh$-base change of $Z$ is regular. Thus, by checking the vanishing $H^1(Z, \sO_Z) = 0$ after flat base change, $\sO_{X_0(N),\, x}$ has rational singularities if and only if so does $\wh{\sO}^\sh_{X_0(N),\, x}$. However, by \cite{DR73}*{Chapitre I, Section (8.2.1)} (or \cite{Ols06}*{Theorem 2.12}), we have
\be \label{X0N-sh-id}
\wh{\sO}_{X_0(N),\, x}^\sh \cong (\wh{\sO}_{\sX_0(N),\, x}^\sh)^{\Aut(x)/\{\pm 1\}},
\ee
and $\wh{\sO}_{\sX_0(N),\, x}^\sh$ is regular by \cite{KM85}*{Theorem 6.6.1}. Thus, \ref{DT-i}, \ref{DT-iii}, and \ref{DT-v} follow from \Cref{rat-sing-inv,rat-sing-crit}.

In \ref{DT-ii}, the unique $E$ is supersingular, $C$ is the kernel of the $p^n$-fold relative Frobenius (see \cite{KM85}*{Lemma 12.2.1}) and hence is preserved by $\Aut(E)$, and $x$ maps to $(E, C)$. Moreover, $E[\frac{N}{p^n}]$ is \'{e}tale, so its subgroups $C' \subset E[\frac{N}{p^n}]$ deform uniquely, and hence $R \cong \wh{\sO}^\sh_{\sX_0(N),\, x}$ by the modular interpretation of $\sX_0(N)$. Since $G$ injects into (in fact, equals to) $\SL_2(\bF_3)/\{\pm 1\}$ if $p = 2$ and $\SL_2(\bF_2)$ if $p = 3$ (see \cite{KM85}*{Corollary 2.7.2}, also \cite{Del75}*{Proposition 5.9 (IV)--(V), Section 7.4}), its $p$-Sylow subgroup $G^{(p)} \subset G$ is normal. Thus, the same holds for $H \ce \Aut(x)/\{\pm 1\} \subset G$, to the effect that $R^{H} \cong (R^{H^{(p)}})^{H/H^{(p)}}$. The assumption of \ref{DT-ii} ensures that $R^{H^{(p)}}$ has rational singularities, so, by \Cref{rat-sing-inv}, so does $R^{H} \cong \wh{\sO}_{X_0(N),\, x}^\sh$ (see \eqref{X0N-sh-id}). To conclude \ref{DT-ii}, we note that $H$ is cyclic when $p = 2$ and $N \neq 2^n$: then the preimage of $H$ in $\Aut(E)$ lies in the cyclic group $(\bZ/p'\bZ)^\times$ for an odd prime $p' \mid N$ (see the proof of \Cref{rat-sing-crit}).

To show that \ref{DT-iv} and \ref{DT-vi} follow from \ref{DT-ii}, we set $\Gamma \ce \Gamma_0(3^n \cdot 7)$ or $\Gamma \ce \Gamma_0(3^n) \cap \wt{C_3}$ in \ref{DT-iv} and $\Gamma \ce \Gamma_0(2^n \cdot 5)$ in \ref{DT-vi} and, in the view of the above, especially, the analogue of \eqref{X0N-sh-id} for $\sX_\Gamma$ and the insensitivity of the universal deformation ring $R$ of $(E, C)$ in \ref{DT-ii} to tame level, need to show that every cyclic subgroup $G' \subset  \Aut(E)/\{\pm 1\}$ of $p$-power order is $\Aut(z)/\{\pm 1\}$ for some  $z \in \sX_\Gamma(\ov{\bF}_p)$. For $p = 3$, the unique $G'$ of $3$-power order is $\bZ/3\bZ$ and its preimage $\wt{G}' \subset \Aut(E)$ is $\bZ/6\bZ$. Since $\bF_7$ contains sixth roots of unity, the action of $\wt{G}'$ on $E[7]$ is diagonalizable and either of the resulting $\wt{G}'$-stable $\bF_7$-lines $C' \subset E[7]$ is the $7$-primary part of a level structure that determines the desired $z$ for $\Gamma = \Gamma_0(3^n \cdot 7)$. Similarly, the faithful action of $G'$ on $E[2]$ determines a $\wt{C}_3$-structure, and so a desired $z$ for $\Gamma = \Gamma_0(3^n) \cap \wt{C}_3$. For $p = 2$, the argument is analogous: now 
$G'$ is $\bZ/2\bZ$ but is no longer unique (the $2$-Sylow of $\SL_2(\bF_3)/\{\pm 1\}$ is $\bZ/2\bZ \times \bZ/2\bZ$), its preimage $\wt{G}'$ is $\bZ/4\bZ$, and one can diagonalize the action of $\wt{G}'$ on $E[5]$ because $\bF_5$ contains fourth roots of unity.
\epf

\brem \label{kill-by-6}
By the preceding proof, if $N \neq 2^n$, then the $p$-Sylow subgroup of the exceptional automorphism group at each $\ov{\bF}_p$-point of $\sX_0(N)$ is normal and either trivial or $\bZ/p\bZ$ (the latter can occur only for $p = 2$ and $p = 3$). In particular, \Cref{rat-sing-inv} and the preceding proof show that for any proper birational $\pi \colon Z \surjects X_0(N)$ with $Z$ normal, 
the $\sO_{X_0(N)}$-module $R^1\pi_*(\sO_Z)$ is killed by $6$.
\erem

A big portion of the following partial positive answer to \Cref{rat-sing-q}  appeared in \cite{Ray91}*{Th\'{e}or\`{e}me~2}: our main improvement to \emph{loc.~cit.}~is the inclusion of the cases $\val_p(N) = 2$ for $p \le 3$.

\bthm \label{rat-sing-main}
For a prime $p$, the modular curve $(X_0(N))_{\bZ_{(p)}}$ has rational singularities whenever
\benum
\m
$p \ge 5$\uscolon or

\m
$p = 3$ and either $\val_p(N) \le 2$ or there is a prime $p' \mid N$ with $p' \equiv 2 \bmod 3$\uscolon or

\m
$p = 2$ and either $\val_p(N) \le 2$ or there is a prime $p' \mid N$ with $p' \equiv 3 \bmod 4$.
\eenum
\ethm

\bpf
Thanks to \Cref{shave-level}, it suffices to check is that $X_0(7)$, $X_0(21)$, and $X_{\Gamma_0(9)\, \cap\, \wt{C}_3}$, as well as $X_0(5)$, $X_0(10)$, $X_0(20)$, $X_0(1)$, $X_0(2)$, and $X_0(4)$ have rational singularities. We have already done this in \Cref{low-genus} (see also \Cref{X0N-genus1}).
\epf

\brem \label{reduce-to-compute}
The method would show that $X_0(N)$ has rational singularities for every $N \neq 2^n$ equal to a conductor of an elliptic curve over $\bQ$ if one knew that $X_{\Gamma_0(27)\, \cap\, \wt{C}_3}$, $X_{\Gamma_0(81)\, \cap\, \wt{C}_3}$, and $X_{\Gamma_0(243)\, \cap\, \wt{C}_3}$ (or, if one prefers, $X_0(27 \cdot 7)$, $X_0(81 \cdot 7)$, and $X_0(243 \cdot 7)$), as well as $X_0(8 \cdot 5)$, $X_0(16 \cdot 5)$, $X_0(32 \cdot 5)$, $X_0(64 \cdot 5)$, $X_0(128 \cdot 5)$, $X_0(256 \cdot 5)$, $X_0(64)$, $X_0(128)$, and $X_0(256)$ have rational singularities (for well-known conductor exponent bounds for an elliptic curve over $\bQ$, see \cite{Pap93}*{Corollaire du Th\'{e}or\`{e}me~1}). 
\erem



\bcor \label{Neron-integral}
For a normalized newform $f \in H^0(\sX_0(N)_\bQ, \omega^{\tensor 2}(-\cusps))$ \up{see \uS\uref{omega}}
and the N\'{e}ron model $\cJ_0(N)$ over $\bZ$ of the Jacobian $J_0(N)$ of $X_0(N)_\bQ$,
\[
6\cdot \omega_f \in H^0(\cJ_0(N), \Omega^1), \qxq{where $\omega_f$ is the differential associated to $f$\uscolon}
\]
if $X_0(N)$ has rational singularities, then even $\omega_f \in H^0(\cJ_0(N), \Omega^1)$.
\ecor

\bpf
The Manin conjecture for the quotient $\pi \colon J_0(N) \surjects E$ with connected $\Ker(\pi)$ determined by $f$ predicts that $\omega_f$ is the pullback of a N\'{e}ron differential $\omega_E$ of the elliptic curve $E$. By the functoriality of N\'{e}ron models, this pullback lies in $H^0(\cJ_0(N), \Omega^1)$, so, by, for instance, Cremona's \cite{ARS06}*{Theorem 5.2} that verified the Manin conjecture for small $N$, we may assume that $N \neq 2^n$. By \Cref{prop:Raynaud-review}, there is an inclusion $H^0(\cJ_0(N), \Omega^1) \hra H^0(X_0(N), \Omega)$ that is an isomorphism if and only if $X_0(N)$ has rational singularities and, by \Cref{kill-by-6}, in general its cokernel is killed by $6$. Thus, it remains to recall from  \Cref{wf-integral} that $\omega_f \in H^0(X_0(N), \Omega)$.
\epf


\section{A relation between the Manin constant and the modular degree}

Our final goal is to use the work above to establish \Cref{main-general,main-X1n}. The following basic fact is the underlying source of the relationship between the modular degree and the Manin constant.

\blem \lab{congJ-deg}
For a field $k$, a proper, smooth $k$-curve $X$ with the Jacobian $J \ce \Pic^0_{X/k}$, a $k$-surjection $\phi \colon X \surjects E$ onto an elliptic curve, a point $P \in X(k)$ with $\phi(P) = 0$, the closed immersion $i_P\colon X \hra J$ given by $Q \mapsto \sO_X(Q - P)$, and the homomorphism $\pi\colon J \surjects E$ obtained from $\phi$ by the Albanese functoriality of $J$, the composition $\pi \circ \pi^\vee \colon E \ra J \ra E$ is multiplication by $\deg \phi$. 
\elem

\bpf
The existence of $\phi$ implies that $X$ has genus $> 0$, and the map $\pi \colon J \ra E$ is characterized by $\sO_X(Q - P) \mapsto \phi(Q)$, see \cite{Mil86b}*{Proposition 6.1}. Moreover, by \cite{Mil86b}*{Lemma 6.9 and Remark~6.10~(c)}, the map $\Pic^0(i_P)$ is the \emph{negative} of the inverse of the canonical principal polarization of $J$ and the canonical principal polarization of $E$ sends a $Q \in E(\ov{k})$ to $\sO_{E_{\ov{k}}}( [0] - [Q])$ (see also \cite{Con04}*{Example 2.5}). In particular, the map $\Pic^0(\phi) = \Pic^0(i_P) \circ \pi^\vee$ sends such a $Q$ to $\sO_{X_{\ov{k}}}([\phi\i(0)] - [\phi\i(Q)])$ and, by taking into account the canonical principal polarization of $J$, we find that $\pi \circ \pi^\vee$ sends $Q$ to $\deg \phi \cdot Q$.
\epf


\bthm \label{thm:main-result-proof}
For an elliptic curve $E$ over $\bQ$ of conductor $N$,  a N\'{e}ron differential $\omega_E \in H^0(E, \Omega^1)$, the normalized newform $f$ determined by $E$, its associated $\omega_f \in H^0(X_0(N)_\bQ, \Omega^1)$, a subgroup $\Gamma_1(N) \subset \Gamma \subset \Gamma_0(N)$, and a prime $p$, if for some subgroup $\Gamma \subseteq \Gamma' \subseteq \Gamma_0(N)$ the curve $(X_{\Gamma'})_{\bZ_{(p)}}$ has rational singularities \up{see Theorem \uref{rat-sing-main}}, then every surjection
\[
\phi \colon (X_\Gamma)_\bQ \surjects E \qxq{satisfies} \val_p(c_\phi) \le \val_p(\deg(\phi)) \qxq{with} c_\phi \in \bZ  \qxq{defined by} \phi^*(\omega_E) = c_\phi \cdot \omega_f.
\]
Without the rational singularities assumption, we still have
\[
\val_p(c_\phi) \le \val_p(\deg(\phi)) + \begin{cases}
1 &\x{if $p = 2$ with $\val_2(N) \ge 3$ and there is no $p'\mid N$ with $p' \equiv 3 \bmod 4$,} \\
1 &\x{if $p = 3$ with $\val_3(N) \ge 3$ and there is no $p'\mid N$ with $p' \equiv 2 \bmod 3$,} \\
0 &\x{otherwise.}
\end{cases}
\]
\ethm


\bpf
By \Cref{wf-integral}, we have $\omega_f \in H^0(X_{\Gamma'}, \Omega)$. Thus, by \Cref{prop:Raynaud-review},
the rational singularity assumption ensures that $\omega_f \in H^0((\cJ_{\Gamma'})_{\bZ_{(p)}}, \Omega^1)$ where $\cJ_{\Gamma'}$ is the N\'{e}ron model of the Jacobian $J_{\Gamma'}$ of $(X_{\Gamma'})_\bQ$. We choose a $P \in X_{\Gamma}(\bQ)$, for instance, a rational cusp, and consider the resulting embeddings $(X_\Gamma)_\bQ \hra J_\Gamma$ and $(X_{\Gamma'})_\bQ \hra J_{\Gamma'}$. By the Albanese functoriality of the Jacobian, the map $X_{\Gamma} \ra X_{\Gamma'}$ induces a morphism $\cJ_{\Gamma} \ra \cJ_{\Gamma'}$, and we conclude by pullback that
\be \lab{wf-temp-int}
\omega_f \in H^0((\cJ_{\Gamma})_{\bZ_{(p)}}, \Omega^1)
\ee
(here we use the compatibility of the identification $H^0((X_{\Gamma'})_\bQ, \Omega^1) \cong H^0(J_{\Gamma'}, \Omega^1)$ obtained by pullback along $(X_{\Gamma'})_\bQ \hra J_{\Gamma'}$ with its counterpart obtained by Grothendieck--Serre duality as in \eqref{GS-id}, see \cite{Con00}*{Theorem B.4.1}).
By postcomposing with a translation, we may assume that $\phi(P) = 0$, and we then let $\pi \colon J_\Gamma \surjects E$ be the map that $\phi$ induces via the Albanese functoriality.  \Cref{congJ-deg} ensures that $\pi \circ \pi^\vee\colon E \hra J_\Gamma \surjects E$ is multiplication by $\deg(\phi)$, so the same holds for the induced $\cE \ra \cJ_\Gamma \ra \cE$ on N\'{e}ron models. Thus, by pullback, $\deg(\phi) \cdot \omega_E = c_\phi \cdot (\pi^\vee)^*(\omega_f)$. Since $c_\phi \in \bZ$ by \Cref{const-in-Z} and   $(\pi^\vee)^*(\omega_f) \in H^0(\cE_{\bZ_{(p)}}, \Omega^1) \cong \bZ_{(p)}\cdot\omega_E$ by \eqref{wf-temp-int},  we obtain the sought
 \[
 \val_p(c_\phi) \le \val_p(\deg(\phi)).
 \]
Without the rational singularities assumption, by \Cref{Neron-integral} and the Albanese functoriality as above, we still have $6 \cdot \omega_f \in H^0(\cJ_\Gamma, \Omega^1)$, so the same argument gives $\val_p(c_\phi) \le \val_p(\deg(\phi)) + \val_p(6)$. In particular, by also using \Cref{rat-sing-main}, we obtain the claimed last display in the statement.
\epf


Since $X_1(N)$ almost always agrees with the regular $\sX_1(N)$, we now show that the above minor hypothetical exceptions to the divisibility $c_\phi \mid \deg(\phi)$ cannot occur for parametrizations by~$X_1(N)_\bQ$.

\bcor\label{X1N-main-pf}
For an elliptic curve $E$ over $\bQ$ of conductor $N$,  a N\'{e}ron differential $\omega_E \in H^0(E, \Omega^1)$, the normalized newform $f$ determined by $E$, and its associated $\omega_f \in H^0(X_1(N)_\bQ, \Omega^1)$, every surjection
\[
\phi \colon X_1(N)_\bQ \surjects E \qxq{satisfies} c_\phi \mid \deg(\phi) \qxq{with} c_\phi \in \bZ  \qxq{defined by} \phi^*(\omega_E) = c_\phi \cdot \omega_f.
\]
\ecor

\bpf
By \Cref{thm:main-result-proof}, we have $\val_p(c_\phi) \le \val_p(\deg(\phi))$ for every prime $p \ge 5$. For the remaining $p = 2$ and $p = 3$, \Cref{thm:main-result-proof} applied with $\Gamma = \Gamma' = \Gamma_1(N)$ gives the same as soon as $X_1(N)_{\bZ_{(p)}}$ is regular. By \cite{KM85}*{Corollary 2.7.3, Theorem 5.5.1} and \cite{modular-description}*{Lemma 4.1.3, Theorem 4.4.4}, this happens whenever $p' \mid N$ for a prime $p' \ge 5$. Thus, we may assume that $N = 2^a\cdot 3^b$, in fact, by the last aspect of \Cref{thm:main-result-proof}, even that $N = 2^a$ or $N = 3^b$ (so $a \le 8$ and $b \le 5$, see \cite{Pap93}*{Corollaire du Th\'{e}or\`{e}me~1}).
For any isogeny $\psi\colon E' \ra E$, since the composition with the dual isogeny is multiplication by $\deg(\psi)$, we have $\psi^*(\omega_E) = c_\psi\cdot \omega_{E'}$ for some $c_\psi \in \bZ$ with $c_\psi \mid \deg(\psi)$. Thus, we may assume that $\phi$ does not factor through any such $\psi$.
For low conductor curves, by Cremona's \cite{ARS06}*{Theorem 5.2}, the Manin constant of such optimal parametrizations by $X_0(N)_\bQ$ is $\pm 1$. Thus, \Cref{const-in-Z} allows us to conclude the same for parametrizations by $X_1(N)_\bQ$  with $N = 2^a$ and $N = 3^b$, so that indeed $\val_p(c_\phi) \le \val_p(\deg(\phi))$.
\epf

\begin{bibdiv} \begin{biblist}
\bibselect{bibliography}
\end{biblist} \end{bibdiv}

\end{document}